\theoremstyle{definition}
\newtheorem{theorem}{Theorem}[section]
\newtheorem{corollary}[theorem]{Corollary}
\newtheorem{conjecture}[theorem]{Conjecture}
\newtheorem{lemma}[theorem]{Lemma}
\newtheorem{proposition}[theorem]{Theorem}
\newtheorem{example}[theorem]{Example}
\newtheorem{definition}[theorem]{Definition}
\newtheorem{remark}[theorem]{Remark}
\newtheorem{fact}[theorem]{Fact}
\newcommand{\Mod}{\textrm{Mod}}
\newcommand{\iso}{\cong}
\newcommand{\ptl}{\textrm{ptl}}
\newcommand{\sat}{\textrm{sat}}
\newcommand{\HC}{\textrm{HC}}
\newcommand{\CSS}{\textrm{CSS}}
\newcommand{\css}{\textrm{css}}
\def\blfootnote{\xdef\@thefnmark{}\@footnotetext}
\begin{document}
\author{Danielle Ulrich \thanks{The author was partially supported by NSF Research Grants DMS-1308546 and DMS-2154101.}
	 \\ Department of Mathematics\\University of Maryland, College Park}
	\title{Borel Complexity and the Schr\"{o}der--Bernstein Property}
	\date{\today} 
	\blfootnote{2010 \emph{Mathematics Subject Classification:} 03C55.}
	\maketitle

	\bibliographystyle{plain}

	\begin{abstract}
	We introduce a new invariant of Borel reducibility, namely the notion of thickness; this associates to every sentence $\Phi$ of $\mathcal{L}_{\omega_1 \omega}$ and to every cardinal $\lambda$, the thickness $\tau(\Phi, \lambda)$ of $\Phi$ at $\lambda$. As applications, we show that all the Friedman--Stanley jumps of torsion abelian groups are not Borel complete. We also show that under the existence of large cardinals, if $\Phi$ is a sentence of $\mathcal{L}_{\omega_1 \omega}$ with the Schr\"{o}der--Bernstein property (that is, whenever two countable models of $\Phi$ are bi-embeddable, then they are isomorphic), then $\Phi$ is not Borel complete.
	\end{abstract}
\section{Introduction}

In their seminal paper \cite{FS}, Friedman and Stanley introduced \emph{Borel complexity}, a measure of the complexity of the class of countable models of a sentence $\Phi$. Let $\mbox{Mod}(\Phi)$ be the set of all countable models of $\Phi$ with universe $\mathbb{N}$ (or any other fixed countable set). Then $\mbox{Mod}(\Phi)$ can be made into a standard Borel space in a natural way. 

\begin{definition}
Suppose $\Phi$, $\Psi$ are sentences of $\mathcal{L}_{\omega_1 \omega}$. Then say that $\Phi \leq_B \Psi$ ($\Phi$ is {\em Borel reducible} to $\Psi$) if there is a Borel-measurable function $f: \mbox{Mod}(\Phi) \to \mbox{Mod}(\Psi)$ satisfying the following: for all $M_1, M_2 \in \mbox{Mod}(\Phi)$, $M_1 \cong M_2$ if and only if $f(M_1) \cong f(M_2)$.
\end{definition}

 One way to think about this is that $f$ induces an injection from $\mbox{Mod}(\Phi)/\cong$ to $\mbox{Mod}(\Psi)/\cong$; in other words, we are comparing the \emph{Borel cardinality} of $\mbox{Mod}(\Phi)/\cong$ and $\mbox{Mod}(\Psi)/\cong$.

In \cite{FS}, Friedman and Stanley showed that there is a maximal class of sentences under $\leq_B$, namely the \emph{Borel complete sentences}. For example, the theories of groups, rings, linear orders, and trees are all Borel complete. Note that if $\Phi$ is Borel complete, then classifying the countable models of $\Phi$ is as hard as classifying arbitrary countable structures, so it is reasonable to say that there is no satisfactory classification. Using methods of descriptive set theory, Friedman and Stanley additionally proved that several sentences are not Borel complete. For example, they introduced the important Friedman--Stanley tower and showed that it is strictly increasing under Borel complexity.

In joint work with Richard Rast and Chris Laskowski \cite{URL}, we developed and applied some new machinery (closely related to the theory of pinned names \cite{Kanovei}) to prove non-reducibility results. Namely, to every sentence of $\Phi$ of $\mathcal{L}_{\omega_1 \omega}$, we associate a certain cardinal invariant $\|\Phi\|$, called its potential cardinality.



The following theorem is the foundation of our results in \cite{URL}:

\begin{theorem}
If $\Phi \leq_B \Psi$, then $\|\Phi\| \leq \|\Psi\|$.
\end{theorem}

Thus, to show $\Phi \not \leq_B \Psi$ it suffices to show $\|\Phi\| > \|\Psi\|$; this is a concrete counting problem. In \cite{URL} we give several applications, including the first example of a non-Borel complete first order theory with non-Borel isomorphism relation, and also a new proof that the Friedman--Stanley tower is strictly increasing under $\leq_B$. In more detail: for any sentence $\Phi$ of $\mathcal{L}_{\omega_1 \omega}$ and for any $\alpha < \omega_1$, we define the tower of Friedman--Stanley jumps $(J^\alpha(\Phi): \alpha < \omega_1)$. The special case where $\Phi = \mbox{Th}(\mathbb{Z}, S)$ (or any other sentence of $\mathcal{L}_{\omega_1 \omega}$ with $\aleph_0$-many countable models) gives the Friedman--Stanley tower $(\Phi_\alpha: \alpha < \omega_1)$. We can compute each $\|\Phi_\alpha\| = \beth_\alpha$. This gives a conceptually clear proof that for all $\alpha < \beta$, $\Phi_\alpha <_B \Phi_\beta$; Friedman and Stanley originally proved this using a Borel determinacy argument \cite{FS}.

In contexts where the isomorphism relation of $\Phi$ is not Borel, it is often desirable to use a coarser reducibility notion than $\leq_B$. In \cite{URL}, we introduce the ordering $\leq_{\HC}$; which is actually too coarse for our present purposes. Another choice is $\leq_{a \Delta^1_2}$, the reducibility notion of absolute $\Delta^1_2$-reducibility; this was introduced by Hjorth \cite{Hjorth2}. Roughly, $\Phi \leq_{a \Delta^1_2} \Psi$ if there is an injection from $\mbox{Mod}(\Phi)/\cong$ to $\mbox{Mod}(\Psi)/\cong$ which is induced by a $\Delta^1_2$-function $f: \mbox{Mod}(\Phi) \to \mbox{Mod}(\Psi)$, and moreover, $f$ continues to work in every forcing extension.  In Section~\ref{BCSBPlethoraSec}, we introduce the slight coarsening $\leq_{a \Delta_1}$ of $\leq_{a \Delta^1_2}$. This is most convenient for our purposes.
%

One limitation of potential cardinality is that there exist sentences $\Phi$ for which $\|\Phi\| = \infty$ and yet $\Phi$ is not Borel complete. For example, let $\mbox{TAG}_1 \in \mathcal{L}_{\omega_1 \omega}$ describe torsion abelian groups. Friedman and Stanley showed in \cite{FS} that ${\mbox{TAG}}_1$ is not Borel complete, and in fact that $\Phi_2 \not \leq_B {\mbox{TAG}}_1$. For each $1 \leq \alpha < \omega_1$, let  ${\mbox{TAG}}_{1+\alpha} = J^\alpha({\mbox{TAG}}_1)$ for each $\alpha$. We wish to generalize Friedman and Stanley's theorem \cite{FS} that $\Phi_2 \not \leq_B \mbox{TAG}_1$ to show that for all $1 \leq \alpha < \omega_1$, $\Phi_{\alpha+1} \not \leq_B \mbox{TAG}_\alpha$.

In Section~\ref{BCSBThicknessSec}, we introduce the notion of thickness to separate them. Namely, for each sentence $\Phi \in \mathcal{L}_{\omega_1 \omega}$, we define the thickness spectrum $\tau(\Phi, \lambda)$ of $\Phi$, a function from cardinals to cardinals. The precise definition of thickness is rigged so that if $\Phi \leq_{a \Delta_1} \Psi$, then for every cardinal $\lambda$, $\tau(\Phi, \lambda) \leq \tau(\Psi, \lambda)$.

As a first application of the definition of thickness, we show the following in Section~\ref{ThickCompSection}, using technical lemmas from Sections~\ref{Technical1}, ~\ref{DensityLemmas} and ~\ref{TechnicalThree}.

\begin{itemize}
	\item[(I)] For every $\alpha < \omega_1$ and for every cardinal $\lambda$, $\tau(\Phi_\alpha, \lambda) = \beth_\alpha$;
	\item[(II)] For every $1 \leq \alpha < \omega_1$ and for every regular strong limit $\lambda$, $\tau({\mbox{TAG}}_\alpha, \lambda) = \beth_\alpha(\lambda)$;
	\item[(III)] For every Borel complete $\Phi$ and for every regular strong limit $\lambda$, $\tau(\Phi, \lambda) = \beth_{\lambda^+}$.
\end{itemize}

Note that a regular strong limit cardinal is either $\aleph_0$ or inaccessible. In particular, $\tau(\Phi_\alpha, \aleph_0) = \tau({\mbox{TAG}}_\alpha, \aleph_0) = \beth_\alpha$; thus, we obtain

\begin{corollary}For all $\alpha < \omega_1$, $\Phi_{\alpha+1} \not \leq_{a \Delta_1} {\mbox{TAG}}_\alpha$, and hence the corresponding statement holds for $\leq_B$ also, as desired.
\end{corollary}

We formulate the following conjecture.

\vspace{1 mm}

\begin{conjecture}\label{MainGapConj}Suppose $\Phi \in \mathcal{L}_{\omega_1 \omega}$. Then $\Phi$ is Borel complete if and only if $\tau(\Phi, \aleph_0) = \beth_{\omega_1}$ (the maximal possible value).

\end{conjecture}

We remark that the positive solution to this conjecture would resolve several open problems. For instance, it would imply that for every sentence $\Phi$, $\Phi$ is Borel complete if and only if its jump is (only the forward direction is known), and if $\Psi$ is obtained from $\Phi$ by adding finitely many unconstrained constant symbols, then $\Phi$ is Borel complete if and only if $\Psi$ is (neither direction is known).

We present another application of the thickness machinery, namely to the Schr\"{o}der--Bernstein property. This has been investigated for uncountable models in several works, including \cite{Nurm1} and \cite{Goodrick2}, but we are just interested in the situation for countable models.

\begin{definition} Suppose $\Phi$ is a sentence of $\mathcal{L}_{\omega_1 \omega}$. Then say that $\Phi$ has the {\em Schr\"{o}der--Bernstein property} if for all countable $M, N \models \Phi$, if $M$ and $N$ are bi-embeddable then $M \cong N$ (there is some freedom with what notion of embeddability one uses).
\end{definition}

Some initial properties of the Schr\"{o}der--Bernstein property are developed in Section~\ref{BCSBSBSec}.

In Section~\ref{BCSBCountingSec}, we prove the following. $\kappa(\omega)$, the $\omega$\emph{'th Erd{\H o}s cardinal}, is the least cardinal satisfying $\kappa \rightarrow (\omega)^{<\omega}_2$; $\kappa(\omega)$ cannot be proven to exist in $ZFC$, but it is relatively low in the hierarchy of large cardinal axioms.

\begin{theorem}\label{SBThickBound1}
	Assume $\kappa(\omega)$ exists, and suppose $\Phi$ has the Schr\"{o}der--Bernstein property. Then $\tau(\Phi, \kappa(\omega)) \leq \kappa(\omega)$, so in particular ${\mbox{TAG}}_1 \not \leq_B \Phi$. 
\end{theorem}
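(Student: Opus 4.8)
The plan is to deduce Theorem~\ref{SBThickBound1} from Theorem~\ref{SBThickBound2} by taking $\alpha = 0$. Recall that the $0$-ary Schr\"{o}der--Bernstein property is, by definition, the same as the ordinary Schr\"{o}der--Bernstein property; so if $\Phi$ has the Schr\"{o}der--Bernstein property then it has the $0$-ary Schr\"{o}der--Bernstein property, and Theorem~\ref{SBThickBound2} with $\alpha = 0$ gives, for every cardinal $\lambda$,
\[
\tau(\Phi, \lambda) \leq \beth_0(\lambda^{<\kappa(\omega)}) = \lambda^{<\kappa(\omega)},
\]
since $\beth_0(\mu) = \mu$ for every $\mu$. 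This is exactly the desired thickness bound, so the only real content beyond invoking the previous theorem is the conclusion ${\mbox{TAG}}_1 \not\leq_B \Phi$.

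For that last clause I would argue as follows. By item~(II) of the results announced in the introduction (proved in Section~\ref{ThickCompSection}), for every regular strong limit cardinal $\lambda$ we have $\tau({\mbox{TAG}}_1, \lambda) = \beth_1(\lambda) = 2^\lambda$. Now choose a regular strong limit $\lambda$ large enough that $\lambda > \kappa(\omega)$; then $\lambda^{<\kappa(\omega)} = \lambda$ (as $\lambda$ is a strong limit and $\kappa(\omega) < \lambda$, every cardinal below $\kappa(\omega)$ has $\lambda^{<\kappa(\omega)} = \sup_{\mu < \kappa(\omega)} \lambda^\mu = \lambda$), so Theorem~\ref{SBThickBound1} gives $\tau(\Phi, \lambda) \leq \lambda < 2^\lambda = \tau({\mbox{TAG}}_1, \lambda)$. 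Since thickness is monotone under $\leq^*_{a\Delta^1_2}$, and hence under $\leq_B$, the strict inequality $\tau({\mbox{TAG}}_1, \lambda) > \tau(\Phi, \lambda)$ witnesses ${\mbox{TAG}}_1 \not\leq_B \Phi$.

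The main obstacle is essentially bookkeeping rather than a genuine difficulty: one must be careful that a regular strong limit cardinal $\lambda > \kappa(\omega)$ actually exists. This does not follow from $\ZFC$ alone, but it is immediate from the existence of $\kappa(\omega)$, since $\kappa(\omega)$ is itself inaccessible (being $\to (\omega)^{<\omega}_2$ forces inaccessibility and much more), so $\kappa(\omega)$ is a regular strong limit, and we may simply take $\lambda = \kappa(\omega)$ — or rather, to be safe about the strictness $\lambda > \kappa(\omega)$ in the computation $\lambda^{<\kappa(\omega)} = \lambda$, we take $\lambda$ to be any regular strong limit above $\kappa(\omega)$, of which there are a proper class below the next measurable or simply by iterating; alternatively one checks directly that $\kappa(\omega)^{<\kappa(\omega)} = \kappa(\omega)$ since $\kappa(\omega)$ is regular and strong limit, which already suffices. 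The remaining point to verify is purely formal: that the hypothesis of Theorem~\ref{SBThickBound2} is genuinely met, i.e. that "Schr\"{o}der--Bernstein property" and "$0$-ary Schr\"{o}der--Bernstein property" coincide as stated in Section~\ref{BCSBSBSec}, which we have already been told.
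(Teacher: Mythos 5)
Your proposal is correct and matches the paper's route: Theorem~\ref{SBThickBound1} is precisely the $\alpha=0$ instance of Theorem~\ref{SBThickBound2} (using $\beth_0(\mu)=\mu$ and the stated identification of the Schr\"{o}der--Bernstein property with its $0$-ary version), and the non-reduction ${\mbox{TAG}}_1 \not\leq_B \Phi$ follows by comparing thickness at $\lambda=\kappa(\omega)$, which is inaccessible, hence a regular strong limit admitting thick sets, so that $\tau({\mbox{TAG}}_1,\kappa(\omega))=2^{\kappa(\omega)}>\kappa(\omega)=\kappa(\omega)^{<\kappa(\omega)}\geq\tau(\Phi,\kappa(\omega))$, contradicting monotonicity of thickness under $\leq_B$. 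One aside in your write-up is false---the existence of $\kappa(\omega)$ does not give any regular strong limit (i.e.\ inaccessible) strictly above $\kappa(\omega)$, let alone a proper class of them---but your fallback of taking $\lambda=\kappa(\omega)$ itself and checking $\kappa(\omega)^{<\kappa(\omega)}=\kappa(\omega)$ directly is exactly what is needed, and is in effect what the paper's Corollary~\ref{Quote1} accomplishes (the forcing extension used there is only required for the general $\alpha$-ary case, not for $\alpha=0$).
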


In Section \ref{ExamplesSec}, we use this machinery and also a result from \cite{TFAGWShelah} to show the following. Let Graphs denote the theory of graphs, i.e. an sets with an irreflexive symmetric relation. Let $\mbox{CT}_\omega$ denote the class of $\omega$-colored trees, i.e. trees (of height at most $\omega$) with a coloring by $\omega$ (formally given by disjoint unary predicates) 
\begin{corollary}
  Suppose $ZFC + ``\kappa(\omega)$ exists" is consistent. Then it is independent of $ZFC$ whether there is an absolutely $\Delta^1_2$-reduction $F$ from Graphs to $\mbox{CT}_\omega$, such that $F$ takes non-isomorphic graphs to non-bi-embeddable trees.
\end{corollary}

\section{Preliminaries and notation}
In terms of variables, $\phi, \psi, \Phi, \Psi$ denote formulas, $P, Q$ denote forcing notions, $\lambda, \kappa, \mu$ denote cardinals, $n, m$ denote natural numbers, $\alpha, \beta, \gamma$ denote ordinals, $\delta$ denotes a limit ordinal, $M, N$ denote structures, $V$ denotes a transitive model of set theory, $\mathbb{V}$ denotes the universe of all sets.

Our metatheory will always be $ZFC$ (in particular, everything is a set). Frequently we will need to work in transitive models of set theory; but there are not guaranteed to be set models of $ZFC$. $ZFC^-$ is a convenient fragment of $ZFC$ for this purpose; in fact, $ZFC^-$ has several desirable properties, which we describe now.

\begin{definition}
	Let $ZFC^-$ be $ZFC$ but: remove power set, and strengthen choice to the well-ordering principle, and strengthen replacement to the collection principle (this is as in \cite{ZFCminus}). 
	
\end{definition}

\begin{example}
	If $\lambda$ is an uncountable regular cardinal, then $H(\lambda) \models ZFC^-$, where $H(\lambda)$ is the set of sets of hereditary cardinality less than $\lambda$. Thus, if $A$ is any transitive set, then there is some transitive $V \models ZFC^-$ with $|V| = |A|+\aleph_0$. 
\end{example}

We usually denote $H(\aleph_1)$ as $\HC$.

Most arguments that do not appeal explicitly appeal to powerset go through in $ZFC^-$. For instance, successor cardinals are regular. Transfinite induction works fine. Every set $X$ is in bijection with an ordinal $\alpha$; thus, it makes sense to define the cardinality of $X$ to be the least such ordinal $\alpha$.

The following lemma must be reproven for every fragment of $ZFC$ one works with. For $ZFC^-$ it is standard, although we were not able to find an exact reference.

\begin{lemma}\label{ForcingLemma1}
	Suppose $\mathbb{V} \models ZFC^-$, and suppose $P$ is a forcing notion. Then the forcing theorem holds for $P$, in other words: we have a definable forcing relation $\Vdash_P$ in $\mathbb{V}$, and if $G$ is $P$-generic over $\mathbb{V}$, then $\mathbb{V}[G] \models \phi(\dot{a}_1, \ldots, \dot{a}_n)$ if and only if there is some $p \in G$ which forces $\phi(\dot{a}_1,  \ldots \dot{a}_n)$. Also, if $G$ is $P$-generic over $\mathbb{V}$, $\mathbb{V}[G] \models ZFC^-$. 
\end{lemma}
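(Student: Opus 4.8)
The proof is the standard one for $ZFC$ (following, say, Kunen's treatment), with a modest amount of extra care to avoid power set and to use the collection scheme where a $ZFC$ argument would invoke replacement together with power set; here is the plan.

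First I would define the internal forcing relation $\Vdash^*_P$ in two stages. On atomic formulas one sets $p \Vdash^*_P \dot\sigma = \dot\tau$ and $p \Vdash^*_P \dot\sigma \in \dot\tau$ by the usual simultaneous recursion along the natural well-founded relation on pairs of $P$-names; on compound formulas one sets $p \Vdash^*_P (\phi \wedge \psi)$ iff $p \Vdash^*_P \phi$ and $p \Vdash^*_P \psi$, $p \Vdash^*_P \neg\phi$ iff no $q \leq p$ has $q \Vdash^*_P \phi$, and $p \Vdash^*_P \exists x\, \phi(x)$ iff $\{q \leq p : q \Vdash^*_P \phi(\dot\tau) \text{ for some } P\text{-name } \dot\tau\}$ is dense below $p$. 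Both recursions are instances of well-founded and transfinite recursion, which are available in $ZFC^-$, and --- this is the only point that genuinely needs checking for $ZFC^-$ --- neither recursion secretly quantifies over the power set of anything: the atomic recursion recurses only within the transitive closures of the finitely many names appearing in the formula, and the recursion on formulas is finite. Hence $\Vdash^*_P$ is definable from the parameter $P$, which is the definability clause of the lemma.

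Next I would record the standard structural facts --- $p \Vdash^*_P \phi$ and $q \leq p$ imply $q \Vdash^*_P \phi$; for every $p$ and $\phi$ the set $\{q \leq p : q \Vdash^*_P \phi \text{ or } q \Vdash^*_P \neg\phi\}$ is dense below $p$; and $p \Vdash^*_P \phi$ iff $\{q \leq p : q \Vdash^*_P \phi\}$ is dense below $p$ --- and then prove the truth lemma: if $G$ is $P$-generic over $\mathbb{V}$, then for every $\mathcal{L}$-formula $\phi$ and $P$-names $\dot a_1, \dots, \dot a_n$, $\mathbb{V}[G] \models \phi(\dot a_1^G, \dots, \dot a_n^G)$ if and only if some $p \in G$ has $p \Vdash^*_P \phi(\dot a_1, \dots, \dot a_n)$. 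This goes by induction on $\phi$: the atomic case by a secondary induction on the name pair, the $\wedge$ and $\exists$ cases from the density clauses together with genericity, and the $\neg$ case from density of the set of conditions deciding $\phi$ together with the fact that $G$ is a filter (if $p \in G$ forces $\neg\phi$ and $q \in G$ forces $\phi$, a common extension in $G$ both forces $\phi$ and extends $p$, contradicting the definition of $p \Vdash^*_P \neg\phi$). Taking $\Vdash_P := \Vdash^*_P$, this is precisely the forcing theorem as stated.

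Finally I would verify $\mathbb{V}[G] \models ZFC^-$. Extensionality, foundation, pairing, union, and infinity go through verbatim. Separation follows from definability of $\Vdash^*_P$: given $\phi$ and a name $\dot\tau$, the set $\{x \in \dot\tau^G : \mathbb{V}[G] \models \phi(x, \dot a^G)\}$ is named by $\{\langle \dot\sigma, p\rangle : \dot\sigma \in \dom(\dot\tau),\ p \Vdash^*_P (\dot\sigma \in \dot\tau \wedge \phi(\dot\sigma, \dot a))\}$. For the well-ordering principle: every element of $\mathbb{V}[G]$ is a subset of $\{\dot\sigma^G : \dot\sigma \in X\}$ for some set $X \in \mathbb{V}$, so well-ordering $X$ in $\mathbb{V}$ yields, in $\mathbb{V}[G]$, a surjection from an ordinal onto a superset, and choosing least preimages (using the separation just established) well-orders the set. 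The one genuine deviation from the $ZFC$ proof is collection: to verify it for $\phi(x,y)$ over a set $\dot a^G \in \mathbb{V}[G]$, work in $\mathbb{V}$, where by definability of $\Vdash^*_P$ the relation ``$p \Vdash^*_P \phi(\dot\sigma, \dot\tau, \dot{\vec c})$'' is a definable class relation on the set $\dom(\dot a) \times P$; the collection scheme of $\mathbb{V}$ then yields a set $B$ of names such that whenever $\dot\sigma \in \dom(\dot a)$, $p \in P$, and some name is forced by $p$ to witness $\phi(\dot\sigma, \cdot, \dot{\vec c})$, such a name already lies in $B$, and $\dot b = \{\langle \dot\tau, p\rangle : \dot\tau \in B,\ p \in P\}$ then satisfies (via the truth lemma) that $\dot b^G$ is a collecting set in $\mathbb{V}[G]$. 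A $ZFC$ proof would instead bound the ranks of the needed names via replacement and power set; we use collection directly and do not --- and need not --- verify power set. This last step is also where I expect the only real subtlety: obtaining collection in $\mathbb{V}[G]$ is exactly where building $ZFC^-$ from the collection scheme rather than the weaker replacement scheme is essential, and it is the part that most deserves careful writing; see \cite{ZFCminus} for the relevant analysis of $ZFC^-$.
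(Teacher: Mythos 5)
Your proposal is correct and follows essentially the same route as the paper: the paper outsources the definability of $\Vdash_P$ and the truth lemma to a citation (Theorem 1.1 of the class-forcing reference) where you sketch the standard recursive construction, and the verifications of separation, collection, and the well-ordering principle in $\mathbb{V}[G]$ match the paper's almost exactly (a name carved out by the forcing relation, an application of ground-model collection to the definable relation ``$p$ forces $\phi(\dot\sigma,\dot\tau)$'', and a ground-model well-ordering of a set of names, respectively). Your emphasis that collection in $\mathbb{V}$ — rather than replacement plus power set — is the load-bearing ingredient is exactly the right point to highlight.
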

\begin{proof}
First of all, note that we can define the forcing relation $\Vdash_P$ via the usual clauses (using that $P$ is a set). Theorem 1.1 of \cite{ClassForcing2} implies that if $G$ is $P$-generic over $\mathbb{V}$, then $\mathbb{V}[G] \models \phi(\dot{a}_1, \ldots, \dot{a}_n)$ if and only if there is some $p \in G$ which forces $\phi(\dot{a}_1, \ldots, \dot{a}_n)$. 

So it remains to check that $P \Vdash ZFC^-$. We show $P$ forces separation, collection and well-ordering; the other axioms are also straightforward to check.

Separation: suppose $\dot{A}$, $\dot{a}_0, \ldots, \dot{a}_{n-1}$ are $P$-names, and $\phi(x, z_0, \ldots, z_{n-1})$ is a formula of set theory. Let $\dot{B} \in \mathbb{V}$ be the $P$-name consisting of all pairs $(p, \dot{b})$, such that there is some $q \in P$ with $p \leq q$ and $(q, \dot{b}) \in \dot{A}$, and such that $p \Vdash \phi(\dot{a}, \dot{a}_0, \ldots, \dot{a}_{n-1})$. Clearly $P \Vdash \dot{B} = \{b \in \dot{A}: \phi(b, \dot{a}_0, \ldots, \dot{a}_{n-1})\}$.

Collection: suppose $\dot{A}$, $\dot{a}_0, \ldots, \dot{a}_{n-1}$ are $P$-names, and $\phi(x, y, z_0, \ldots, z_{n-1})$ is a formula of set theory. By collection in $\mathbb{V}$, we can find some $P$-name $\dot{B}$ such that for every $(p, \dot{a}) \in \dot{A}$ and for every $q \leq p$, if there is some $P$-name $\dot{b}$ such that $q \Vdash \phi(\dot{a}, \dot{b}, \dot{a}_0, \ldots, \dot{a}_{n-1})$, then for some such $\dot{b}$ we have $(q, \dot{b}) \in \dot{B}$. Clearly $P$ forces this works.

Well-ordering: suppose $\dot{A}$ is a $P$-name. Let $f: \dot{A} \to \alpha$ be an injection for some ordinal $\alpha$, and let $\dot{R}$ be the $P$-name $\{(p, (\dot{b}, \alpha)): (p, \dot{b}) \in \dot{A} \mbox{ and } \alpha = f(p, \dot{b})\}$. Clearly $P$ forces that $\dot{R} \subseteq \dot{A} \times \alpha$, and the cross-sections corresponding to distinct elements of $\dot{A}$ are disjoint. Hence $P \Vdash \dot{A}$ is well-orderable.
\end{proof}

 We will use repeatedly a pair of closely related absoluteness results. The first is Shoenfield's Absoluteness Theorem \cite{Shoenfield}, see e.g. Theorem 13.15 of \cite{Jech} (the theorem there is just stated for $ZF$, but the same proof extends to $ZFC^-$.
 
 \begin{theorem}
 	Suppose $X \subseteq \mathbb{R}$ is $\Sigma^1_2$ (or $\Pi^1_2$). Then $X$ is absolute to transitive models $V \models ZFC^-$ with $\omega_1 \subseteq V$ (and hence also to forcing extensions).
 \end{theorem}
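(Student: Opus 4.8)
The plan is to carry out the classical Shoenfield tree argument, checking that the textbook proof (normally stated over $ZF$) never uses more than $ZFC^-$. First reduce to the case of a $\Sigma^1_2$ set $X$, where we take any real parameter in its definition to lie in $V$ (otherwise $X$ has no interpretation in $V$); the $\Pi^1_2$ case then follows by complementation, applying the $\Sigma^1_2$ case to $\mathbb{R}\setminus X$. Write $X=\{x:\exists y\,\forall z\,\varphi(x,y,z)\}$ with $\varphi$ arithmetic, so that $R(x,y):\equiv\forall z\,\varphi(x,y,z)$ is $\Pi^1_1$. The standard normal form produces a tree $W_{x,y}$ on $\omega$, recursive uniformly in $(x,y)$, such that $R(x,y)$ holds iff $W_{x,y}$ is well-founded; and since $W_{x,y}$ has only countably many nodes, $W_{x,y}$ is well-founded iff it admits an order-reversing labelling by ordinals below $\omega_1$.

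Next, form the Shoenfield tree $T$ on $\omega\times\omega_1$ whose branches through a section $T_x$ code a pair $(y,h)$ with $h$ an order-reversing labelling of $W_{x,y}$ by countable ordinals; by construction, $ZFC^-$ proves that for every real $x$, $x\in X$ iff $T_x$ is ill-founded. Now fix a real $x\in V$. Two observations finish the proof. (i) The defining formulas of $T$ and $T_x$ are arithmetic (in the parameter of $X$) and mention only ordinals below $\omega_1$; since $\omega_1\subseteq V$ and $V\models ZFC^-$, the tree $T$ is a set of $V$ — obtained by separation inside $\omega^{<\omega}\times\omega_1^{<\omega}$, using no power set — and is computed identically in $V$ and in $\mathbb{V}$, so $T_x$ is literally the same set tree in both. (ii) Over $ZFC^-$, a set tree is well-founded iff it carries a rank function into the ordinals, the nontrivial direction being transfinite recursion along the tree. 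Hence if $T_x$ is ill-founded in $V$ then a branch of it in $V$ is still a branch in $\mathbb{V}$; and if $T_x$ is well-founded in $V$ then a rank function $\rho\colon T_x\to\textrm{Ord}$ lying in $V$ still witnesses well-foundedness in $\mathbb{V}$. So ``$T_x$ is ill-founded'' is absolute between $V$ and $\mathbb{V}$, whence so is ``$x\in X$''.

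For forcing extensions, apply this with $\mathbb{V}$ in the role of $V$ and $\mathbb{V}[G]$ as the ambient universe: $\mathbb{V}[G]\models ZFC^-$ by Lemma~\ref{ForcingLemma1}, and set forcing adds no ordinals, so every ordinal that is countable in $\mathbb{V}[G]$ already lies in $\mathbb{V}$ — which is exactly the hypothesis ``$\omega_1\subseteq\mathbb{V}$'' needed. The step requiring the most care is (i): one must ensure the $\omega_1$ used to build $T$ is genuinely available as a set in $V$, which can fail only in the degenerate case $\textrm{Ord}^V=\omega_1$ (for instance $V=\HC$), where the tree on $\omega\times\omega_1$ is a proper class of $V$; there one argues instead that for each fixed $W_{x,y}$ a rank function is a countable set and hence already lies in $V$, so that well-foundedness of each $W_{x,y}$, and thus membership in $X$, is absolute. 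Verifying this, together with the fact that the $\Pi^1_1$ normal form and the rank-function characterization avoid power set, constitutes the entire content of adapting the $ZF$ proof of \cite{Jech} to $ZFC^-$.
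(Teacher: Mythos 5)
The paper does not prove this theorem; it cites it and asserts that Jech's $ZF$ proof adapts to $ZFC^-$, which is exactly what you set out to verify. Your main case --- $\omega_1^{\mathbb{V}}\in V$, so the Shoenfield tree $T$ on $\omega\times\omega_1^{\mathbb{V}}$ is a set of $V$ and ill-foundedness of $T_x$ transfers via branches upward and rank functions downward --- is the standard argument and is essentially correct (one should be slightly careful that $V$ may compute its own $\omega_1^V<\omega_1^{\mathbb{V}}$, so the tree to use is the one built from the \emph{ordinal parameter} $\omega_1^{\mathbb{V}}$; the equivalence ``$x\in X$ iff $T_x$ ill-founded'' still holds in $V$ for that tree since ranks of $V$-countable well-founded trees are below $\omega_1^V\le\omega_1^{\mathbb{V}}$). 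The forcing-extension corollary is also fine.

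The genuine gap is in your treatment of the degenerate case $\mathrm{Ord}^V=\omega_1$. First, the principle you invoke --- a countable subset of $V$ ``hence already lies in $V$'' --- is false in general for such $V$ (it holds for $\HC$, but e.g.\ $V=L_{\omega_1}$ in a universe where $\omega_1^L=\omega_1$ and $\mathbb{R}^L\neq\mathbb{R}$ is a transitive model of $ZFC^-$ with $\omega_1\subseteq V$ that is not closed under countable subsets); the correct justification for absoluteness of well-foundedness of the set trees $W_{x,y}$ is that $V$ constructs the rank function by well-founded recursion, which needs no closure hypothesis. Second, and more seriously, even granting absoluteness of the $\Pi^1_1$ matrix $R(x,y)$ for every $y\in V$, the inference ``and thus membership in $X$ is absolute'' is a non sequitur: it yields only the upward direction $V\models x\in X\Rightarrow x\in X$. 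The downward direction --- producing a witness $y\in V$ from a witness in $\mathbb{V}$ --- is the entire content of Shoenfield's theorem, and in this case your argument says nothing about it (in the $L_{\omega_1}$ example the true witness may be a real outside $V$). The repair is to note that although $T$ is a proper class of $V$, any branch of $T_x$ in $\mathbb{V}$ consists of a real together with countably many countable ordinals, hence already lies in the set tree $T_x\restriction(\omega\times\alpha)\in V$ for some $\alpha<\omega_1$; one then runs your rank-function dichotomy on that set tree inside $V$ to extract a branch, and hence a witness $y\in V$ with $V\models R(x,y)$.
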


The second is the L{\'e}vy Absoluteness Principle, which has various forms (e.g., Theorem~9.1 of \cite{barwise2} or Section~4 of \cite{LevyAbs}); we give a version more convenient to us. For a proof, see \cite{URL} (it is also standard). We need the following standard definition.

\begin{definition}
Suppose $\phi(\overline{x})$ is a formula of set theory with parameters from $\HC$. Say that $\phi(\overline{x})$ is $\Delta_0$ if it uses only bounded quantifiers $\forall x \in y, \exists x \in y$. Say that $\phi(\overline{x})$ is $\Sigma_1$ if it is of the form $\exists \overline{y} \psi(\overline{x}, \overline{y})$ for some $\Delta_0$ $\psi(\overline{x}, \overline{y})$, and say that it is $\Pi_1$ if it is of the form $\forall \overline{y} \psi(\overline{x}, \overline{y})$ for some $\Delta_0$ $\psi(\overline{x}, \overline{y})$. Since set theory has pairing functions we will only ever use the case where $\overline{x}, \overline{y}$ are single variables.
\end{definition}

 \begin{theorem}  \label{sigma1} If $\mathbb{V}[G]$ is any forcing extension, and if $\phi(x)$ is a $\Sigma_1$ formula of set theory, then for every $a\in\HC$, $\HC \models \phi(a)$
 	if and only if $\HC^{\mathbb{V}[G]} \models \phi(a)$. 
 \end{theorem}

We now shift gears and review Borel reducibility. 

First, suppose $X$ and $Y$ are Polish spaces, and $E$ and $F$ are equivalence relations on $X$ and $Y$. Then say that $(X, E)$ is Borel reducible to $(Y, F)$, and write $(X, E) \leq_B (Y, F)$ if there is a Borel-measurable map $f: X \to Y$ such that $f$ induces an injection from $X/E$ to $Y/E$. Borel-measurability means that the inverse image of an open (Borel) set is Borel; this is equivalent to the graph of $f$ being Borel.

We will be interested in a special case of this set-up. Suppose $\mathcal{L}$ be a countable langauge and let $X_\mathcal{L}$ be the set of $\mathcal{L}$-structures with universe $\omega$. Endow $X_\mathcal{L}$ with the usual logic topology (with clopen sets being solution sets of first-order formulas); then $X_\mathcal{L}$ becomes a Polish space. Moreover, if $\Phi$ is a sentence of $\mathcal{L}_{\omega_1 \omega}$ then $\Mod(\Phi)$ is a Borel subset of $X_\mathcal{L}$; hence $\Mod(\Phi)$ is a standard Borel space. 
The relation $\iso_\Phi$ is the restriction of the isomorphism relation to $\Mod(\Phi)\times\Mod(\Phi)$.  When no ambiguity arises we will just write $\iso$.

If $\mathcal{L}'$ is another countable language and $\Phi'$ is a a sentence of $\mathcal{L}'_{\omega_1 \omega}$, then put $\Phi \leq_B \Phi'$ if $(\mbox{Mod}(\Phi), \cong) \leq_B \mbox{Mod}(\Psi), \cong)$.

The definition below is  in both Barwise~\cite{barwise2}
and Marker~\cite{MarkerMT}. 


\begin{definition} \label{cssDefinition} Suppose $\mathcal{L}$ is countable and $M$ is any  infinite $\mathcal{L}$-structure, say of power $\kappa$.
	For each $\alpha<\kappa^+$, define an $\mathcal{L}_{\kappa^+,\omega}$ formula $\phi_\alpha^{\overline{a}}({\overline{x}})$ for each finite ${\overline{a}}\in M^{<\omega}$ as follows:
	\begin{itemize}
		\item  $\phi_0^{\overline{a}}({\overline{x}}):=\bigwedge \{\theta({\overline{x}}): \theta$ atomic or negated atomic and $M\models\theta({\overline{a}})\}$;
		\item  $\phi_{\alpha+1}^{\overline{a}}({\overline{x}}):=\phi_\alpha^{\overline{a}}({\overline{x}})\ \wedge\  \bigwedge \left\{ \exists y\, \phi_\alpha^{{\overline{a}}, b}({\overline{x}},y):b\in M\right\}\ \wedge\forall y\bigvee \left\{\phi_\alpha^{{\overline{a}}, b}({\overline{x}},y):b\in M\right\}$;
		\item  For $\alpha$ a non-zero limit, $\phi_\alpha^{\overline{a}}({\overline{x}}):=\bigwedge \left\{\phi_\beta^{\overline{a}}({\overline{x}}):\beta<\alpha\right\}$.
	\end{itemize}
	Next, let $\alpha^*(M)<\kappa^+$ be least  ordinal $\alpha$ such that 
	for all finite ${\overline{a}}$ from $M$, 
	\[\forall {\overline{x}} [\phi_\alpha^{\overline{a}}({\overline{x}})\rightarrow\phi_{\alpha+1}^{{\overline{a}}}({\overline{x}})].\]

	Finally, put 
	$\css(M):=\phi^{\emptyset}_{\alpha^*(M)} \wedge \bigwedge \left\{ \forall\, {\overline{x}} [\phi^{{\overline{a}}}_{\alpha^*(M)}({\overline{x}})\rightarrow \phi^{{\overline{a}}}_{\alpha^*(M)+1}({\overline{x}})]:{\overline{a}}\in M^{<\omega}\right\}$. We call this the {\em canonical Scott sentence} of $M$.
\end{definition}

We summarize the well-known, classical facts about canonical Scott sentences with the following:

\begin{fact}  \label{summ}  Fix a countable language $\mathcal{L}$.
	\begin{enumerate}
		\item  For every $\mathcal{L}$-structure $M$, $M\models \css(M)$; and for all 
		$\mathcal{L}$-structures $N$, $M\equiv_{\infty,\omega} N$ if and only if $\css(M)=\css(N)$ if and only if $N\models \css(M)$.
		\item  If $M$ is countable, then $\css(M)\in \HC$.
		\item  The map $\css$ is absolute to transitive models of $ZFC^-$. 
		\item If $M$ and $N$ are both countable, then $M\iso N$ if and only if $\css(M)=\css(N)$ if and only if $N\models \css(M)$.
	\end{enumerate}
\end{fact}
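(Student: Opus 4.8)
These four items are the classical basic properties of canonical Scott sentences, and the plan is to prove them essentially as in Barwise \cite{barwise2} or Marker \cite{MarkerMT}, organizing everything around a single back-and-forth system.

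First I would record that $\alpha^*(M)$ is well defined. For $M$ of power $\kappa$, the formulas $(\phi_\alpha^{\overline{a}})_{\overline{a}\in M^{<\omega}}$ partition $M^{<\omega}$ via $\overline{a}\equiv_\alpha\overline{b} :\Leftrightarrow \phi_\alpha^{\overline{a}}=\phi_\alpha^{\overline{b}}$, and one checks by induction (using the syntactic form of the successor clause) that $M\models\phi_\alpha^{\overline{a}}(\overline{c})$ iff $\overline{c}\equiv_\alpha\overline{a}$ and that these partitions refine as $\alpha$ grows. For each fixed $\overline{a}$ the set $\{\overline{b}:\overline{b}\equiv_\alpha\overline{a}\}$ is $\subseteq$-decreasing in $\alpha$, hence eventually constant, at some stage $\gamma(\overline{a})<\kappa^+$ (a $\subseteq$-decreasing transfinite sequence of subsets of a $\kappa$-sized set stabilizes below $\kappa^+$); and for $\alpha\geq\sup_{\overline{a}}\gamma(\overline{a})$ the defining inequality $\forall\overline{x}[\phi_\alpha^{\overline{a}}(\overline{x})\to\phi_{\alpha+1}^{\overline{a}}(\overline{x})]$ holds in $M$ for every $\overline{a}$, so $\alpha^*(M)<\kappa^+$ exists. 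When $M$ is countable this gives $\alpha^*(M)<\omega_1$ and each $\phi_\alpha^{\overline{a}}$ is an $\mathcal{L}_{\omega_1\omega}$-formula built from a hereditarily countable amount of data, so $\css(M)\in\HC$, which is item (2). Item (3) I would obtain the same way: the whole recursion ($\phi_\alpha^{\overline{a}}$, the refinement sequence, the ordinal $\alpha^*(M)$, and finally $\css(M)$) uses only $M$, finite tuples from $M$, and subsets of finite powers of $M$ — all objects of $H(|M|^+)$ — so it is computed identically in any transitive model of $ZFC^-$ containing $M$.

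For item (1) I would first observe, by induction on $\alpha$, that $M\models\phi_\alpha^{\overline{a}}(\overline{a})$ for all $\overline{a},\alpha$; together with the definition of $\alpha^*(M)$ this yields $M\models\css(M)$. Then, given $N$ with $N\models\css(M)$, I would show that
\[ I:=\{(\overline{a},\overline{b}):|\overline{a}|=|\overline{b}|,\ N\models\phi_{\alpha^*(M)}^{\overline{a}}(\overline{b})\} \]
is a back-and-forth system between $M$ and $N$: it contains $(\emptyset,\emptyset)$ since $N\models\phi_{\alpha^*(M)}^{\emptyset}$, each of its pairs is a partial isomorphism because $\phi_{\alpha^*(M)}^{\overline{a}}$ logically implies $\phi_0^{\overline{a}}$, which pins down the atomic diagram of $\overline{a}$, and the forth and back extension properties fall out of the successor clause of the definition once we use that $N\models\css(M)$ upgrades $N\models\phi_{\alpha^*(M)}^{\overline{a}}(\overline{b})$ to $N\models\phi_{\alpha^*(M)+1}^{\overline{a}}(\overline{b})$. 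Conversely, given a back-and-forth system $I$ witnessing $M\equiv_{\infty,\omega}N$, an induction on $\alpha$ shows that $(\overline{a},\overline{b})\in I$ implies $\phi_\alpha^{M,\overline{a}}=\phi_\alpha^{N,\overline{b}}$; since every finite tuple of either structure occurs in a pair of $I$, this forces $\alpha^*(M)=\alpha^*(N)$ and hence $\css(M)=\css(N)$. Finally $\css(M)=\css(N)$ trivially implies $N\models\css(M)$ via $N\models\css(N)$, closing the cycle of equivalences. Item (4) then follows from item (1) together with the standard fact that a back-and-forth system between two countable structures can be zig-zagged into an isomorphism by enumerating both universes and alternately invoking the forth and back clauses.

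The step I expect to be the main obstacle is the implication $M\equiv_{\infty,\omega}N\Rightarrow\css(M)=\css(N)$. Because each $\phi_\alpha^{\overline{a}}$ is defined with explicit reference to the elements of the ambient structure, I have to check carefully that the sets of formulas $\{\phi_\alpha^{\overline{a},b}:b\in M\}$ occurring in the successor clause are genuinely transported along the back-and-forth correspondence, and — the subtler point — that matching these formula-sets across all finite tuples forces the two stabilization ordinals $\alpha^*(M)$ and $\alpha^*(N)$ to coincide. Once that is in hand, the remaining verifications are routine bookkeeping.
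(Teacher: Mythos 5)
The paper offers no proof of this statement: it is recorded as a classical ``Fact,'' with the definition of $\css$ taken from Barwise and Marker, and those references contain exactly the argument you outline. Your proposal is correct and is the standard proof --- stabilization of the refining equivalence relations $\equiv_\alpha$ on $M^{<\omega}$ below $\kappa^+$, the back-and-forth system $I=\{(\overline{a},\overline{b}): N\models\phi_{\alpha^*(M)}^{\overline{a}}(\overline{b})\}$ for the forward direction, and the syntactic transport of the level-$\alpha$ formulas (including the point you rightly flag, that the refinement maps $\phi_{\alpha+1}^{\overline{a}}\mapsto\phi_\alpha^{\overline{a}}$ are syntactically determined, forcing $\alpha^*(M)=\alpha^*(N)$) for the converse --- so there is nothing to add.
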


We make the following definition. Note that $\mbox{CSS}(\Phi)$ is always in natural bijection with $\mbox{Mod}(\Phi)/\cong$.

\begin{definition}  For $\Phi$ any sentence of $\mathcal{L}_{\omega_1,\omega}$, let $\CSS(\Phi) :=\{\css(M):M\in\Mod(\Phi)\}$, a subset of $\HC$.
\end{definition}

We will also be using Karp's completeness theorem: see for instance Theorem 3 of \cite{KeislerBook}.

\begin{theorem}\label{Karp}
	Suppose $\phi$ is a sentence of $\mathcal{L}_{\omega_1 \omega}$, $V \models ZFC^-$ is transitive and $\phi \in (\HC)^V$. Then $\phi$ is satisfiable if and only if $\phi$ has a model in $(\HC)^V$.
\end{theorem}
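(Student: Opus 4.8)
\textbf{Proof proposal for Theorem~\ref{Karp} (Karp's completeness theorem).}

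The plan is to prove the nontrivial direction: if $\phi \in (\HC)^V$ is satisfiable, then $\phi$ has a model in $(\HC)^V$. (The converse is immediate, since a model in $(\HC)^V$ is in particular a model.) The key point is that satisfiability of $\phi$ is witnessed by a \emph{consistency property} (in the sense of Makkai/Keisler), and that a consistency property living inside $(\HC)^V$ produces a countable model inside $(\HC)^V$ via a Henkin-style construction carried out internally in $V$.

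First I would recall the syntactic apparatus: working in $V$, expand the countable language $\mathcal{L}$ to $\mathcal{L}^+$ by adding countably many new constant symbols (Henkin witnesses); since $\mathcal{L}$ is countable and $V \models ZFC^-$, the set of $\mathcal{L}^+_{\omega_1\omega}$-formulas with parameters among the new constants lies in $(\HC)^V$. Next I would show that ``$\phi$ is satisfiable'' implies there is a nonempty consistency property $S$ — a set of countable fragments/finite sets of $\mathcal{L}^+_{\omega_1\omega}$-sentences closed under the usual consistency-property rules (for $\wedge$, $\bigwedge$, $\neg$, $\exists$, $\bigvee$, $\forall$, plus the Henkin rule) — with $\{\phi\} \in S$. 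The natural choice is to take an actual model $M \models \phi$ (which exists by hypothesis, though possibly in some forcing extension or large ambient universe), interpret the Henkin constants so as to name all elements of a countable elementary-like substructure (closing under existential witnesses through the fragment generated by $\phi$), and let $S$ be the set of all finite sets of sentences true in that expansion. The crucial observation is that the fragment $\mathcal{L}_A$ generated by $\phi$ is countable and lies in $(\HC)^V$, so this $S$ can be taken to be an element of $(\HC)^V$ — or, more carefully, I would argue that the \emph{existence} of such an $S \in (\HC)^V$ follows by a $\Sigma_1$-absoluteness argument: ``there exists a consistency property containing $\{\phi\}$'' is a $\Sigma_1$ statement over $\HC$ with parameter $\phi$, so by Lemma~\ref{sigma1} (L\'evy absoluteness), if it holds in any forcing extension or outer model, it holds in $(\HC)^V$ itself, hence is witnessed by some $S \in (\HC)^V$.

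Then I would run the Henkin/model-existence construction \emph{inside} $V$: given $S \in (\HC)^V$ a countable consistency property with $\{\phi\} \in S$, enumerate (in $V$) the countably many $\mathcal{L}^+_{\omega_1\omega}$-sentences in the relevant fragment and, using the closure rules of $S$, build an increasing chain $\{\phi\} = s_0 \subseteq s_1 \subseteq \cdots$ in $S$ deciding each sentence and providing a Henkin witness for each existential/disjunction; let $T = \bigcup_n s_n$. This is a routine transfinite-free recursion of length $\omega$, entirely available in $ZFC^-$. The resulting Henkin theory $T$ yields the canonical term model $M_T$ on the set of (equivalence classes of) new constants, which is a countable $\mathcal{L}$-structure; a standard induction on the complexity of $\mathcal{L}_{\omega_1\omega}$-formulas (again fine in $ZFC^-$ — it is just an induction along the well-founded formula-rank) shows $M_T \models \psi$ for every $\psi \in T$, in particular $M_T \models \phi$. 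Since $M_T$ is countable and constructed as a set in $V$, $M_T \in (\HC)^V$, completing the proof.

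The main obstacle — and the step I would be most careful about — is the absoluteness move: pinning down that the existence of a suitable consistency property is genuinely $\Sigma_1$ over $\HC$ in the parameter $\phi$, so that Lemma~\ref{sigma1} applies. One must check that the quantifier ``there is a consistency property $S$'' ranges over objects in $\HC$ (a consistency property for the fragment generated by a single $\HC$-sentence $\phi$ can be taken countable, hence in $\HC$) and that the defining closure conditions are $\Delta_0$ (or at worst $\Sigma_1$) in $S$ and $\phi$ — this is where the bookkeeping about fragments being countable and closed in $\HC$ matters. An alternative to absoluteness, if one prefers to avoid it, is to argue directly: from a model $M \models \phi$ in the (possibly larger) ambient universe, the set $S$ of finite sets of fragment-sentences satisfied by $M$ (after adjoining names for a countable ``Scott-closed'' substructure) is already an element of $\HC^{\mathbb{V}[G]}$ for a suitable forcing collapsing $|M|$ to $\omega$, and then one applies $\Sigma_1$-absoluteness to ``there exists a consistency property.'' Either way, once $S \in (\HC)^V$ is secured, the remainder is the classical model-existence theorem run internally and presents no real difficulty.
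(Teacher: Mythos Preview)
The paper does not give its own proof here; it quotes this as Karp's completeness theorem with a reference to Keisler, so there is no paper argument to compare against. Your outline via consistency properties and the model existence theorem is a reasonable reconstruction of what that reference contains, and the Henkin construction you run inside $V$ is unproblematic in $ZFC^-$. The gap is in the absoluteness step. You assert that ``there exists a consistency property containing $\{\phi\}$'' is $\Sigma_1$ over $\HC$ and then invoke Lemma~\ref{sigma1} to push it down into $(\HC)^V$. But Lemma~\ref{sigma1} concerns absoluteness between $\HC$ and $\HC^{\mathbb{V}[G]}$ for \emph{forcing extensions of the ambient universe}; it does not let you reflect a $\Sigma_1$ statement from $\HC$ (or from any outer model) down to $(\HC)^V$ for an arbitrary transitive $V\models ZFC^-$. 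In general $\Sigma_1$ is only upward absolute between transitive $\in$-models, and since nothing guarantees $\omega_1^{\mathbb{V}}\subseteq V$, neither L\'evy nor Shoenfield absoluteness applies as stated. Your ``alternative'' at the end has exactly the same defect: it manufactures a consistency property in $\HC^{\mathbb{V}[G]}$, hence (by Lemma~\ref{sigma1}) in $\HC$, but never bridges the remaining gap from $\HC$ down to $(\HC)^V$.

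Two repairs are available. Closest to your approach: observe that the existence of a consistency property for $\phi$ is in fact $\Sigma^1_1$ in any real code $r\in V$ for $\phi$ (the countable fragment generated by $\phi$ is arithmetical in $r$, and the consistency-property axioms are arithmetical in codes for $S$ and the fragment), so Mostowski absoluteness for $\Sigma^1_1$ --- which needs no hypothesis on $\omega_1$ --- reflects it down to $V$. Alternatively, and this is why the paper calls the result a \emph{completeness} theorem: use the proof calculus. ``There is a (countable) derivation of $\bot$ from $\phi$'' is $\Sigma_1$ over $\HC$, so ``$\phi$ is consistent'' is $\Pi_1$ and is genuinely downward absolute to $(\HC)^V$; then run your model-existence construction inside $V$ starting from consistency rather than from an externally supplied $S$.
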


As an example, suppose $\Phi$ is a sentence of $\mathcal{L}_{\omega_1 \omega}$, and $V \models ZFC^-$ is transitive. Then $\CSS(\Phi) \cap (\HC)^V = \{\css(M): M \in \mbox{Mod}(\Phi)^V\}$, using that $(\HC)^V \models ZFC^-$.

\section{A New Reducibility Notion}\label{BCSBPlethoraSec}
 The following coarsening of $\leq_B$ was introduced by Hjorth in \cite{Hjorth2}.
\begin{definition}
	Suppose $\Phi, \Psi$ are sentences of $\mathcal{L}_{\omega_1 \omega}$. Say that $\Phi \leq_{a \Delta^1_2} \Psi$ ($a$ stands for {\em absolutely}) if there is some function $f: \mbox{Mod}(\Phi) \to \mbox{Mod}(\Psi)$ with $\Delta^1_2$ graph, such that for all $M, N \in \mbox{Mod}(\Phi)$, $M \cong N$ if and only if $f(M) \cong f(N)$, and such that further, this continues to hold in any forcing extension. Explicitly, we require that $f$ has a $\Pi^1_2$-definition $\sigma(x, y)$, and a $\Sigma^1_2$-definition $\tau(x, y)$, such that if $\mathbb{V}[G]$ is any forcing extension, then $\sigma(x, y)$ and $\tau(x, y)$ coincide on $\mbox{Mod}(\Phi)^{\mathbb{V}[G]} \times \mbox{Mod}(\Psi)^{\mathbb{V}[G]}$ and define the graph of a function $f^{\mathbb{V}[G]}$, such that for all $M, N \in \mbox{Mod}(\Phi)^{\mathbb{V}[G]}$, $M \cong N$ if and only if $f^{\mathbb{V}[G]}(M) \cong f^{\mathbb{V}[G]}(N)$. 
\end{definition}

As discussed in the last section, the class of countable models of a theory up to isomorphism is in canonical bijection with the class of canonical Scott sentences. For our purposes it is much more convenient to work directly with canonical Scott sentences, rather than the equivalence relation of isomorphism on models. We thus formulate a coarsening of $\leq_{a \Delta^1_2}$. Since we are interested in proving nonreducibility results, this is no loss of generality.

\begin{definition}

If $\phi(x)$ is a formula of set theory with parameters from $\HC$ and $V$ is a transitive set containing the parameters for $\phi$, then let $\phi(V)$ denote $\{a \in V: V \models \phi(a)\}$.

    Say that $A \subseteq \HC$ is {\em absolutely} $\Delta_1$ (or $a \Delta_1$) if there are formulas of set theory $\phi(x), \psi(x)$, possibly with parameters from $\HC$, such that $\phi(x)$ is $\Sigma_1$, $\psi(x)$ is $\Pi_1$, and $\phi(\HC) = \psi(\HC) = A$, and in every forcing extension $\mathbb{V}[G]$, $\phi(\HC^{\mathbb{V}[G]}) = \psi(\HC^{\mathbb{V}[G]})$.
\end{definition}

We first show that the choice of definitions $\phi$, $\psi$ don't matter.

\begin{lemma}
Suppose $A \subseteq \HC$ is absolutely $\Delta_1$ via $\phi(x), \psi(x)$ and also via $\phi'(x), \psi'(x)$, so $\phi(x), \phi'(x)$ are $\Sigma_1$ and $\psi(x), \psi'(x)$ are $\Pi_1$. Then in every forcing extension $\mathbb{V}[G]$, $\phi(\HC^{\mathbb{V}[G]}) = \psi(\HC^{\mathbb{V}[G]}) = \phi'(\HC^{\mathbb{V}[G]}) = \psi'(\HC^{\mathbb{V}[G]})$.
\end{lemma}
\begin{proof}
    We know $\phi(\HC)^{\mathbb{V}[G]} = \psi(\HC)^{\mathbb{V}[G]} = X$ say, and $\phi'(\HC^{\mathbb{V}[G]}) = \psi'(\HC^{\mathbb{V}[G]})= Y$ say. Since $\HC \models \forall x (\phi(x) \rightarrow \psi'(x))$, which is $\Pi_1$, we get by L{\'e}vy's absoluteness principle that the same holds in $\HC^{\mathbb{V}[G]}$, i.e. $X \subseteq Y$. A symmetric argument show $Y \subseteq X$, so $X = Y$ as desired. 
\end{proof}

We denote the common value as $A^{\mathbb{V}[G]}$. By Lemma 2.2 of \cite{URL} we get that if $A$ is absolutely $\Delta_1$ then $(A^{\mathbb{V}[G]}: \mathbb{V}[G] \mbox{ a forcing extension of } \mathbb{V}[G])$ is a ``strongly definable family."

The following is the main source of examples of absolutely $\Delta_1$ sets. Let $ZFC^-_*$ refer to $ZFC^-$ + every set is countable; so in particular, $\HC\models ZFC^-_*$ and every transitive $V \models ZFC^-_*$ is contained in $\HC$. $ZFC^-_*$ is a strengthening of $ZFC^-$, so being absolute to countable transitive models of $ZFC^-_*$ is a weakening of being absolute to transitive models of $ZFC^-$.

\begin{lemma}\label{ZFC-Absolute}
    Suppose $A \subseteq \HC$ is absolute to countable transitive models of $ZFC^-_*$; that is, there is a formula $\phi(x)$ of set theory over some parameter $b \in \HC$ such that $A = \phi(\HC)$ and also for every countable $V \models ZFC^-_*$ with $b \in V$, $A \cap V = \phi(V)$. Then $A$ is absolutely $\Delta_1$.
\end{lemma}
\begin{proof}
    We have that $a \in A$ if and only if for some or every countable transitive $V \models ZFC^-_*$ containing the parameters and $a$, we have $a \in \phi(V)$. 
\end{proof}

For instance, if $\Phi \in \mathcal{L}_{\omega_1 \omega}$ then $\CSS(\Phi)$, being absolute to transitive models of $ZFC^-_*$, is absolutely $\Delta_1$. 

\begin{definition}
Suppose $X_i: i < n$ are absolutely $\Delta_1$ subsets of $\HC$ and $\phi(U_i: i < n)$ is a formula of set theory with $n$ additional unary predicates. Then say that $\phi(X_i: i < n)$ {\em holds persistently} if in every forcing extension $\mathbb{V}[G]$, $\HC^{\mathbb{V}[G]} \models \phi((X_i)^{\mathbb{V}[G]}: i < n)$.

Suppose $f, X, Y$ are absolutely $\Delta_1$. Then say that $f: X \leq_{a \Delta_1} Y$ if persistently, $f$ is an injection from $X$ to $Y$, and write $X \leq_{a \Delta_1} Y$ to indicate the existence of such an $f$. Write $X \sim_{a \Delta_1} Y$ to indicate $X \leq_{a \Delta_1} Y \leq_{a \Delta_1} X$. Write $f: X \cong_{a \Delta_1} Y$ to indicate $f:X \to Y$ is persistently a bijection, and write $X \cong_{a \Delta_1} Y$ to indicate the existence of such an $f$.

If $\Phi$, $\Psi$ are sentences of $\mathcal{L}_{\omega_1 \omega}$ then write $\Phi \leq_{a \Delta_1} \Psi$ to indicate $\CSS(\Phi) \leq_{a \Delta_1} \CSS(\Psi)$. Similarly define $\Phi \sim_{a \Delta_1} \Psi$, $\Phi \cong_{a \Delta_1} \Psi$.
\end{definition}

It is straightforward to show that $\leq_{a \Delta_1}$ is transitive. More involved is the following.

\begin{theorem}
    If $\Phi \leq_{a \Delta^1_2} \Psi$ then $\Phi \leq_{a \Delta_1} \Psi$.
\end{theorem}
\begin{proof}
Let $f: \mbox{Mod}(\Phi) \to \mbox{Mod}(\Psi)$ be an $a \Delta^1_2$ reduction. Let $\phi(x, y), \psi(x, y)$ be the $\Sigma^1_2, \Pi^1_2$ formulas defining $f$. Let $a$ contain all the relevant parameters. Let $\Gamma$ be the fragment of set theory with parameter $a$, asserting of its model $V$ that $V \models ZFC^-_*$ and $\phi(x, y), \psi(x, y)$ define the same function from $\mbox{Mod}(\Phi)$ to $\mbox{Mod}(\Psi)$. Note that if $V \models \Gamma$ is transitive, then $V$ computes $f$ correctly, since $\Sigma^1_2$ statements are upwards absolute and $\Pi^1_2$ statements are downards absolute between transitive models of $ZFC^-$. Hence $V$ also computes the corresponding injection $f_*$ from $\CSS(\Phi)$ to $\CSS(\Psi)$ correctly. Then $(b, c) \in f_*$ if and only if for some or every countable transitive $V \models \Gamma$, with $a, b, c \in V$, we have $(b, c) \in f_*^V$. Thus $f_*$ is absolutely $\Delta_1$; we get that persistently $f_*$ is an injection from $\CSS(\Phi)$ to $\CSS(\Psi)$ because $f$ is an absolutely $\Delta^1_2$ reduction.
\end{proof}

The following observation is an adaptation of the classical Schr\"{o}der--Bernstein theorem to our context. As notation, if $f, X$ are absolutely $\Delta_1$ and $f: X \to \HC$ then we say that $f[X]$ is absolutely $\Delta_1$ to indicate that there is an absolutely $\Delta_1$-class $Y$ such that persistently, $f[X] = Y$.

\begin{theorem}\label{SBForSets}
	$X, Y \subseteq \HC$ are absolutely $\Delta_1$. Suppose $f:X \leq_{a\Delta_1} Y$ and $g: Y \leq_{a\Delta_1} X$ satisfy that $f[X], g[Y]$ are absolutely $\Delta_1$. Then $X \cong_{a\Delta_1} Y$.
\end{theorem}
\begin{proof}
First, note that $f^{-1}$ and $g^{-1}$ are absolutely $\Delta_1$, and persistently $f^{-1}: f[X] \to X$ and $g^{-1}:g[Y] \to Y$.

	Note then that whenever $X' \subseteq X$ is $\Delta_1$-absolute, so is $f[X']$, since given $y \in Y$, we have that $y \in f[X']$ if and only if $y \in f[X]$ and $f^{-1}(y) \in X'$. Similarly, whenever $Y' \subseteq Y$ is absolutely $\Delta_1$, so is $g[Y']$.
	
	So now we can apply the normal proof of the Schr\"{o}der--Bernstein theorem. We can suppose $X$ and $Y$ are disjoint. Write $Z = X \cup Y$ and write $h = f \cup g: Z \to Z$. Define $h': Z \to Z$ via: $h'(a) = h(a)$ if $h^{-n}(a)$ exists for all $n$, and otherwise, if $n$ is least such that $h^{-n}(a)$ is undefined, then define $h'(a) = h(a)$ if $n$ is odd, and define $h'(a) = h^{-1}(a)$ if $n$ is even. Clearly, $h'$ is a bijection from $Z$ to itself such that $h'[X] = Y$ and $h'[Y] = X$,  and by the above remarks, it is clear that this holds persistently.
\end{proof}

\section{Potential Cardinality and the Friedman--Stanley Tower}

In this section we pull several notions of \cite{URL} into our context, and we define the version of the Friedman Stanley tower we wish to use.

\begin{definition}
    If $X$ is absolutely $\Delta_1$, then let $X_{\ptl}$ be the class of all $a \in \mathbb{V}$ such that in some or any forcing extension $\mathbb{V}[G]$ with $a$ hereditarily countable, we have $a \in X^{\mathbb{V}[G]}$.
    
    If $X$ is absolutely $\Delta_1$, let $\|X\| = |X_{\ptl}|$; if $X_{\ptl}$ is a proper class then we write $\|X\| = |X_{\ptl}| = \infty$. Say that $X$ is {\em short} if $\|X\| < \infty$, i.e. $X_{\ptl}$ is a set. 

If $\Phi$ is a sentence of $\mathcal{L}_{\omega_1 \omega}$, then put $\|\Phi\| = \|\CSS(\Phi)\|$ and say that $\Phi$ is short if $\CSS(\Phi)$ is.
\end{definition}

Several basic properties of the ptl map are developed in \cite{URL} (under more general conditions), including Proposition 2.14 thereof:

\begin{lemma}
Suppose $X, Y,f$ are $a \Delta_1$. If persistently $f$ is a function from $X$ to $Y$, then $f_{\ptl}$ is a function from $X_{\ptl}$ to $Y_{\ptl}$. If $f$ is also persistently injective (persistently bijective), then $f_{\ptl}$ is injective (bijective).
\end{lemma}

Thus if $X \leq_{a\Delta_1} Y$, then $\|X\| \leq \|Y\|$. This provides a potent method of proving nonreductions among short sentences, which is exploited in \cite{URL}.

We discuss several examples (and set some notation).

\begin{example}
	$\omega_{\ptl} = \omega$. $(\omega_1)_{\ptl}  = $ ON (the class of all ordinals). $(\HC)_{\ptl} = \mathbb{V}$. $\mathbb{R}_{\ptl} = \mathbb{R}$.
	
	For each $\alpha < \omega_1$, let $\HC_\alpha \subseteq \HC$ be $\HC \cap \mathbb{V}_\alpha$, i.e. the set of all hereditarily countable sets of foundation rank less than $\alpha$. Then $(\HC_\alpha)_{\ptl} = \mathbb{V}_\alpha$.
	
	If $X$ is a class, then let $\mathcal{P}(X)$ denote the class of all subsets of $X$, so $\mathcal{P}(X)$ is a set precisely when $X$ is. For each ordinal $\alpha$, define $\mathcal{P}^\alpha(X)$ inductively, via $\mathcal{P}^{\alpha+1}(X) = \mathcal{P}(\mathcal{P}^\alpha(X))$, and $\mathcal{P}^{\delta}(X) = \bigcup_{\alpha < \delta} \{\alpha\} \times \mathcal{P}^\alpha(X)$ for limit $\delta$ (i.e. we are taking the disjoint union). Define $\mathcal{P}_{\kappa}(X)$ and $\mathcal{P}_{\kappa}^{\alpha}(X)$ similarly, by restricting to subsets of size less than $\kappa$.
	
	Then for any absolutely $\Delta_1$ $X \subseteq \HC$, we have that $\mathcal{P}^\alpha_{\aleph_1}(X)$ is absolutely $\Delta_1$ for every $\alpha < \omega_1$. Further, $(\mathcal{P}_{\aleph_1}^\alpha(X))_{\ptl} = \mathcal{P}^\alpha(X_{\ptl})$.
	
	We shall be particularly interested in $\CSS(\Phi)_{\ptl}$ for $\Phi \in \mathcal{L}_{\omega_1 \omega}$. We call these the potential canonical Scott sentences of $\Phi$. As explored in \cite{URL}, this always contains the class $\CSS(\Phi)_{\sat}$ of satisfiable canonical Scott sentences of $\Phi$, namely $\CSS(\Phi)_{\sat} = \{\css(M): M \models \Phi\}$, and sometimes the inclusion is strict.

\end{example}

\begin{remark}
	The reason we take disjoint unions in the definition of $\mathcal{P}^\alpha(X)$ is so that if $X \leq_{a\Delta_1} Y$, then $\mathcal{P}^\alpha_{\aleph_1}(X) \leq_{a\Delta_1} \mathcal{P}^{\alpha}_{\aleph_1}(Y)$ for all $\alpha< \omega_1$, and similarly for $\sim_{a\Delta_1}$ and $\cong_{a\Delta_1}$. This is straightforward with disjoint union, and problematic without.

\end{remark}

There are many versions of the Friedman--Stanley tower in circulation; for instance the $I_\alpha$ in \cite{FS}, the $\iso_\alpha$ in \cite{HjorthKechrisLouveau}, the $=^\alpha$ in \cite{GaoIDST}, and the $T_\alpha$ in \cite{KoerwienBRDepth}. In \cite{URL} we used the tower $(T_\alpha: \alpha < \omega_1)$ from \cite{KoerwienBRDepth}. The advantage of this is that it is a tower of complete first order theories. For the present work we prefer to use a tower $(\Phi_\alpha: \alpha < \omega_1)$ of sentences of $\mathcal{L}_{\omega_1 \omega}$. One can show that $T_n \sim_B \Phi_n$ for each $n < \omega$, and $T_\alpha \sim_B \Phi_{\alpha+1}$ for all $\alpha \geq \omega$.

The following is as defined by \cite{FS}.

\begin{definition}\label{jumpDefinition}
	Suppose $\mathcal{L}$ is a countable relational language and $\Phi\in \mathcal{L}_{\omega_1,\omega}$.  The {\em jump of $\Phi$}, written $J(\Phi)$, is a sentence of $\mathcal{L}'_{\omega_1 \omega}$ defined as follows, where $\mathcal{L}' =\mathcal{L}\cup\{E\}$ is obtained by adding a new binary relation symbol $E$ to $\mathcal{L}$. Namely $J(\Phi)$ states that $E$ is an equivalence relation with infinitely many classes, each of which is a model of $\Phi$.  If $R\in \mathcal{L}$ and $\overline{x}$ is a tuple not all from the same $E$-class, then $R(\overline{x})$ is defined to be false, so that the models are independent.
\end{definition}

There is a corresponding notion of jump that can be defined directly on equivalence relations. Given an equivalence relation $E$ on $X$, its jump is the equivalence relation $J(E)$ on $X^\omega$, defined by setting $(x_n: n \in \omega) J(E) (y_n: n  \in \omega)$ if there is some $\sigma \in S_\infty$ with $x_{\sigma(n)} E y_n$ for all $n \in \omega$. Then the previous definition of the jump can be viewed as the special case where $(X, E)$  is $(\Mod(\Phi), \cong)$.

We wish to iterate the Friedman--Stanley jump. At limit stages we must explain what we will do. In \cite{URL} we took products, but here we prefer to take disjoint unions:

\begin{definition}
	Suppose $I$ is a countable set and for each $i$, $\Phi_i$ is a sentence of $\mathcal{L}_{\omega_1,\omega}$ in the countable relational language $\mathcal{L}_i$.  The \emph{disjoint union of the $\Phi_i$}, denoted $\sqcup_i \Phi_i$, is a sentence of $\mathcal{L}_{\omega_1 \omega}$, where $\mathcal{L}=\{U_i:i\in I\}\cup \bigcup_i \mathcal{L}_i$ is the disjoint union of the $\mathcal{L}_i$'s together with new unary predicates $\{U_i: i \in I\}$. 
	
	Namely $\sqcup_i \Phi_i$ states that the $U_i$ are disjoint and exhaustive, and that exactly one $U_i$ is nonempty, and that this $U_i$ forms a model of $\Phi_i$ when viewed as an $\mathcal{L}_i$-structure.
\end{definition}
%
%
%
%

We now define the tower $(\Phi_\alpha: \alpha < \omega_1)$. Actually, we proceed more generally, starting with any base theory.
\begin{definition}
	
	Suppose $\Phi$ is a sentence of $\mathcal{L}_{\omega_1 \omega}$ and $\alpha < \omega_1$. Then we define the $\alpha$'th {\em jump}, $J^\alpha(\Phi)$, of $\Phi$ as follows. Let $J^0(\Phi) = \Phi$. Having defined $J^\alpha(\Phi)$, let $J^{\alpha+1}(\Phi) = J(J^\alpha(\Phi))$. For limit stages, let $J^\delta(\Phi) = \sqcup_{\alpha < \delta} J^\alpha(\Phi)$.
	
	Let $\Phi_\alpha = J^\alpha(\Phi_0)$ where $\Phi_0$ is some chosen sentence with countably infinitely many countable models.
\end{definition}

\begin{proposition}\label{countT}
	Suppose $\Phi \in \mathcal{L}_{\omega_1 \omega}$ and $\alpha < \omega_1$. Then: 
	
	\begin{itemize}
		\item[(A)] If $\Phi$ has infinitely many countable models, then $$\mathcal{P}^\alpha_{\aleph_1}(\mbox{CSS}(\Phi)) \leq_{a\Delta_1}\CSS(J^\alpha(\Phi)) \leq_{a\Delta_1} \mathcal{P}^\alpha_{\aleph_1}(\omega \times \mbox{CSS}(\Phi)).$$
		\item[(B)] If $\mbox{CSS}(\Phi) \sim_{a\Delta_1} \omega \times \mbox{CSS}(\Phi)$, then $\mathcal{P}^\alpha_{\aleph_1}(\mbox{CSS}(\Phi)) \sim_{a\Delta_1} \CSS(J^\alpha(\Phi))$.
		\item[(C)] If $\mbox{CSS}(\Phi) \cong_{a\Delta_1} \omega \times \mbox{CSS}(\Phi)$, then $\mathcal{P}^\alpha_{\aleph_1}(\mbox{CSS}(\Phi)) \cong_{a\Delta_1} \CSS(J^\alpha(\Phi))$.
		\item[(D)] If $\Phi$ has infinitely many models, then $\|J^\alpha(\Phi)\| = \beth_\alpha(\|\Phi\|)$. 
	\end{itemize}
\end{proposition}
\begin{proof}
	It suffices to verify (A), while noting (towards applying Theorem~\ref{SBForSets} to (C)) that the images of all the embeddings we construct are $ZFC^-_*$-absolute. 
	
	So we prove (A).
	
	To check that $\mathcal{P}^\alpha_{\aleph_1}(\mbox{CSS}(\Phi)) \leq_{a\Delta_1} \CSS(J^\alpha(\Phi))$ is a routine inductive argument. To show that $\CSS(J^\alpha(\Phi)) \leq_{a\Delta_1} \mathcal{P}^\alpha(\omega \times \CSS(\Phi))$,  we need to handle multiplicities. 
	
	First we find reductions  $f_\beta: \omega \times \mathcal{P}^\beta_{\aleph_1}(\omega \times \CSS(\Phi)) \leq_{a\Delta_1} \mathcal{P}^\beta_{\aleph_1}(\omega \times \CSS(\Phi))$, for each $\beta \leq \alpha$. First, if $a = (m, \phi) \in \omega \times \mbox{CSS}(\Phi)$, then define $f_0(n, \phi)$ to be $(\langle n, m \rangle, \phi)$. Next, having defined $f_\beta$, and given $(n, a) \in \omega \times \mathcal{P}^{\beta+1}_{\aleph_1}(\omega \times \CSS(\Phi))$, define $f_{\beta+1}(n, a) = \{f_\beta(n, b): b \in a\}$. Finally, suppose we have defined $f_\beta$ for each $\beta< \delta$ limit. Given $(n, a) \in \omega \times \mathcal{P}^\delta_{\aleph_1}(\omega \times \CSS(\Phi))$, write $a= (\beta, b)$ for some $b \in \mathcal{P}^\beta_{\aleph_1}(\omega \times \CSS(\Phi))$, and define $f_\delta(n , a) = (\beta, f_{\beta}(n , b))$. 
	
	Now we define reductions $g_\beta: \CSS(J^\beta(\Phi)) \leq_{a\Delta_1} \mathcal{P}^\beta_{\aleph_1}(\omega \times \CSS(\Phi))$ for each $\beta \leq \alpha$. For $\beta = 0$, let $g_0(\phi) = (0, \phi)$. Having defined $g_\beta$, define $g_{\beta+1}: \CSS(J^{\beta+1}(\Phi)) \leq_{a\Delta_1}\mathcal{P}^{\beta+1}_{\aleph_1}(\omega \times \CSS(\Phi))$ as follows: suppose $\phi \in \CSS(J^{\beta+1}(\Phi))$, and let $M \models \phi$; enumerate the equivalence classes of $M$ as $(M_n: n < \omega)$, so each $M_n \models J^\beta(\Phi)$. Now, let $X = \{\css(M_n): n < \omega\}$ and for each $\psi \in X$, let $n_\psi = |\{n < \omega: \css(M_n) = \psi\}|$. Define $g_{\beta+1}(\phi) = \{f_\beta(n_\phi, g_\beta(\psi)): \psi \in X\}$. The limit stage is similar. 
\end{proof}

The following corollary is proved in \cite{URL} in the more general context of $\leq_{\HC}$:

\begin{corollary}\label{countTCor}
	Suppose $\Phi \in \mathcal{L}_{\omega_1 \omega}$. Then for all $\alpha < \beta$, $J^\alpha(\Phi) \leq_{B} J^\beta(\Phi)$. If $\Phi$ is short with more than one countable model, then for all $\alpha < \beta$, $J^\alpha(\Phi)<_{a\Delta_1} J^\beta(\Phi)$ (and hence this is true for $<_B$ as well).
\end{corollary}

It is often said informally that $\Phi_2$ (or $F_2$, or $=^+$) can be identified with countable sets of reals. We can make this literally true with the following theorem.

\begin{theorem}
	For all $\alpha < \omega_1$, $\mbox{CSS}(\Phi_\alpha) \cong_{a\Delta_1} \mathcal{P}^\alpha_{\aleph_1}(\omega) \cong_{a\Delta_1} \HC_{\omega+\alpha} $.
\end{theorem}
\begin{proof}
Let $\mbox{HF}$ denote the hereditarily finite sets; then $\CSS(\Phi_0) \cong_{a\Delta_1} \omega \times \CSS(\Phi_0) \cong_{a\Delta_1} \omega$, so we conclude by Theorem \ref{countT} that for each $\alpha$, $\CSS(\Phi_\alpha) \cong_{a\Delta_1} \mathcal{P}^\alpha_{\aleph_1}(\CSS(\Phi_0)) \cong_{a\Delta_1} \mathcal{P}^\alpha_{\aleph_1}(\omega)$. But $\mathcal{P}^\alpha_{\aleph_1}(\mbox{HF}) \cong_{a\Delta_1} \HC_{\omega+\alpha}$ easily (using Theorem~\ref{SBForSets}), so we are done.
\end{proof}

We can thus identify $\mbox{CSS}(\Phi_\alpha)$ with either $\mathcal{P}^\alpha_{\aleph_1}(\omega)$ or else $\HC_{\omega+\alpha}$, whichever is convenient.
\section{Thickness}\label{BCSBThicknessSec}
In this section we motivate and then define the key technical concept of the paper.

We would love to use counting arguments to characterize Borel completeness. Potential cardinality is not enough: there are examples of relatively nice $\Phi$ that are not short, so potential cardinality says nothing about them. For instance, let $\mbox{TAG}_1$ be the sentence of $\mathcal{L}_{\omega_1 \omega}$ describing torsion abelian groups. We recall some group-theoretic facts:

	For each $p$, let $\mbox{TAG}_{1, p}$ denote the sentence of $\mathcal{L}_{\omega_1 \omega}$ describing abelian $p$-groups; that is, abelian groups $A$ such that for every $a \in A$, we have $p^n a = 0$ for some $n$. It is a standard fact that if $A$ is torsion abelian, then $A$ decomposes uniquely as the direct sum of $p$-torsion groups over all primes $p$. Hence $\mbox{TAG}_1 \cong \prod_{p} \mbox{TAG}_{1, p}$.
	
	Ulm classified torsion abelian groups up to isomorphism in \cite{Ulm}, introducing what is now called the Ulm analysis: we follow the notation of \cite{Thomas2}. Suppose $A$ is a countable abelian $p$-group. (Ulm's analysis actually works for any abelian $p$-group.) Define $(A^\alpha: \alpha \in \mbox{ON})$ inductively as follows: $A^0 = A$, $A^{\alpha+1} = \bigcap_{n < \omega} p^n A^{\alpha}$, and take intersections at limit stages. Let $\tau(A) < \omega_1$ be least so that $A^{\tau(A)} = A^{\tau(A)+1}$. For each $\alpha < \tau(A)$, let $A_\alpha = A^\alpha / A^{\alpha+1}$. Then each $A_\alpha$ is a direct sum of cyclic $p$-groups, and so can be written uniquely as $\oplus_{n \geq 1} (\mathbb{Z}/p^n\mathbb{Z})^{m^\alpha_n(A)}$ where $0 \leq m^\alpha_n(A) \leq \omega$. Also, $A^{\tau(A)}$ is divisible, and hence is determined by its rank $\mbox{rnk}(A^{\tau(A)})$, a number between $0$ and $\omega$. Finally, $A$ is determined up to isomorphism by $(\tau(A), \mbox{rk}(A^{\tau(A)}), A_\alpha/\cong: \alpha < \tau(A))$.

It is often send informally that the countable models of $\mbox{TAG}_1$ are classified by countable subsets of $\omega_1$. We can make this a literal statement:

\begin{proposition}\label{UlmClassification}
	$\mbox{CSS}(\mbox{TAG}_1) \cong_{a\Delta_1} \mathcal{P}_{\aleph_1}(\omega_1)$.
\end{proposition}
\begin{proof}
	
	Since $\mathcal{P}_{\aleph_1}(\omega_1)^\omega \cong_{a\Delta_1} \mathcal{P}_{\aleph_1}(\omega_1)$, it suffices to show that each $\mbox{TAG}_{1, p} \cong_{a\Delta_1} \mathcal{P}_{\aleph_1}(\omega_1)$ (it will be clear from the proof that the reductions are uniform in $p$). We aim to apply Theorem~\ref{SBForSets}.
	
	Let $\langle, \rangle: \omega_1^2 \to \omega_1$ be a $ZFC^-$-absolute bijection (the standard pairing function works). Define $f: \mbox{CSS}(\mbox{TAG}_1) \to \mathcal{P}_{\aleph_1}(\omega_1)$ via $f(A) = \{\langle 0, \tau(A) \rangle, \langle 1, \mbox{rk}(A^{\tau(A)}) \rangle, \langle 2+ \omega \cdot \alpha + n, m^\alpha_n(A) \rangle: \alpha < \tau(A) \}$; the point is that we encode $(\tau(A), \mbox{rank}(A^{\tau(A)}), A_\alpha/\cong: \alpha < \tau(A))$. The image of $f$ is absolute to transitive models of $ZFC^-$, by a straightforward application of Theorem~\ref{Karp}.  
	
	For the reverse direction, Zippin \cite{Zippin} has proven that if $(C_\alpha: \alpha < \gamma)$ is a sequence of countable direct sums of cyclic groups where $\gamma< \omega_1$, and if for all $\alpha$ with $\alpha+1 < \gamma$ we have that $C_\alpha$ contains elements of arbitrarily high order, then there is a reduced countable $p$-torsion group $A$ such that $\tau(A) =\gamma$ and each $A_\alpha \cong C_\alpha$. Moreover, every reduced abelian $p$-group has an Ulm sequence of this form. This easily allows a reduction $g: \mathcal{P}_{\aleph_1}(\omega_1) \leq_{a\Delta_1} \mbox{CSS}(\mbox{TAG}_1)$, such that moreover the range of $g$ is the set of countable reduced $p$-groups, and hence is absolutely $\Delta_1$. 
	
	Thus we conclude by Theorem~\ref{SBForSets}.
\end{proof}

Henceforward we can identify $\mbox{CSS}(\mbox{TAG}_1)$ with $\mathcal{P}_{\aleph_1}(\omega_1)$. In particular $\| {\mbox{TAG}}_1 \| = | \mathcal{P}(\mbox{ON})| = \infty$, so this gives no upper bound on the complexity of ${\mbox{TAG}}_1$; nonetheless, Friedman and Stanley give a fairly simple proof that ${\mbox{TAG}}_1$ is not Borel complete (and the same proof shows it is not $\leq_{\HC}$-complete, in fact.) The need for a counting argument is more acute when we consider the jumps of ${\mbox{TAG}}_1$. We recall their definition from the introduction:

\begin{definition}
	For each $\alpha < \omega_1$, write ${\mbox{TAG}}_{1+\alpha} = J^\alpha({\mbox{TAG}}_1)$, the $\alpha$'th jump of torsion abelian groups.
\end{definition}

For all $\alpha \geq 1$, ${\mbox{TAG}}_\alpha \cong_{a\Delta_1} \mathcal{P}_{\aleph_1}^\alpha(\omega_1)$ by Theorem~\ref{countT} and the fact that $\omega \times \omega_1 \cong_{a\Delta_1} \omega_1$; thus we can identify $\CSS({\mbox{TAG}}_\alpha)$ with $\mathcal{P}^\alpha_{\aleph_1}(\omega_1)$. Thus we are identifying $\CSS(\mbox{TAG}_\alpha)_{\ptl}$ with $\mathcal{P}^\alpha(\mbox{ON})$. This is a small proper class, in the sense that each $|\mathcal{P}^\alpha(\mbox{ON}) \cap \mathbb{V}_{\lambda^+}| = \beth_\alpha(\lambda)$, which is less than the maximal possibility $|\mathbb{V}_{\lambda^+}| = \beth_{\lambda^+}$. Nonetheless, the simple proof that ${\mbox{TAG}}_1$ is not Borel complete does not carry through, and as far as we know the machinery we develop is necessary to conclude $\mbox{TAG}_\alpha$ is not Borel complete.

Our first attempt of directly counting $\mbox{CSS}(\Phi)_{\ptl} \cap \mathbb{V}_{\lambda^+}$ is problematic, because if $f: \Phi \leq_{a \Delta_1} \Psi$ we need not have that $f_{\ptl}$ carries $\mbox{CSS}(\Phi)_{\ptl} \cap \mathbb{V}_{\lambda^+}$ to $\mbox{CSS}(\Psi)_{\ptl} \cap \mathbb{V}_{\lambda^+}$. The following example demonstrates this:

\begin{example}
	Let $\mathcal{L}_0 = \{R_0\}$ and let $\mathcal{L}_1 = \{R_0, R_1\}$, where $R_0, R_1$ are binary relation symbols. Let $f: \mbox{Mod}(\mathcal{L}_1) \to \mbox{Mod}(\mathcal{L}_0)$ be the reduct map. Let $f_*: \CSS(\mathcal{L}_1) \to \CSS(\mathcal{L}_0)$ be the induced map on Scott sentences. Then for every cardinal $\lambda$ and for every $\kappa < \beth_{\lambda^+}$, there is some $\phi \in \mbox{CSS}(\mathcal{L}_1)_{\ptl} \cap \mathbb{V}_{\lambda^+}$, such that $(f_*)_{\ptl}(\phi) \not \in \mathbb{V}_{\kappa}$---in particular (choosing $\kappa = \lambda^+$), we can arrange $(f_*)_{\ptl}(\phi) \not \in \mathbb{V}_{\lambda^+}$. 
\end{example}
\begin{proof}
	Choose $\alpha < \lambda^+$ such that $\kappa^+ < \beth_{\alpha}$. We define an $\mathcal{L}_1$-structure $(M, R_0^M, R_1^M)$ as follows: let $(M, R_1^M) = (\mathbb{V}_{\alpha}, \in)$, and let $R_0^M$ be a well-ordering of $\mathbb{V}_\alpha$. Note that $(\mathbb{V}_\alpha, \in)$ is rigid and has Scott rank approximately $\alpha$, so $\mbox{css}(M, R_0^M, R_1^M) \in \mathbb{V}_{\lambda^+}$. On the other hand, $(M, R_0^M)$ is a well-ordering of length longer than $\kappa^+$, and so its canonical Scott sentence cannot be in $\mathbb{V}_\kappa$. 
\end{proof}

The idea for getting around this is to count $|\CSS(\Phi)_{\ptl} \cap A|$ for  $A \in \mathbb{V}_{\lambda^+}$ which are closed under $f_{\ptl}$ for various $f$.

\begin{definition}
	Let $\mathbb{F}$ be the set of all absolutely $\Delta_1$ $f$ such that persistently, $f: \HC \to \HC$. 
	
	Suppose $\overline{f} = (f_i: i < n) \in \mathbb{F}^{<\omega}$. Then say that $A$ is $\overline{f}$-closed if $A$ is a transitive set with $A^{<\omega} \subseteq A$, and $(f_i)_{\ptl}[A] \subseteq A$ for each $i < n$.
\end{definition}

If $f$ is an absolutely $\Delta_1$ map defined on some absolutely $\Delta_1$ $X \subseteq \HC$, we identify $f$ with $f' \in \mathbb{F}$ which is defined to be $\emptyset$ off of $X$.

The following simple lemma will be used implicitly henceforth:

\begin{lemma}\label{ThicknessPairing} Suppose $f_i: i < n$ is any sequence from $\mathbb{F}$. Define $f: \HC \to \HC$ to be $\prod_{i < n} f_i$, that is $f(a) = (f_i(a): i < n)$. Then $f \in \mathbb{F}$, and for every set $A$, we have that $A$ is $f$-closed if and only if $A$ is $\overline{f}$-closed.
\end{lemma}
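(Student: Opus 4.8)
\textbf{Proof proposal for Lemma~\ref{ThicknessPairing}.}

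The plan is to verify the two assertions in turn, both of which should be essentially routine given the machinery already set up. First I would check that $f = \prod_{i<n} f_i \in \mathbb{F}$. For each $i < n$, pick $\Gamma_i$ robust with $f_i \in \mathbb{F}_{\Gamma_i}$, say $f_i$ is $\Gamma_i$-absolute via a formula $\phi_i(x,y,a_i)$ and $\Gamma_i$-persistently $f_i : \HC \to \HC$. Let $\Gamma = \bigcup_{i<n} \Gamma_i$; this is a countable set of formulas of set theory with parameters from $\HC$, it extends $ZFC^-$, and each of the robustness clauses (I)--(IV) holds for $\Gamma$ because it holds for each $\Gamma_i$ (the clauses are closed under finite unions of the axiom sets). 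So $\Gamma$ is robust. Now define $f$ by the formula $\phi(x,y,\overline{a})$ asserting ``$y = (y_i : i<n)$ and $\phi_i(x, y_i, a_i)$ for each $i < n$'', where $\overline{a}$ codes $(a_i : i<n)$. By Lemma~\ref{ZFminusAbsoluteness} (or directly, since each $\phi_i$ is $\Gamma_i$-absolute hence $\Gamma$-absolute), $\phi$ witnesses that the graph of $f$ is $\Gamma$-absolute, and $\Gamma$-persistently $f : \HC \to \HC$ because $\Gamma$-persistently each $f_i : \HC\to\HC$ and $\HC^{<\omega} \subseteq \HC$. Hence $f \in \mathbb{F}_\Gamma \subseteq \mathbb{F}$.

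Next I would unwind the definition of $\overline{f}$-closure and $f$-closure for a set $A$. Recall that $A$ is $\overline{f}$-closed iff $A$ is transitive, $A^{<\omega} \subseteq A$, and $(f_i)_{\ptl}[A] \subseteq A$ for each $i < n$; while $A$ is $f$-closed iff $A$ is transitive, $A^{<\omega}\subseteq A$, and $f_{\ptl}[A] \subseteq A$. The transitivity and $A^{<\omega}\subseteq A$ conditions are literally the same in both cases, so it suffices to show that, under the standing assumption $A^{<\omega}\subseteq A$, we have $f_{\ptl}[A]\subseteq A$ iff $(f_i)_{\ptl}[A]\subseteq A$ for all $i<n$. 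The key point is that $f_{\ptl}(a) = ((f_i)_{\ptl}(a) : i<n)$ for every $a$: this follows because for any transitive $V \models \Gamma$ with $a \in V$, evaluating the formula $\phi$ inside $V$ gives $(f)^V(a) = ((f_i)^V(a) : i < n)$, and then $f_{\ptl}(a)$, $(f_i)_{\ptl}(a)$ are the common values of $(f)^V(a)$, $(f_i)^V(a)$ across all sufficiently large such $V$ (using that $X_{\ptl}$ does not depend on the choice of robust $\Gamma$, and that each $f_i$ agrees with its $\Gamma_i$-definition in any $V \models \Gamma$ since $\Gamma \supseteq \Gamma_i$).

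Granting this, if $(f_i)_{\ptl}[A]\subseteq A$ for all $i$, then for $a \in A$ each coordinate $(f_i)_{\ptl}(a) \in A$, so the tuple $((f_i)_{\ptl}(a):i<n) = f_{\ptl}(a)$ lies in $A^{<\omega} \subseteq A$; hence $f_{\ptl}[A]\subseteq A$. Conversely, if $f_{\ptl}[A]\subseteq A$, then for $a \in A$ the tuple $f_{\ptl}(a) = ((f_i)_{\ptl}(a):i<n) \in A$, and since $A$ is transitive each coordinate $(f_i)_{\ptl}(a)$ is an element of a member of $A$, hence in $A$ (here I use transitivity together with the usual set-theoretic coding of finite tuples, under which the entries of a tuple lie in its transitive closure); thus $(f_i)_{\ptl}[A]\subseteq A$ for each $i$. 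I expect the only mild subtlety — the ``main obstacle,'' such as it is — to be the bookkeeping in the previous paragraph: being careful that $f_{\ptl}$ really is computed coordinatewise, which requires the observation that one may use a single robust $\Gamma$ extending all the $\Gamma_i$ simultaneously, so that the definitions of all the $f_i$ and of $f$ are all correctly evaluated in the same models $V \models \Gamma$.
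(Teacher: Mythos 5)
Your proposal is correct and follows essentially the same route as the paper's (much terser) proof: take $\Gamma=\bigcup_i\Gamma_i$, note it is robust so $f\in\mathbb{F}_\Gamma$, and then use transitivity together with $A^{<\omega}\subseteq A$ to see that the tuple $(f_i)_{\ptl}(a):i<n)$ lies in $A$ exactly when each coordinate does. Your extra care about computing $f_{\ptl}$ coordinatewise in a common model of $\Gamma$ is a reasonable elaboration of a step the paper leaves implicit.
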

\begin{proof}
	Clearly $f \in \mathbb{F}$. To finish, since we are requiring $A$ to be transitive and $A = A^{<\omega}$, we have that $((f_i)_{\ptl}(a): i < n) \in A$ if and only if each $(f_i)_{\ptl}(a) \in A$.
\end{proof}

The following fundamental observation will be the motivation for our definition of thickness:

\begin{lemma}\label{PreThickness}
	Suppose $X, Y$ are absolutely $\Delta_1$, such that for every $f \in \mathbb{F}$, there is an $f$-closed set $A$ with $|X_{\ptl} \cap A| > |Y_{\ptl} \cap A|$. Then $X \not \leq_{a \Delta_1} Y$.
\end{lemma}
\begin{proof}
	We prove the contrapositive. Suppose $f: X \leq_{a \Delta_1} Y$. As mentioned above, we view $f \in \mathbb{F}$ by defining $f(a) = \emptyset$ for $a \not \in X$. Suppose $A$ is $f$-closed.  Then $f_{\ptl}$ clearly witnesses that $|X_{\ptl} \cap A| \leq |Y_{\ptl} \cap A|$.
\end{proof}

\begin{definition}
	Suppose $X$ is absolutely $\Delta_1$. Suppose $\lambda$ is an infinite cardinal.
	
	Then define $\tau(X, \lambda)$, the {\em thickness of $X$ at $\lambda$}, to be the least cardinal $\kappa$ such that there is some $f \in \mathbb{F}$ such that $|X_{\ptl} \cap A| \leq \kappa$ for all $f$-closed $A \in \mathbb{V}_{\lambda^+}$. Alternatively, we have that $\tau(X, \lambda) > \kappa$ if and only if for every $f \in \mathbb{F}$, there is some $f$-closed $A \in \mathbb{V}_{\lambda^+}$ with $|X_{\ptl} \cap A| > \kappa$. 
	
	If $\Phi$ is a sentence of $\mathcal{L}_{\omega_1 \omega}$ then define $\tau(\Phi, \lambda) = \tau(\CSS(\Phi), \lambda)$.
\end{definition}

Some simple observations: $\tau(X, \lambda) \leq |X_{\ptl} \cap \mathbb{V}_{\lambda^+}| \leq \beth_{\lambda^+}$, and $\tau(X, \lambda)$ is monotone in $\lambda$, with $\lim_{\lambda \to \infty} \tau(X, \lambda) =\|X\|$.

The following theorem is a simple twist to the idea of Lemma~\ref{PreThickness}, just packaged in terms of the $\tau$ function.
\begin{theorem}\label{ThicknessInvariance}
	If $X_1 \leq_{a\Delta_1} X_2$, then $\tau(X_1, \lambda) \leq \tau(X_2, \lambda)$ for every infinite cardinal $\lambda$.
\end{theorem}
\begin{proof}
	Choose $f: X_1 \leq_{a\Delta_1} X_2$. Let $\lambda$ be given. Suppose towards a contradiction that $\tau(X_1, \lambda) > \tau(X_2, \lambda)= \kappa$. Choose $g \in \mathbb{F}$ witnessing that $\tau(X_2, \lambda) = \kappa$, that is, whenever $A \in \mathbb{V}_{\lambda^+}$ is $g$-closed, we have $|(X_2)_{\ptl} \cap A| \leq \kappa$. 
	
	By hypothesis (and Lemma~\ref{ThicknessPairing}), we can find some $(f, g)$-closed $A \in \mathbb{V}_{\lambda^+}$ such that $|(X_1)_{\ptl} \cap A| > \kappa$; by choice of $g$, $|(X_2)_{\ptl} \cap A| \leq \kappa$. But since $A$ is also $f$-closed, we have that $f_{\ptl}$ restricts to an injection from $(X_1)_{\ptl}\cap A$ to $(X_2)_{\ptl}\cap A$, a contradiction.
\end{proof}

The following theorem is also straightforward.

\begin{theorem}\label{ThicknessJumpsLowerBound}
	For all $\Phi, \lambda, \alpha$, if $\Phi$ has infinitely many countable models, then $\tau(J^\alpha(\Phi), \lambda) \leq \beth_\alpha(\tau(\Phi, \lambda))$.
\end{theorem}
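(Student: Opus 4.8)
The plan is to reduce the statement to the corresponding counting statement about potential canonical Scott sentences, using the identification $\CSS(J^\alpha(\Phi)) \leq_{ZFC^-} \mathcal{P}^\alpha_{\aleph_1}(\omega \times \CSS(\Phi))$ from Proposition~\ref{countT}(A) together with the thickness invariance from Theorem~\ref{ThicknessInvariance}. By Theorem~\ref{ThicknessInvariance}, since $\CSS(J^\alpha(\Phi)) \leq_{ZFC^-} \mathcal{P}^\alpha_{\aleph_1}(\omega \times \CSS(\Phi))$, we get $\tau(J^\alpha(\Phi), \lambda) \leq \tau(\mathcal{P}^\alpha_{\aleph_1}(\omega \times \CSS(\Phi)), \lambda)$; and since trivially $\CSS(\Phi) \leq_{ZFC^-} \omega \times \CSS(\Phi)$ we have $\tau(\omega \times \CSS(\Phi), \lambda) = \tau(\CSS(\Phi),\lambda) = \tau(\Phi,\lambda)$ (the reverse inequality is immediate since $\CSS(\Phi) \leq_{ZFC^-} \omega\times\CSS(\Phi) \leq_{ZFC^-} \CSS(\Phi)$, the latter e.g. by projecting to the second coordinate on a $\Gamma$-absolute section, or just noting $\omega\times Y$ and $Y$ are $ZFC^-$-bi-reducible whenever $Y\neq\emptyset$). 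So it suffices to prove the abstract statement: for any $\Gamma$-absolute $X$ with $X\neq\emptyset$ and any $\alpha<\omega_1$, $\tau(\mathcal{P}^\alpha_{\aleph_1}(X), \lambda) \leq \beth_\alpha(\tau(X,\lambda))$.

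The main work is therefore this abstract inequality, which I would prove by induction on $\alpha$. Fix $\lambda$ and write $\kappa = \tau(X,\lambda)$. By definition of thickness, fix $f \in \mathbb{F}$ witnessing $\tau(X,\lambda) = \kappa$, i.e. $|X_{\ptl} \cap A| \leq \kappa$ for every $f$-closed $A \in \mathbb{V}_{\lambda^+}$. Recall from the Examples section that $(\mathcal{P}^\alpha_{\aleph_1}(X))_{\ptl} = \mathcal{P}^\alpha(X_{\ptl})$. The key point is that for any transitive $A$ with $A^{<\omega}\subseteq A$ that is $f$-closed, we have $\mathcal{P}^\alpha(X_{\ptl}) \cap A \subseteq \mathcal{P}^\alpha(X_{\ptl}\cap A)$ (a set of rank-$\alpha$ iterated subsets of $X_{\ptl}$ that lies in the transitive set $A$ has all its members, members-of-members, etc., also in $A$), and $|\mathcal{P}^\alpha(X_{\ptl}\cap A)| \leq \beth_\alpha(|X_{\ptl}\cap A|) \leq \beth_\alpha(\kappa)$. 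Thus the \emph{same} witness $f$ already shows $\tau(\mathcal{P}^\alpha_{\aleph_1}(X),\lambda) \leq \beth_\alpha(\kappa)$, and we do not even strictly need the induction — the bound is immediate from the ptl-computation plus the observation about transitive closure. (One should double-check the disjoint-union convention in the definition of $\mathcal{P}^\delta$ at limits $\delta$: a member of $\mathcal{P}^\delta(X_{\ptl})\cap A$ is a pair $(\beta, b)$ with $b \in \mathcal{P}^\beta(X_{\ptl})$, and transitivity of $A$ still delivers $b \in A$ and $\beta \in A$, so $|\mathcal{P}^\delta(X_{\ptl})\cap A| \leq \sum_{\beta<\delta} \beth_\beta(\kappa) \leq \beth_\delta(\kappa)$ since $\delta<\omega_1$.)

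I expect the main obstacle to be bookkeeping rather than conceptual: namely, verifying carefully that $\mathcal{P}^\alpha_{\aleph_1}(X)$ is $\Gamma$-absolute for a suitable robust $\Gamma$ (so that $\tau$ is defined for it) — this is asserted in the Examples section — and that $(\mathcal{P}^\alpha_{\aleph_1}(X))_{\ptl} = \mathcal{P}^\alpha(X_{\ptl})$ interacts correctly with the transitive $f$-closed sets $A$, in particular that membership of an element of $\mathcal{P}^\alpha(X_{\ptl})$ in a transitive $A$ forces all the relevant ``ingredients'' into $A$. One subtlety: the definition of thickness quantifies over $f \in \mathbb{F} = \bigcup_\Gamma \mathbb{F}_\Gamma$, so there is no issue combining the witness $f$ for $X$ with the trivial structure needed for $\mathcal{P}^\alpha_{\aleph_1}(X)$; if any auxiliary $\Gamma$-absolute function is needed, pass to a union of finitely many robust $\Gamma$'s as in Lemma~\ref{ThicknessPairing}. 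Modulo these routine checks the proof is short.
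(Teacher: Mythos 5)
Your proposal is correct and is essentially the paper's argument: the paper also takes the same witness $f$ for $\tau(\Phi,\lambda)=\kappa$ and observes that any $f$-closed $A\in\mathbb{V}_{\lambda^+}$ then satisfies $|\CSS(J^\alpha(\Phi))_{\ptl}\cap A|\leq\beth_\alpha(\kappa)$, which is exactly your counting via $\CSS(J^\alpha(\Phi))_{\ptl}\cong\mathcal{P}^\alpha(\omega\times\CSS(\Phi)_{\ptl})$ and the transitivity of $A$. Your write-up just makes explicit (via Theorem~\ref{ThicknessInvariance} and the containment $\mathcal{P}^\alpha(Y)\cap A\subseteq\mathcal{P}^\alpha(Y\cap A)$) the step the paper dismisses as ``clearly.''
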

\begin{proof}
	Write $\kappa = \tau(\Phi, \lambda)$; choose $f \in \mathbb{F}$ such that whenever $A \in \mathbb{V}_{\lambda^+}$ is $f$-closed, then $|\CSS(\Phi)_{\ptl} \cap A| \leq \kappa$. Then clearly also $|\CSS(J^\alpha(\Phi))_{\ptl} \cap A| \leq \beth_\alpha(\kappa)$ as desired.
\end{proof}

We do not know how to prove the reverse inequality in general, although we suspect that at least for $\lambda = \aleph_0$, it should be true. Instead we focus on special cases, where $\Phi$ is either some $\Phi_\alpha$ or some ${\mbox{TAG}}_\alpha$. Our task boils down to constructing thick transitive sets in $\mathbb{V}_{\lambda^+}$, as the following lemma indicates.

\begin{lemma} \label{ThicknessComp0}
	There is some $f \in \mathbb{F}$, such that for every $f$-closed $A$, $|\CSS(\mbox{Graphs})_{\ptl} \cap A| = |A|$, and for every $\alpha < \omega_1$, $|\mbox{CSS}(\Phi_\alpha)_{\ptl} \cap A| = |\mathcal{P}^\alpha(\omega) \cap A|$, and $|\mbox{CSS}({\mbox{TAG}}_\alpha)_{\ptl} \cap A| = |\mathcal{P}^\alpha(\mbox{ON}) \cap A|$. 
\end{lemma}
\begin{proof}
	We claim we can choose $f \in \mathbb{F}$ so as to encode reductions between $\mbox{Graphs}$ and $\HC$, between $\Phi_\alpha$ and $\mathcal{P}^\alpha_{\aleph_1}(\omega)$ for each $\alpha < \omega_1$, and between ${\mbox{TAG}}_\alpha$ and $\mathcal{P}^\alpha_{\aleph_1}(\omega_1)$ for each $\alpha < \omega_1$; and finally, the map sending $a$ to the foundation rank $\mbox{rnk}(a)$. Finding $f$ is not hard; note, for instance, that we can find some $f_0 \in \mathbb{F}$ such that persistently, for all $\alpha < \omega_1$, $f_0 \restriction_{\{\alpha\} \times \mbox{\small{CSS}}(\Phi_\alpha)}$ induces an absolutely $\Delta_1$-reduction from $\Phi_\alpha$ to $\mathcal{P}^\alpha_{\aleph_1}(\omega)$. $f$ will be a product of several such $f_i$'s.
	
	Then it is straightforward to see that $f$ works. For instance, suppose $A$ is $f$-closed, and either $\mbox{CSS}(\Phi_\alpha)_{\ptl} \cap A$ or else $\mathcal{P}^\alpha(\omega) \cap A$ is nonempty. Then $\alpha \in A$ since $A$ is closed under $\mbox{rnk}$, so $A$ will be $(g, h)$-closed, where $g, h$ are the $ZFC^-$-reductions between $\Phi_\alpha$ and $\mathcal{P}^\alpha(\omega)$ coded by $f$.
\end{proof}
The following definition is motivated by the above lemma.

\begin{definition}
	The infinite cardinal $\lambda$ \emph{admits thick sets} if for every $\alpha < \lambda^+$, and for every $f \in \mathbb{F}$, there is some $f$-closed $A \in \mathbb{V}_{\lambda^+}$, such that $|\mathcal{P}^\alpha(\lambda) \cap A| = \beth_\alpha(\lambda)$.
\end{definition}

Note that $|\mathcal{P}^\alpha(\lambda) \cap A| \leq |\mathcal{P}^\alpha(\lambda)| = \beth_\alpha(\lambda)$ always, so it suffices to assert $\geq$ in the above definition.

Over the next several sections, we prove the following.

\begin{theorem} \label{FatModelsExist1}
	Every regular strong limit cardinal admits thick sets. 
\end{theorem}

A regular strong limit is either $\aleph_0$ or inaccessible. So in particular, $\aleph_0$ admits thick sets.

We have the following:
\begin{corollary}\label{ThicknessComp}
	Suppose $\lambda$ is a regular strong limit. Then for every $\alpha < \omega_1$, $\tau(\Phi_\alpha, \lambda) = \beth_\alpha$, and $\tau({\mbox{TAG}}_\alpha, \lambda) = \beth_\alpha(\lambda)$. Also, if $\Phi$ is Borel complete then $\tau(\Phi, \lambda) = \beth_{\lambda^+}$. In particular, this happens whenever $\lambda$ is a regular strong limit.
\end{corollary}
\begin{proof}
	Choose $f$ as in Theorem \ref{ThicknessComp0}.
	
	For $\Phi_\alpha$, we will not actually need our hypothesis on $\lambda$: note that $\aleph_0$ is a regular strong limit, and hence admits thick sets. Then $f$ witnesses that $\tau(\Phi_\alpha, \aleph_0) = \beth_\alpha$: suppose $A \in \mathbb{V}_{\aleph_1}$ is $f$-closed. Then $|\mbox{CSS}(\Phi_\alpha)_{\ptl} \cap A| = |\mathcal{P}^\alpha(\omega) \cap A|$. This is always at most $\beth_\alpha$, but since $\aleph_0$ admits thick sets, for every $g \in \mathbb{F}$ we can also arrange that $A$ is $g$-closed and $|\mathcal{P}^\alpha(\omega) \cap A| = \beth_\alpha$.
	
	The rest is similar.
\end{proof}

We have the following immediate consequence; the case $\alpha = 1$ was proved by Friedman and Stanley in \cite{FS}, but for $\alpha > 1$, it is new that ${\mbox{TAG}}_\alpha$ is not Borel complete.

\begin{corollary}\label{UsAndTs}
	For all $1 \leq \alpha < \omega_1$, $\Phi_{\alpha+1} \not \leq_{B} {\mbox{TAG}}_\alpha$ (in fact $\Phi_{\alpha+1} \not \leq_{a\Delta_1} {\mbox{TAG}}_\alpha$).
\end{corollary}
\begin{proof}
	This is because $\tau(\Phi_{\alpha+1}, \aleph_0) = \beth_{\alpha+1} > \beth_\alpha = \tau({\mbox{TAG}}_\alpha, \aleph_0)$.
\end{proof}


\section{$f_{\ptl}$ is sufficiently absolute}\label{Technical1}
This is the first of three technical sections where we prove needed lemmas for Theorem~\ref{FatModelsExist1}. This section is the only place in our arguments where $\leq_{\HC}$ from \cite{URL} would not work; we need the additional absoluteness of $\leq_{a \Delta_1}$.

We begin with the following observation:

\begin{lemma}\label{AmenableTechnical}
Suppose $\lambda$ is a regular cardinal and $P \in H(\lambda)$ is a forcing notion. Suppose $G$ is $P$-generic over $\mathbb{V}$. Then $H(\lambda)[G] = H(\lambda)^{\mathbb{V}[G]}$.
\end{lemma}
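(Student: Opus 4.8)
The statement to prove is Lemma~\ref{AmenableTechnical}: for $\lambda$ regular, $P \in H(\lambda)$ a forcing notion, and $G$ being $P$-generic over $\mathbb{V}$, we have $H(\lambda)[G] = H(\lambda)^{\mathbb{V}[G]}$. The natural approach is a double inclusion. For $\subseteq$: every element of $H(\lambda)[G]$ is $\val(\dot{a}, G)$ for some $P$-name $\dot{a} \in H(\lambda)$; since $P \in H(\lambda)$, $\lambda$ is regular, and the transitive closure of $\dot{a}$ has size $<\lambda$, a simple transfinite recursion on rank shows $\val(\dot{a},G)$ has hereditary cardinality $<\lambda$ in $\mathbb{V}[G]$ — bounding $|\mbox{tcl}(\val(\dot a,G))|$ by $|\mbox{tcl}(\dot a)| + |P| < \lambda$ using regularity — so it lies in $H(\lambda)^{\mathbb{V}[G]}$. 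For $\supseteq$: given $b \in H(\lambda)^{\mathbb{V}[G]}$, pick in $\mathbb{V}[G]$ a surjection from some $\mu < \lambda$ onto $\mbox{tcl}(\{b\})$, and pull back via the standard nice-name construction: there is a $P$-name $\dot{b}$ with $|\mbox{tcl}(\dot b)|$ bounded in terms of $\mu$ and $|P|$, hence $<\lambda$ by regularity, with $\val(\dot b, G) = b$. So $b \in H(\lambda)[G]$.

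\textbf{Key steps in order.} First I would fix notation: $H(\lambda)$ is transitive, and by the earlier remarks $H(\lambda) \models ZFC^-$ since $\lambda$ is an uncountable regular cardinal (if $\lambda = \aleph_0$ the statement is trivial, so assume $\lambda$ uncountable). Second, establish the $\subseteq$ direction by induction on the rank of $P$-names in $H(\lambda)$: the value $\val(\dot a, G)$ of a name $\dot a \in H(\lambda)$ satisfies, in $\mathbb{V}[G]$, that its transitive closure injects into $\mbox{tcl}(\dot a) \times P$ (each element of the transitive closure of the value arises from a name appearing hereditarily in $\dot a$, tagged by a condition), so has cardinality $\le |\mbox{tcl}(\dot a)| \cdot |P| < \lambda$ by regularity and the fact that both factors are $<\lambda$. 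Third, for $\supseteq$, I would argue that $H(\lambda)[G]$ is transitive (it is, being a class of the form $\{\val(\dot a,G): \dot a \in H(\lambda)\}$ closed downward under $\in$ by the first part), so it suffices to show every $b \in H(\lambda)^{\mathbb{V}[G]}$ is in $H(\lambda)[G]$; doing this by $\in$-induction, assume every element of $b$ is $\val(\dot c, G)$ for some $\dot c \in H(\lambda)$, and since $|b| < \lambda$ (so there is a set of $<\lambda$ many such names, collectible in $\mathbb{V}$ by using that $H(\lambda) \models$ collection, or just picking witnesses), form $\dot b = \{(\dot c, p) : \ldots\}$; a mild care point is that one wants $\dot b \in H(\lambda)$, which follows since it has $<\lambda$ elements each in $H(\lambda)$ and $\lambda$ is regular.

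\textbf{Main obstacle.} The only genuinely delicate point is the $\supseteq$ direction: one must produce, inside $\mathbb{V}$, a small family of $P$-names whose values (under the given $G$) cover $\mbox{tcl}(\{b\})$, even though $b$ itself lives only in $\mathbb{V}[G]$. The standard device is that $b$, having hereditary cardinality $<\lambda$ in $\mathbb{V}[G]$, is coded by a subset of $\mu$ for some $\mu<\lambda$ (a well-order of $\mbox{tcl}(\{b\})$ of order type $<\lambda$), and a subset of $\mu$ in $\mathbb{V}[G]$ always has a nice $P$-name $\dot S \subseteq \check\mu \times A$ where $A$ is an antichain — of size $\le |\mu| \cdot |P| < \lambda$, so $\dot S \in H(\lambda)$ — from which $b$ is recovered by decoding inside $\mathbb{V}[G]$; but ``decoding'' must itself be rendered as a name operation. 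The cleanest route is therefore the $\in$-recursive one sketched above, where one never leaves $H(\lambda)$ because at each stage one handles a set of size $<\lambda$ and invokes regularity; I expect the write-up's main subtlety to be merely checking that the recursively built name $\dot b$ genuinely lands in $H(\lambda)$ and that the recursion is well-founded, both of which are routine given regularity of $\lambda$.
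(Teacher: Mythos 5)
Your $\subseteq$ direction is fine (the paper dismisses it as clear, and your counting argument is the standard justification). The problem is in the $\supseteq$ direction, and it is not a "routine care point" — it is the whole content of the lemma. In your $\in$-recursive step you choose, for each $x\in b$, a name $\dot c_x\in H(\lambda)^{\mathbb{V}}$ with $\mbox{val}(\dot c_x,G)=x$, and then assemble $\dot b$ from the set $\{\dot c_x: x\in b\}$. That choice is made in $\mathbb{V}[G]$: the defining property of the witnesses mentions $b$ and $G$. So the set of chosen names, although each of its members lies in $H(\lambda)^{\mathbb{V}}$, is a priori only an element of $\mathbb{V}[G]$, and a set of fewer than $\lambda$ ground-model objects living in $\mathbb{V}[G]$ need not belong to $\mathbb{V}$ at all (a Cohen real, viewed as a set of pairs of integers, is the basic counterexample). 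Consequently the assembled $\dot b$ need not be a $P$-name of the ground model, let alone one in $H(\lambda)$. Appealing to collection in $H(\lambda)$ cannot repair this, since the formula you would collect over quantifies over $G$.

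The paper's proof is built precisely to avoid this pointwise choice. It fixes in $\mathbb{V}[G]$ a single rank-respecting surjection $f:\gamma_*\times\lambda\to\mbox{tcl}(a\cup\{a\})$, takes one $P$-name $\dot f$ for it in $\mathbb{V}$ (which need not lie in $H(\lambda)$), and then works entirely in $\mathbb{V}$, defining by recursion on $\gamma$ the small names $\dot c_{\alpha,\gamma}=\{\langle\dot c_{\beta,\gamma'},q\rangle: q\Vdash_P \dot f(\beta,\gamma')\in\dot f(\alpha,\gamma)\}$. The forcing relation does the job your pointwise choice was supposed to do, and the recursion runs over ordinal indices available in the ground model rather than over elements of $b$. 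Ironically, your first instinct — take a nice name $\dot S\in H(\lambda)$ for a code of $\mbox{tcl}(\{b\})$ as a binary relation on some $\mu<\lambda$, then decode — is the salvageable route: the decoding is the Mostowski collapse, which can be computed inside $H(\lambda)[G]$ itself since that structure is a transitive model of $ZFC^-$ by Lemma~\ref{ForcingLemma1}. You identified the right obstacle ("decoding must itself be rendered as a name operation") but then retreated to the one construction that cannot be carried out in $\mathbb{V}$.
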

\begin{proof}
Clearly, $H(\lambda)[G] \subseteq H(\lambda)^{\mathbb{V}[G]}$. Conversely, suppose $a \in H(\lambda)^{\mathbb{V}[G]}$; we need to find a name for $a$ in $H(\lambda)$. Let $b$ be the transitive closure of $a \cup \{a\}$. Let $\mbox{rnk}$ be foundation rank. Let $\gamma_* = \mbox{rnk}(a)< \lambda^+$, and choose a surjection $f: (\gamma_*+1) \times \lambda \to b$, such that for all $(\gamma, \alpha) \in \gamma_* \times \lambda$, $\mbox{rnk}(f(\gamma, \alpha)) \leq \gamma$, and such that $f(\gamma_*, 0) = a$.

Choose $P$-names $\dot{a}, \dot{b}, \dot{f}$ and some $p \in G$ such that $\mbox{val}(\dot{a}, G) = a$, $\mbox{val}(\dot{b}, G) = b$, and $\mbox{val}(\dot{f}, G) = f$, and such that $p$ forces the preceding holds. 

The remainder of the argument takes place in $\mathbb{V}$. 

By induction on $\gamma \leq \gamma_*$, define $P$-names $(\dot{c}_{\gamma, \alpha}: \alpha < \lambda)$. Namely, $\dot{c}_{\gamma, \alpha}$ has domain $\{\dot{c}_{\gamma', \beta}: \beta < \lambda, \gamma' < \gamma\}$, and we want each $\dot{c}_{\gamma, \alpha}(\dot{c}_{\gamma', \beta}) = \|\dot{f}(\gamma', \beta) \in \dot{f}(\gamma, \alpha)\|_{\mathcal{B}(P)}$, an element of $\mathcal{B}(P)$. (Here $\mathcal{B}(P)$ is the Boolean-algebra completion of $P$, see \cite{Jech}. We can define $\mathcal{B}(P)$ as a subset of the powerset of $P$, and hence as a subset of $H(\lambda)$.) Then $p \Vdash \dot{c}_{\gamma_*, 0} = \dot{a}$, and $\dot{c}_{\gamma_*, 0} \in H(\lambda)$, so we are done.
\end{proof}
In particular, in the above situation, $\HC^{H(\lambda)[G]} = \HC^{\mathbb{V}[G]}$.

\begin{lemma}\label{ProductForcing}
    Suppose $V \models ZFC^-$ and $P \in V$ and $G \times G'$ is $P \times P$-generic over $V$. Then $V[G] \cap V[G'] = V$.
\end{lemma}
\begin{proof}
    When $V \models ZFC$ this is Corollary 2.3 of \cite{KaplanShelah}. The proof for $ZFC^-$ is the same.
\end{proof}

\begin{lemma}\label{ptlAbsolute}
    Suppose $f$ is absolutely $\Delta_1$ and persistently, $f: \HC \to \HC$; say $f$ is over the parameter $a_* \in \HC$. Suppose $\lambda$ is a regular uncountable cardinal, and $V \preceq H(\lambda)$ is a transitive elementary submodel containing $a_*$. Finally suppose $P \in V$ is a forcing notion, and $H$ is $P$-generic over $V$, living in some forcing extesion $\mathbb{V}[G]$ of $\mathbb{V}$.

    Consider $f_{\ptl}^{\mathbb{V}[G]}$,  namely $f_{\ptl}$ as computed in $\mathbb{V}[G]$ (so $f_{\ptl} = f_{\ptl}^{\mathbb{V}[G]} \restriction_{\mathbb{V}}$). Then we have: $V[H]$ is closed under $f_{\ptl}^{\mathbb{V}[G]}$.
\end{lemma}
\begin{proof}
    Let $\phi(x)$ and $\psi(x)$ be the $\Sigma_1$ and $\Pi_1$ definitions of $f$, so we are assuming $\phi(x)$ and $\psi(x)$ are over $a_*$.
    
    Let $\Gamma$ be the fragment of set theory over the parameter $a_*$ asserting of its putative model $W$ that $W \models ZFC^-$ and, for every forcing extension $W[H]$, $\phi(x)$ and $\psi(x)$ coincide on $\HC^{W[H]}$ and define the graph of a function with domain $\HC^{W[H]}$. This is a first-order assertion of $W$ by Lemma~\ref{ForcingLemma1}.

    Our hypotheses on $\phi(x), \psi(x), f$ imply that $\mathbb{V} \models \Gamma$. Lemma~\ref{AmenableTechnical} implies then that $H(\lambda) \models \Gamma$. By elementarity, then, $V \models \Gamma$. Since every forcing extension of $V[H]$ is itself a forcing extension of $V$, we get that $V[H] \models \Gamma$.  

So after passing from $\mathbb{V}$ to $\mathbb{V}[G]$, it suffices to show that if $W \subset \mathbb{V}$ is a transitive model of $\Gamma$, then $W$ is closed under $f_{\ptl}$. Let $a \in W$. Let $Q \in W$ be a forcing notion which makes $a$ hereditarily countable. Let $K$ be $Q$-generic over $\mathbb{V}$. Now by definition of $\Gamma$ we have that there is some $b \in \HC^{W[K]}$ with $\HC^{W[K]} \models \phi(a, b)$. Thus $\HC^{\mathbb{V}[K]} \models \phi(a, b)$. By definition of ptl, this $b$ must be equal to $f_{\ptl}(a)$ as computed in $\mathbb{V}$. Thus $f_{\ptl}(a) \in W[K]$. But then let $K'$ be such that $K \times K'$ is $P \times P$-generic over $W$; by a symmetric argument we get that $f_{\ptl}(a) \in W[K']$, hence it lies in $W[K] \cap W[K'] = W$ by Lemma~\ref{ProductForcing}.
\end{proof}

\section{Independence Lemmas}\label{DensityLemmas}

This is a second technical section, in which we prove some needed facts for Theorem~\ref{FatModelsExist1}. The idea there is that given some transitive $V \models ZFC^-$ with $\alpha, \lambda \in V$, we wish to construct some large $A \in \mathcal{P}^\alpha(\lambda)$, such that $A$ lies in a forcing extension of $V$; roughly speaking, this  means that $A$ does not code anything terrible about $V$, e.g. a bijection between $V$ and $\omega$.

The following definition and theorem form the combinatorial underpinnings of our approach.

\begin{definition}
	Suppose $Y \subseteq \mathcal{P}(X)$. By a {\em finite Boolean combinatio}n from $Y$ we mean a set of the form $a_0 \cap \ldots \cap a_{n-1} \cap (X \backslash b_0) \cap \ldots \cap (X \backslash b_{m-1})$, for some $a_0, \ldots, a_{n-1}, b_0, \ldots, b_{m-1}$ from $Y$ with each $a_i \not= b_j$. $Y$ is {\em independent over} $X$ if and only if each finite Boolean combination from $Y$ is nonempty.
\end{definition}

The following is a special case of a theorem of Engelking and Karlowicz \cite{Partitions}.
\begin{lemma}\label{Comb1}
	Suppose $\kappa$ is an infinite cardinal. Then there is $Y \subseteq \mathcal{P}(\kappa)$ which is independent over $\kappa$ with $|Y| = 2^{\kappa}$.
\end{lemma}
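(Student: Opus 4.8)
\textbf{Plan for the proof of Lemma~\ref{Comb1}.}

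The plan is to realize the independent family directly using a standard "coding by finite functions" trick. Let $I$ be the set of all pairs $(s, \mathcal{S})$ where $s \in [\kappa]^{<\omega}$ is a finite subset of $\kappa$ and $\mathcal{S} \subseteq \mathcal{P}(s)$ is a collection of subsets of $s$; since $\kappa$ is infinite, $|I| = \kappa$, so it suffices to produce $Y \subseteq \mathcal{P}(I)$ independent over $I$ with $|Y| = 2^\kappa$. For each subset $A \subseteq \kappa$, define $y_A = \{(s, \mathcal{S}) \in I : A \cap s \in \mathcal{S}\}$, and set $Y = \{y_A : A \subseteq \kappa\}$.

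The two things to check are that $|Y| = 2^\kappa$ and that $Y$ is independent. For the cardinality, I would show the map $A \mapsto y_A$ is injective: if $A \neq B$, pick $\gamma$ in their symmetric difference, take $s = \{\gamma\}$ and $\mathcal{S} = \{\{\gamma\}\}$; then $(s,\mathcal{S})$ distinguishes $A$ from $B$ (it lies in exactly one of $y_A$, $y_B$), so $y_A \neq y_B$, giving $|Y| = 2^\kappa$. For independence, fix distinct $A_0, \ldots, A_{n-1}, B_0, \ldots, B_{m-1} \subseteq \kappa$; I must produce $(s, \mathcal{S}) \in I$ lying in every $y_{A_i}$ and in no $y_{B_j}$. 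Choose $s$ finite large enough that $A_i \cap s$ are pairwise distinct, each distinct from every $B_j \cap s$, and the $B_j \cap s$ are pairwise distinct — possible because all the original sets are distinct, so finitely many witnessing elements suffice. Then let $\mathcal{S} = \{A_i \cap s : i < n\}$. By construction $A_i \cap s \in \mathcal{S}$ for all $i$, so $(s,\mathcal{S}) \in \bigcap_i y_{A_i}$; and since $B_j \cap s \neq A_i \cap s$ for all $i$, we get $B_j \cap s \notin \mathcal{S}$, so $(s,\mathcal{S}) \notin y_{B_j}$ for all $j$. Hence the finite boolean combination $\bigcap_i y_{A_i} \cap \bigcap_j (I \setminus y_{B_j})$ is nonempty.

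I do not anticipate a serious obstacle here; the only point requiring a little care is arranging $s$ so that all the relevant traces $A_i \cap s$ and $B_j \cap s$ are pairwise distinct — but since the families $\{A_i\}$ and $\{B_j\}$ consist of finitely many pairwise-distinct sets, for each pair one can pick a single separating element, and $s$ is the (finite) union of these witnesses. After transporting back along a bijection $\kappa \cong I$, this yields the desired $Y \subseteq \mathcal{P}(\kappa)$ independent over $\kappa$ with $|Y| = 2^\kappa$, which is exactly the statement (and matches the Engelking--Karłowicz result in the case of a $2$-element "partition alphabet").
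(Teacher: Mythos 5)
Your proposal is correct and is essentially the paper's own proof: the paper indexes the family by functions $F \in 2^{2^\kappa}$ that factor through a finite restriction $g \mapsto \overline{f}(g\restriction_s)$, which is exactly your pair $(s,\mathcal{S})$ with $\mathcal{S} = \overline{f}^{-1}(1)$, and its set $Y_f = \{\alpha : F_\alpha(f)=1\}$ is your $y_A$ under an enumeration of $I$ by $\kappa$. The independence verification (separate the finitely many distinct sets by their traces on a suitable finite $s$) is the same in both.
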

\begin{proof}Choose $D \subseteq 2^{2^\kappa}$ of size $\kappa$ such that for each $s \in [\kappa]^{<\aleph_0}$ and each $\overline{f}: 2^s \to 2$, there is some $F \in D$ such that for all $g \in 2^\kappa$, $F(g) = \overline{f}(g \restriction_s)$.
	
	Write $D = \{F_\alpha: \alpha < \kappa\}$. For each $f \in 2^\kappa$ put $Y_f = \{\alpha < \kappa: F_\alpha(f) = 1\} \subseteq \kappa$. Let $Y = \{Y_f: f \in 2^\kappa\}$. We claim this works; clearly $|Y| = 2^\kappa$. Moreover, given $(f_i: i < i_*), (g_j: j < j_*)$ sequences of distinct elements from $2^\kappa$ with $i_*, j_* < \omega$, we can choose $s \in [\kappa]^{<\aleph_0}$ such that $f_i \restriction_s$, $g_j \restriction_s$ are all distinct. Then choose $\overline{f}: 2^s \to 2$ so that each $\overline{f}(f_i \restriction_s) = 1$, each $\overline{f}(g_j \restriction_s) = 0$. By choice of $D$ applied to $\overline{f}$, there is some $\alpha < \kappa$ such that $F_\alpha(f_i) = 1$ and $F_\alpha(g_j) = 0$ for $i < i_*, j < j_*$; i.e. $\alpha \in Y_{f_i}$ for $i < i_*$ and $\alpha \not \in Y_{g_j}$ for $j < j_*$. This suffices to show independence.
\end{proof}

We now wish to strengthen this. Some definitions will explain what we want:

\begin{definition}
	\begin{itemize}
		\item Suppose $X$ is a topological space. Then $X$ is $\kappa$-{\em nice} if $X$ has a basis of cardinality (at most) $\kappa$, and every nonempty open subset of $X$ has size $\kappa$. (In particular, $|X| = \kappa$.) 
		\item If $X$ is a topological space and $D \subseteq X$, then say that $D$ is $\kappa$-{\em dense} in $X$ if whenever $\mathcal{O} \subseteq X$ is open nonempty, then $|D \cap \mathcal{O}| \geq \kappa$.

		\item Suppose $X$ is a topological space and $Y \subseteq \mathcal{P}(X)$. Then $Y$ is {\em densely independent} if every finite Boolean combination from $Y$ is dense in $X$. Equivalently, for each nonempty open subset $\mathcal{O}$ of $X$, every finite Boolean combination from $Y$ intersects $\mathcal{O}$. 
		
	\end{itemize}
\end{definition}
A routine diagonalizing argument shows that if $X$ is $\kappa$-nice, then we can write $X$ as the disjoint union of $(X_\alpha: \alpha < \kappa)$, where each $X_\alpha$ is dense in $X$ and $\kappa$-nice.

Now we massage Lemma~\ref{Comb1} into the form we want.

\begin{theorem}\label{Comb}
	Suppose $X$ is $\kappa$-nice. Give $\mathcal{P}(X)$ the finite support topology. Then there is a sequence $(Y_\delta: \delta <  2^\kappa)$ of disjoint subsets of $\mathcal{P}(X)$ such that each $Y_\delta$ is $2^\kappa$-dense in $\mathcal{P}(X)$, and $\bigcup_\delta Y_\delta$ is densely independent over $X$.
\end{theorem}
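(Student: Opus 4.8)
\section*{Proof plan for Theorem~\ref{Comb}}

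The plan is to reduce the statement to the construction of a single well-behaved family and then carry out that construction by ``massaging'' Lemma~\ref{Comb1} in the way the surrounding text suggests.

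\textbf{Step 1: reduction to one family.} First I would observe that it suffices to produce a single family $\mathcal{A}\subseteq\mathcal{P}(X)$ with $|\mathcal{A}|=2^\kappa$ which is densely independent over $X$ and which meets every nonempty basic open subset of $\mathcal{P}(X)$ in a set of size $2^\kappa$. Indeed, since $|[X]^{<\omega}|=\kappa$, the space $\mathcal{P}(X)$ (finite support topology) has only $\kappa$ basic open sets, say $\{\mathcal{O}_j:j<\kappa\}$, and by hypothesis each $T_j:=\mathcal{A}\cap\mathcal{O}_j$ has size $2^\kappa$. A bookkeeping recursion of length $2^\kappa$ indexed by triples $(\delta,j,\beta)\in 2^\kappa\times\kappa\times 2^\kappa$ then builds a partition $\mathcal{A}=\bigsqcup_{\delta<2^\kappa}Y_\delta$: at stage $(\delta,j,\beta)$ throw an as-yet-unassigned element of $T_j$ into $Y_\delta$ (possible since fewer than $2^\kappa$ elements have been assigned and $|T_j|=2^\kappa$), and dump all leftovers into $Y_0$. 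Each $Y_\delta$ then meets each $\mathcal{O}_j$ in $2^\kappa$ points, hence is $2^\kappa$-dense in $\mathcal{P}(X)$; and $\bigcup_\delta Y_\delta=\mathcal{A}$ is densely independent over $X$ by construction.

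\textbf{Step 2: building $\mathcal{A}$.} I would use the ``routine diagonalizing argument'' quoted just before the theorem to write $X=\bigsqcup_{\eta<\kappa}X_\eta$ with each $X_\eta$ dense in $X$ and $\kappa$-nice, so each $X_\eta$ meets every nonempty open subset of $X$ and has size $\kappa$. Any finite $F\subseteq X$ lies in finitely many of the $X_\eta$, which reduces the ``$2^\kappa$-dense in $\mathcal{P}(X)$'' requirement to the corresponding requirement within each $X_\eta$; and dense independence over $X$ will be checked by restricting a given finite Boolean combination to a single $X_\eta$ meeting the prescribed open set. The heart of the matter is therefore to produce, over a $\kappa$-nice space, a family of size $2^\kappa$ that is simultaneously (i) densely independent, (ii) such that \emph{every} finite Boolean combination of distinct members has full size $2^\kappa$ (not merely is nonempty/dense), and (iii) product-dense in $\mathcal{P}$ of the space. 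This is obtained by running the Engelking--Karłowicz construction behind Lemma~\ref{Comb1} over the index set $\kappa$, but with two modifications: precompose the witnessing functions with an enumeration of a basis of the space so that plain independence over $\kappa$ is upgraded to dense independence, and take every witnessing function with multiplicity $2^\kappa$ so that each finite Boolean combination inherits full cardinality. Gluing these component families coordinatewise over $\eta<\kappa$ then yields $\mathcal{A}$.

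\textbf{Main obstacle.} The delicate point is the tension between ``densely independent over $X$'' and ``$2^\kappa$-dense in $\mathcal{P}(X)$'': a crude gluing of merely independent component families fails, because two distinct glued sets can agree on a coordinate $X_\eta$ and then a Boolean combination using them with opposite signs collapses on that coordinate, and one cannot in general find a single coordinate free of all such clashes. This is exactly why the component families must be genuinely \emph{thick} — every finite Boolean combination of full size $2^\kappa$ inside every nonempty open set — and why a direct strengthening of Lemma~\ref{Comb1}, rather than a black-box application of it, is needed; arranging this thickness together with product-density while keeping the family of size $2^\kappa$ is where essentially all the work lies. Once the thick family is in hand, the gluing over the $X_\eta$ and the partition recursion of Step~1 are routine.
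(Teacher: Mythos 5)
Your Step 1 reduction is fine, but Step 2 --- where all the content lies --- has a genuine gap, and the obstacle you identify is not the real one. First, your requirement (ii) is impossible as stated: a finite Boolean combination from a family $Y\subseteq\mathcal{P}(X)$ is by definition a subset of $X$, and $|X|=\kappa<2^\kappa$, so no Boolean combination can have size $2^\kappa$. Second, the clash you worry about (``two distinct glued sets can agree on a coordinate'') simply does not arise if the component families are indexed bijectively by a common index set: if $\mathcal{A}_\eta=\{a^\eta_i:i<2^\kappa\}$ with $i\mapsto a^\eta_i$ injective and $a_i=\bigcup_\eta a^\eta_i$, then distinct glued sets have distinct traces on \emph{every} coordinate, so a Boolean combination of distinct glued sets restricts on each $X_\eta$ to a Boolean combination of distinct members of $\mathcal{A}_\eta$. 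No ``thickness'' is needed; the paper gets by with families on the pieces that are merely independent in the sense of Lemma~\ref{Comb1}.

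The step your plan genuinely fails to address is your own requirement (iii): making the glued family $2^\kappa$-dense in $\mathcal{P}(X)$. A basic open subset of $\mathcal{P}(X)$ is given by a pair of disjoint finite sets $(s,t)$ which in general meets several coordinates $X_\eta$ simultaneously, so even if each $\mathcal{A}_\eta$ is $2^\kappa$-dense in $\mathcal{P}(X_\eta)$, the set of indices $i$ realizing the required pattern on one coordinate need not meet the set realizing it on another; coordinatewise gluing gives no control over this coupling. The paper resolves it not by strengthening the component families but by perturbing the glued sets afterwards: enumerate all pairs $(s_\gamma,t_\gamma)$ with $2^\kappa$ repetitions and replace $b^\gamma_\delta$ by $c^\gamma_\delta=(b^\gamma_\delta\cup s_\gamma)\setminus t_\gamma$, which yields $2^\kappa$-density of each $Y_\delta$ outright. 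Dense independence must then be re-verified by finding a coordinate disjoint from the finitely many perturbed supports, and this is exactly why the paper places the component family for coordinate $\alpha$ inside $\mathcal{U}_\alpha\cap X_\alpha$, where $(\mathcal{U}_\alpha:\alpha<\kappa)$ enumerates a basis of $X$ with each basic open set recurring $\kappa$-often: for any target open set and any finite Boolean combination one can still find an unperturbed coordinate witnessing nonemptiness inside that open set. Some device of this kind is missing from your plan, and without it the construction does not close.
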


\begin{proof}
	Write $X$ as the disjoint union $(X_\alpha: \alpha < \kappa)$, where each $X_\alpha$ is $\kappa$-nice, and dense in $X$. Let $(\mathcal{O}_\alpha: \alpha < \kappa)$ be a basis of $X$, and let $(\mathcal{U}_\alpha: \alpha < \kappa)$ be an enumeration of $(\mathcal{O}_\alpha: \alpha < \kappa)$ in which each $\mathcal{O}_\alpha$ occurs $\kappa$-often.

	By Lemma~\ref{Comb1}, for each $\alpha < \kappa$ we can choose $Y'_\alpha \subseteq \mathcal{P}(\mathcal{U}_\alpha \cap X_\alpha)$ with $|Y'_\alpha| = 2^\kappa$, such that $Y'_\alpha$ is independent over $\mathcal{U}_\alpha \cap X_\alpha$. Enumerate $Y'_\alpha = \{b^{\gamma, \alpha}_{\delta}: \gamma, \delta < 2^\kappa\}$. For each $\gamma, \delta < 2^\kappa$, let $b^\gamma_\delta = \bigcup_{\alpha < \kappa} b^{\gamma, \alpha}_{\delta}$ and let $Y' = \{b^\gamma_\delta: \gamma, \delta < 2^\kappa\}$. Then each finite Boolean combination from $Y'$ intersects each $\mathcal{U}_\alpha \cap X_\alpha$ (this is the property we need of $Y'$ going forth).
	
	Let $(s_\gamma, t_\gamma: \gamma< 2^\kappa)$ enumerate all pairs of disjoint finite subsets of $X$, such that each pair occurs $2^\kappa$-many times. Let $E_\gamma \in [\kappa]^{<\aleph_0}$ be large enough so that $s_\gamma \cup t_\gamma \subseteq \bigcup_{\alpha \in E_\gamma} X_\alpha$. For each $\delta, \gamma < 2^\kappa$ let $c_{\delta}^\gamma = (b_{\delta}^\gamma \cup s_\gamma) \backslash t_\gamma$. Let $Y_{\delta} = \{c^\gamma_{\delta}: \gamma < 2^{\kappa}\}$. 
	
	We claim that $(Y_\delta: \delta < 2^\kappa)$ works. Each $Y_\delta$ is $2^\kappa$-dense in $\mathcal{P}(X)$ by choice of $(s_\gamma, t_\gamma: \gamma < 2^\kappa)$. 
	
	We check that $Y_\delta \cap Y_{\delta'} = \emptyset$ whenever $\delta \not= \delta'$. Indeed, suppose $c = c_{\delta}^\gamma \in Y_{\delta}$ and $c' = c_{\delta'}^{\gamma'} \in Y_{\delta'}$. Write $E = E_{\gamma} \cup E_{\gamma'}$. Choose $\alpha \in \kappa \backslash E$. Then $c \cap X_\alpha = b^\gamma_{\delta} \cap X_\alpha$ and $c' \cap X_\alpha = b^{\gamma'}_{\delta'} \cap X_\alpha$. Since $(b^\gamma_{\delta} \backslash b^{\gamma'}_{\delta'}) \cap X_\alpha  \not= \emptyset$ we conclude $c \not= c'$.
	
	Finally, suppose $d := \bigcap_{i < i_*} (c_i)^{\pm 1}$ is a finite Boolean combination from $\bigcup_\delta Y_\delta$ and suppose $\beta < \kappa$; we show that $d \cap \mathcal{O}_\beta$ is nonempty. For each $i < i_*$, choose $\delta_i,  \gamma_i$ with $c_i = c_{\delta_i}^{\gamma_i}$. Let $E = \bigcup_{i < i_*} E_{\gamma_i}$. Choose $\alpha \in \kappa \backslash E$ such that $\mathcal{U}_\alpha = \mathcal{O}_\beta$. Note that each $c_i \cap X_\alpha = b^{\gamma_i}_{\delta_i} \cap X_\alpha$. By choice of $Y'$, $\mathcal{U}_\alpha \cap X_\alpha \cap \bigcap_{i < i_*} (b^{\gamma_i}_{\delta_i})^{\pm 1}\not= \emptyset$. Hence $\mathcal{U_\alpha} \cap d \not= \emptyset$, and $\bigcup_\delta Y_\delta$ is densely independent.
\end{proof}

We now apply this towards constructing large subsets of iterated powersets that are generic in some sense. The following definition is complicated somewhat by the fact that we want the $X_\alpha$'s to be disjoint.

\begin{definition}
    Let $\mathcal{P}_{\not= \emptyset}(X)$ denote the class of nonempty subsets of $X$.
    
    Suppose $\alpha_* \geq 1$. Then say that $\overline{X} := (X_\alpha: \alpha < \alpha_*)$ is an $\alpha_*$-{\em powerset sequence} if 

    \begin{enumerate}
        \item $X_0$ is a nonempty set, such that all elements of $X_0$ have the same foundation rank;
        \item For each $\beta$ with $\beta+1 < \alpha_*$, $X_{\beta+1} \subseteq \mathcal{P}_{\not= \emptyset}(X_\beta)$;
        \item For limit $\delta$, $X_{\delta} \subseteq \prod_{\beta < \delta} \mathcal{P}_{\not= \emptyset}(X_\beta)$;
        \item For limit $\delta$, for all $a, b \in X_\delta$, the set of all $\beta$ with $a(\beta) = b(\beta)$ is an ordinal.
    \end{enumerate}
    If $\lambda$ is a regular cardinal, then say that $\overline{X}$ is a $(\lambda, \alpha_*)$-powerset sequence if additionally $X_0 \subseteq \mathcal{P}(\lambda)$.
   \end{definition}

   \begin{lemma}
       Suppose $\overline{X}$ is an $\alpha_*$-powerset sequence. Then the $X_\alpha$'s are all disjoint.
   \end{lemma}
   \begin{proof}
       Let $\gamma$ be the foundation rank of some or any element of $X_0$. Then for all $\alpha < \alpha_*$, any element of $X_\alpha$ has foundation rank $\gamma + \alpha$, so we are done.
   \end{proof}

    We introduce some topologies. We warn the reader that $\mathcal{P}(\lambda)$ carries the $<\lambda$-support topology, while $\mathcal{P}(X_\alpha)$ carries the finite support product topology. 
    
    \begin{definition}
    Suppose $\overline{X}$ is a $(\lambda, \alpha_*)$ powerset-sequence. Then for $\alpha < \alpha_*$ we define a topology $\tau_\alpha$ on $X_\alpha$. Namely, let $\tau_0$ be the $<\lambda$-support product topology coming from considering $X_0 \subseteq 2^\lambda$. For all $\beta$ with $\beta+1 < \alpha_*$, $\tau_{\beta+1}$ is the finite-support product topology from considering $X_{\beta+1} \subseteq 2^{X_\beta}$. For all limit $\delta$, $\tau_\delta$ is the tree topology whose basic open sets are $\mathcal{O}_{a, \beta}$ for $a \in X_\delta$ and $\beta < \delta$, where $\mathcal{O}_{a, \beta} := \{b \in X_\delta: b(\beta) = a(\beta)\}$.

    Let $\Omega_{\alpha_*}$ denote the set of all pairs $\beta < \alpha < \alpha_*$ such that either $\alpha = \beta+1$ or else $\alpha$ is a limit ordinal. Suppose $\overline{X}$ is a $(\lambda, \alpha_*)$-powerset sequence and $(\beta, \alpha) \in \Omega_{\alpha_*}$. We define $B_{\beta \alpha} \subseteq \mathcal{P}(X_\beta)$ as follows: if $\alpha= \beta+1$, then $B_{\beta \alpha} := X_\alpha$. If $\alpha = \delta$ is a limit, then $B_{\beta \delta} := \{a(\beta): a \in X_\delta\}$. Given $\beta < \beta' <\delta$ and $b \in B_{\beta \delta}, b' \in B_{\beta' \delta}$, say that $b \prec b'$ if there is some $a \in X_\delta$ with $a(\beta) = b, a(\beta') = b'$. This turns $\bigcup_{\beta < \delta} B_{\beta \delta}$ into a tree of height $\delta$; its $\beta$'th level is $B_{\beta \delta}$. 

   Suppose that $\overline{X}$ is a $(\lambda, \alpha_*)$-powerset sequence. Then say that $\overline{X}$ is {\em rich} if:

    \begin{enumerate}
        \item $X_0$ is $2^\lambda$-dense in $\mathcal{P}(\lambda)$;
        \item For each $\beta < \alpha_*$, $(B_{\beta \alpha}: (\beta, \alpha) \in \Omega_{\alpha_*})$ are pairwise disjoint, and their union is densely independent over $X_\beta$;
        \item Whenever $\beta+1 < \alpha_*$, $B_{\beta \beta+1}$ is $2^{|X_\beta|}$-dense in $\mathcal{P}(X_\beta)$ (with the finite support product topology);
        \item Whenever $\beta < \delta< \alpha_*$ with $\delta$ limit, and whenever $b_{\beta'}: \beta' < \beta$ is a sequence such that $b_{\beta'} \in B_{\beta' \delta}$ and $\beta' < \beta''$ implies $b_{\beta'} \prec b_{\beta''}$, then the set of all $b \in B_{\beta \delta}$ with each $b_\beta \prec b'$ is $2^{|X_\beta|}$-dense in $\mathcal{P}(X_\beta)$ (with finite support product topology);
        \item Whenever $\delta < \alpha_*$ is a limit ordinal, and whenever $(b_\beta: \beta < \delta)$ is a branch through $\bigcup\{B_{\beta \delta}: \beta < \delta\}$, then there is $a \in X_\delta$ with each $a(\beta) = b_\beta$.
    \end{enumerate}
    \end{definition}

    \begin{lemma}
        Suppose $\overline{X}$ is a rich $(\lambda, \alpha_*)$-powerset sequence. Then each $|X_\alpha| = \beth_{\alpha+1}(\lambda)$.
    \end{lemma}
    \begin{proof}
        Straightforward induction.
    \end{proof}
    
     \begin{lemma}\label{RichExists}
        Suppose $\lambda$ is a regular cardinal and $\alpha_* < \lambda^+$. Suppose $X_0 \subseteq \mathcal{P}(\lambda)$ is $2^\lambda$-dense in $\mathcal{P}(\lambda)$ (with the $<\lambda$-support product topology), and each $a \in X_0$ is cofinal in $\lambda$. Then there is a rich $(\lambda, \alpha_*)$-powerset sequence $\overline{X}$ extending $X_0$. 
     \end{lemma}
     \begin{proof}
        All elements of $X_0$ have the same foundation rank, namely $\lambda$, since they are all cofinal in $\lambda$.

        We build $\overline{X}$ inductively, but we will need extra data on the side. At stage $\alpha$, we will have constructed a rich $(\lambda, \alpha)$-powerset sequence $(X_\beta: \beta < \alpha)$, along with $(Y_{\beta, \gamma, i}: \beta < \alpha, \gamma < \alpha_*, i < 2^{|X_\beta|})$ with each $Y_{\beta, \gamma, i} \subseteq \mathcal{P}(X_\beta)$ and each $Y_{\beta, \gamma, i}$ is $2^{|X_\beta|}$ dense in $\mathcal{P}(X_\beta)$. We will further have that for all $\beta$, $(Y_{\beta, \gamma, i}: \gamma < \gamma_*, i < 2^{|X_\beta|})$ is pairwise disjoint and its union is densely independent over $X_\beta$. Finally we will have arranged that for all $\beta+1 < \alpha$, $B_{\beta, \beta+1} = Y_{\beta, \beta+1, 0}$ and for all $\beta < \delta < \alpha$ with $\delta$ limit, $B_{\beta \delta} = \bigcup\{Y_{\beta, \delta, i}: i < 2^{|X_\beta|}\}$.
        
        At stage $\alpha = 0$ we just need to produce $(Y_{0, \gamma, i}: \gamma < \alpha_*, i < 2^{|X_0|})$, which is possible by Theorem~\ref{Comb} and the fact that $X_0$ is $2^\lambda$-nice, since $X_0$ is $2^\lambda$-dense in $\mathcal{P}(\lambda)$.
        
        Suppose we are at stage $\alpha < \alpha_*$. We need to define $X_\alpha$ and $Y_{\alpha, \gamma, i}: \gamma < \alpha_*, i < 2^{|X_\alpha|}$.
        
        If $\alpha = \beta+1$ then put $X_\alpha = Y_{\beta, \beta+1, 0}$. Then since $Y_{\beta, \beta+1, 0}$ is $2^{|X_\beta|}$-dense in $\mathcal{P}(X_\beta)$, $X_\alpha$ is $|X_\alpha|$-nice. Thus, we can apply Theorem~\ref{Comb} to get $Y_{\alpha, \gamma, i}: \gamma < \alpha_*, i < 2^{|X_\alpha|}$.

        So suppose instead $\alpha = \delta$ is a limit. Let $B_{\beta \delta}$ denote $\bigcup \{Y_{\beta \delta i}: i < 2^{|X_\beta|}\}$. We turn $B := \bigcup_{\beta < \delta} B_{\beta \delta}$ into a tree under $\prec$, satisfying the following conditions:

        \begin{itemize}
            \item The $\beta$'th level of $B$ is $B_{\beta \delta}$;
            \item For all $\beta < \delta$ and for all $\prec$-branches $(b_{\beta'}: \beta' < \beta)$ with $b_{\beta'} \in B_{\beta' \delta}$, there is some $i < 2^{|X_\beta|}$ such that every element of $Y_{\beta \delta i}$ extends $(b_{\beta'}: \beta' < \beta)$ under $\prec$.
        \end{itemize}

        To see this is possible, we just need to show $2^{|X_\beta|} \geq \prod_{\beta' < \beta} 2^{|X_{\beta'}|}$, which is true, using $\beta < \alpha_* < \lambda^+ \leq 2^{|X_0|} \leq |X_\beta|$.

        Now let $X_\delta$ be the set of all $\prec$-branches through $B$, i.e. all $b \in \prod_{\beta < \delta} B_{\beta \delta}$ such that $b(\beta) \prec b(\beta')$ for all $\beta < \beta' < \delta$. Some easy cardinal arithmetic shows that $|X_\delta| = \prod_{\beta < \delta} 2^{|X_\beta|}$ and that $X_\delta$ is $|X_\delta|$-nice, so we can apply Theorem~\ref{Comb} to find $(Y_{\delta, \gamma, i}: \gamma< \alpha_*, i < 2^{|X_\delta|})$ and continue.
     \end{proof}

    \begin{lemma}\label{PsiCat2}
        There exists $f \in \mathbb{F}$ such that the following holds. Suppose $A$ is $f$-closed (i.e. transitive, closed under $f_{\ptl}$, and closed under finite sequences). Suppose $\lambda \in A$ is a regular cardinal, and $\overline{X} \in A$ is a rich $(\lambda, \alpha_*)$-powerset sequence. Then $|\mathcal{P}^{\alpha_*}(\lambda) \cap A| = \beth_{\alpha_*}(\lambda)$.
    \end{lemma}
    \begin{proof}
        We must first explain how $f$ acts on hereditarily countable inputs; then $f_{\ptl}$ will be checked to have the required properties. Let $Y$ denote the class of all hereditarily countable $\overline{X}$ such that for some $\alpha_* < \omega_1$, $\overline{X}$ is an $\alpha_*$-powerset sequence with $X_0 \subseteq \mathcal{P}_{\aleph_1}(\omega_1)$. Then $Y$ is absolutely $\Delta_1$ and $Y_{\ptl}$ contains every $(\lambda, \alpha_*)$-powerset sequence.
    
        Suppose that $\overline{X} \in Y$. Write $\gamma := \bigcup X_0$. We inductively define a sequence $(F_\alpha: 0 < \alpha \leq \alpha_*)$ such that each $F_{\alpha+1}$ is an injection of $X_\alpha$ into $\mathcal{P}^{\alpha+1}_{\aleph_1}(\gamma)$, and such that for limit $\delta$, $F_\delta$ is an injection from $\bigcup\{X_\alpha: \alpha < \delta\}$ into $\mathcal{P}^\delta_{\aleph_1}(\gamma)$. 
        
        Let $F_1$ be the identity inclusion of $X_0$ into $\mathcal{P}_{\aleph_1}(\omega_1)$. For limit $\delta$, recall that $\mathcal{P}^\delta_{\aleph_1}(\gamma)$ is the disjoint union $\bigcup_{\alpha < \delta} \{\alpha\} \times \mathcal{P}^{\alpha}_{\aleph_1}(\gamma)$, so for $a \in X_\alpha$ with $\alpha < \delta$ let $F_\delta(a) = (\alpha+1, F_{\alpha+1}(a))$.

        Suppose $\alpha = \beta+1$ where $\beta$ is a successor ordinal, say $\beta = \beta'+1$. We want to define an injection $F_\alpha: X_\beta \to \mathcal{P}^\alpha_{\aleph_1}(\gamma)$. Now $X_\beta \subseteq \mathcal{P}_{\aleph_1}(X_{\beta'})$ so put $F_\alpha(a) = \{F_\beta(b): b \in a \}$ for each $a \in X_\beta$. 
        
        Finally suppose $\alpha = \delta+1$ for some limit $\delta$. Then given $a \in X_\delta$, we have $a \in \prod_{\beta < \delta} \mathcal{P}_{\aleph_1}(X_\beta)$. Let $F_{\delta+1}(a) = \{F_\delta(b): b \in \bigcup_{\beta < \delta} a(\beta)\}$.

        Define $f(\overline{X}) = F_{\alpha_*}$. Then easily $f$ is an absolutely $\Delta_1$ map, such that persistently, for any $\alpha_*$-powerset sequence $\overline{X} \in Y$ with $X_0 \subseteq \mathcal{P}(\gamma)$, if $\alpha_*$ is a limit ordinal then $f(\overline{X})$ is an injection from $\overline{X}$ into $\mathcal{P}^{\alpha_*}_{\aleph_1}(\gamma)$, and if $\alpha_* = \alpha+1$ then $f(\overline{X})$ is an injection from $X_{\alpha}$ into $\mathcal{P}^{\alpha_*}_{\aleph_1}(\gamma)$.

        Thus $f \in \mathbb{F}$ is as desired.
    \end{proof}

\section{Powerset Sequences as Structures} \label{TechnicalThree}

   We shall need to view the powerset sequences constructed in the last section as structures in a suitable sense. We explain how.
   

\begin{definition}
	Suppose $\alpha_* \geq 1$ is an ordinal. Recall that $\Omega_{\alpha_*}$ is the set of all pairs $(\beta, \delta)$ where $\beta < \delta < \alpha_*$ and $\delta$ is a limit ordinal, along with all pairs $(\beta, \beta+1)$ where $\beta+1 < \alpha_*$.

We aim to define a sentence $\Psi_{\alpha_*}$ describing certain $\alpha_*$-powerset sequences. We first describe a universal sentence $\Psi_{\alpha_*}^{\forall}$; $\Psi_{\alpha_*}$ will then describe sufficiently generic models of $\Psi_{\alpha_*}^{\forall}$.
	
	Let the language of $\Psi_{\alpha_*}$ consist of sorts $(U_\alpha: \alpha < \alpha_*)$, a binary relation $R$, and binary relations $E_{\beta, \delta} \subseteq U_\delta \times U_\delta$ for each $(\beta, \delta) \in \Omega_{\alpha_*}$. 
	
	Let $\Psi_{\alpha_*}^{\forall}$ be the universal sentence of $\mathcal{L}_{|\alpha_*|^+ \omega}$ asserting: 
	
	\begin{enumerate}
		\item $(U_\alpha: \alpha < \alpha_*)$ are disjoint and partition the universe.
		\item $R \subseteq \bigcup_{(\beta, \alpha) \in \Omega_{\alpha_*}} U_\beta \times U_\alpha$.
		\item For all $(\beta, \alpha) \in \Omega_{\alpha_*}$ and for all $a, b \in U_\alpha$, if $a E_{\beta \alpha} b$ then for every $c \in U_\beta$, $c R a$ if and only if $c R b$.
		\item For all $\alpha < \alpha_*$ successor ordinal, say $\alpha = \beta+1$, we have $E_{\beta \alpha}$ is discrete.
            \item For all $\delta < \alpha_*$ limit and for all distinct $a, b \in U_\delta$, there is some $\beta < \delta$ such that for all $ \beta' < \delta$, $a E_{\beta' \delta} b$ if and only if $\beta' < \beta$. We write $a \Delta b = \beta$ to indicate this.
	\end{enumerate}

 Let $\Psi_{\alpha_*}$ assert additionally of its model $M$ that given any $\overline{a} \in M^{<\omega}$ and any finite $N \models \Psi_{\alpha_*}^{\forall}$ extending $\overline{a}$, there is some isomorphic embedding of $N$ into $M$ over $\overline{a}$. 
\end{definition}
Some simple remarks:
\begin{remark}
In the definition of $\Psi_{\alpha_*}$, it is enough to consider the case where $N$ is a one-point extension of $\overline{a}$.

    Also, suppose $M \models \Psi_{\alpha_*}$. Then each $U_\alpha^M$ is infinite, and further, for all $(\beta, \alpha) \in \Omega_{\alpha_*}$ and for all $a, b \in U_\alpha$, $a E_{\beta \alpha} b$ of and only if for every $c \in U_\beta$, $c R a$ if and only if $c R b$.
\end{remark}

 \begin{lemma}\label{ExtraConditions}
     Suppose $M \models \Psi_{\alpha_*}^{\forall}$. Then $M \models \Psi_{\alpha_*}$ if and only if the following conditions both hold.
 \begin{enumerate}
		\item Suppose $\alpha < \alpha_*$ is not a limit ordinal. Suppose $u_0, u_1$ are disjoint finite subsets of $\bigcup\{U_\beta: (\beta, \alpha) \in \Omega_{\alpha_*}\}$ and suppose $v_0, v_1$ are disjoint finite subsets of $\bigcup\{U_\gamma: (\alpha, \gamma) \in \Omega_{\alpha_*}\}$, satisfying that for all $\gamma$ with $(\alpha, \gamma) \in \Omega_{\alpha_*}$ and for all $c_0, c_1$ with $c_i \in v_i \cap U_\gamma$, $c_0$ is not $E_{\alpha, \gamma}$-related to $c_1$. Then there are infinitely many $a \in U_\alpha$ such that $b R a$ for each $b \in u_0$, and $\lnot (b R a)$ for each $b \in u_1$, and $a R c$ for each $c \in v_0$, and $\lnot (a R c)$ for each $c \in v_1$.
		
		\item Suppose $\delta < \alpha_*$ is a limit, $\beta_* < \delta$, $d \in U_\delta$, $u_0, u_1$ are disjoint finite subsets of $\bigcup\{U_\beta: \beta_* \leq \beta < \delta\}$, and $v_0, v_1$ are disjoint finite subsets of $\bigcup\{U_\gamma: (\delta, \gamma) \in \Omega_{\alpha_*}\}$, satisfying that for all $\gamma$ with $(\delta, \gamma) \in \Omega_{\alpha_*}$ and for all $c_0, c_1$ with $c_i \in v_i \cap U_\gamma$, $c_0$ is not $E_{\delta, \gamma}$-related to $c_1$. Then there are infinitely many $E_{\beta_* \delta}$-inequivalent $a \in U_\delta$ such that $a \Delta d = \beta_*$, and $b R a$ for all $b \in u_0$, and $\lnot b R a$ for all $b \in u_1$, and $a R c$ for each $c \in v_0$, and $\lnot (a R c)$ for each $c \in v_1$.
	\end{enumerate}
\end{lemma}
\begin{proof}
Suppose first that $M \models \Psi_{\alpha_*}$; we show $M$ satisfies the second condition (the first is similar). Suppose $\delta, \beta_*, d, u_0, u_1, v_0, v_1$ are given. Let $\overline{a}$ be a list of $d, u_0, u_1, v_0, v_1$. Let $N$ consist of $\overline{a}$ and infinitely many new points $e_n: n < \omega$, which are all $E_{\beta_* \delta}$-inequivalent, and each $e_n \Delta d = \beta_*$, and $a R e_n$ for all $a \in u_0$, and $\lnot a R e_n$ for all $a \in u_1$, and $e_n R c$ for all $c \in v_0$, and $\lnot e_n R c$ for all $c \in v_1$. This specifies the isomorphism type of $N$, and clearly $N \models \Psi_{\alpha_*}^{\forall}$. Since every finite substructure of $N$ containing $\overline{a}$ can be embedded into $M$, we get that the second condition holds.

Suppose next that $M \models \Psi_{\alpha_*}^{\forall}$ and also satisfies the two conditions above. We show that $M \models \Psi_{\alpha_*}$. Suppose $\overline{a} \in M$ is a finite tuple and suppose $N \models \Psi^{\alpha_*}$ is a one-point extension of $\overline{a}$, say the new point is $e$. We handle the case where $e \in U_\delta$ for some limit $\delta$; the case where $e \in U_\alpha$ for $\alpha$ nonlimit is similar. Let $\beta_*$ be the maximum of $e \Delta d$ for $d \in U_\delta \cap \overline{a}$; let $d \in U_\delta \cap \overline{a}$ achieve this maximum. Let $u_0$ be the set of all $a \in \overline{a} \cap U_\beta$ for $\beta_* \leq \beta < \delta$ such that $a R e$, and similarly define $u_1, v_0, v_1$. Let $X$ be the set of all $e' \in M$ such that $e' \Delta d = \beta_*$ and $b R e'$ for all $b \in u_0$ and $\lnot b R e'$ for all $b \in u_1$ and $e' R c$ for all $c \in v_0$ and $\lnot e' R c$ for all $c \in v_1$. By condition 2, $X$ contains infinitely many $E_{\beta_* \delta}$-inequivalent elements, so we can choose some $e' \in X$ which is not $E_{\beta_* \delta}$-equivalent to anything in $\overline{a}$. Then $\overline{a} e \mapsto \overline{a} e'$ is an isomorphic embedding, as desired. 
\end{proof}

\begin{lemma}\label{Satisfiable0}
	Suppose $\alpha_*\geq 1$. Then $\Psi_{\alpha_*}$ is satisfiable. In fact, whenever $V$ is a transitive model of $ZFC^-$ with $\alpha_* \in V$, then $\Psi_{\alpha_*}$ has a model in $V$.
\end{lemma}
\begin{proof}
In $V$ construct an ascending sequence $(M_n: n < \omega)$ of models of $\Psi_{\alpha_*}^{\forall}$, such that at each stage we add witnesses to the two conditions of the preceding lemma.
\end{proof}

Moreover:

\begin{lemma}\label{PsiCat}
	Suppose $M, N \models \Psi_{\alpha_*}$. Then $M \equiv_{\infty \omega} N$; in fact, the set of all finite partial isomorphisms from $M$ to $N$ is a back-and-forth system. In particular, for $\alpha_* < \omega_1$, $\Psi_{\alpha_*}$ is $\aleph_0$-categorical.
\end{lemma}
\begin{proof}
    By the definition of $\Psi_{\alpha_*}$.  
\end{proof}

We now connect this with powerset sequences from the preceding section.

\begin{definition}
    Suppose $\overline{X}$ is an $\alpha_*$-powerset sequence. We construe it as an $\mathcal{L}(\Psi_{\alpha_*})$-structure as follows:

    \begin{enumerate}
        \item Each $U_\alpha = X_\alpha$;
        \item For each $\beta$ with $\beta+1 < \alpha_*$, and for all $b \in X_\beta$, $a \in X_\alpha$, we have $b R a$ if and only if $b \in a$;
        \item For each limit $\delta< \alpha_*$ and for each $\beta < \delta$, for all $b \in X_\beta$ and $a\in X_\delta$, we have $b R a$ if and only if $b \in a(\beta)$;
        \item For each $\beta$ with $\beta+1 < \alpha_*$, $E_{\beta \beta+1}$ is discrete;
        \item For each limit $\delta < \alpha_*$ and for each $\beta < \delta$, we have for all $a, b \in X_\delta$ that $a E_{\beta \delta} b$ if and only if $a(\beta) = b(\beta)$ (which implies $a(\beta') = b(\beta')$ for all $\beta' < \beta$).
    \end{enumerate}

    We tacitly identify $\overline{X}$ with this structure, i.e. we view $\overline{X}$ as an $\mathcal{L}(\Psi_{\alpha_*})$-structure.

    \end{definition}

    \begin{lemma}
        If $\overline{X}$ is an $\alpha_*$-powerset sequence then $\overline{X} \models \Psi_{\alpha_*}^{\forall}$.
    \end{lemma}
    \begin{proof}
        Straightforward.
    \end{proof}

    \begin{lemma}
        Suppose $M \models \Psi_{\alpha_*}$, and all elements of $U_0^M$ have the same foundation rank. Then there is a unique $\alpha_*$-powerset sequence $\overline{X}$ with $X_0 = U_0^M$ such that the identity map on $U_0^M$ extends to an isomorphism from $M$ to $\overline{X}$.
    \end{lemma}
    \begin{proof}
        We build $\overline{X}$ and the isomorphism by induction; there is no choice.
    \end{proof}

    \begin{lemma}
        Suppose $\overline{X}$ is a rich $(\lambda, \alpha_*)$-powerset sequence. Then $\overline{X} \models \Psi_{\alpha_*}$.
    \end{lemma}
    \begin{proof}
        We aim to apply Lemma~\ref{ExtraConditions}. 

        We first check the first condition of Lemma~\ref{ExtraConditions} when $\alpha > 0$; the case $\alpha = 0$ is easier. Say $\alpha = \beta+1$. Suppose $u_0, u_1$ are disjoint finite subsets of $X_\beta$ and $v_0, v_1$ are disjoint finite subsets of $\bigcup\{X_\gamma: (\alpha, \gamma) \in \Omega_{\alpha_*}\}$, such that for all $\gamma \in \Omega_{\alpha_*}$ and for all $c_0, c_1$ with $c_i \in v_i \cap X_{\gamma}$, $c_0$ is not $E_{\alpha \gamma}$ -related to $c_1$. Let $\mathcal{O} \subseteq X_\alpha$ be the open set of all $a \in X_\alpha$ with $b R a$ for all $b \in u_0$ and with $\lnot b R a$ for all $b \in u_1$. Since $X_\alpha$ is dense in $\mathcal{P}(X_\beta)$, we get that $\mathcal{O}$ is nonempty. Since $(B_{\alpha \gamma}: (\alpha, \gamma) \in \Omega_{\alpha_*})$ is densely independent over $X_\alpha$, we get the existence of infinitely many  $a \in \mathcal{O}$ such that $a R c$ for all $c \in v_0$ and $\lnot a R c$ for all $c \in v_1$, as desired.

        Now we check the second condition of Lemma~\ref{ExtraConditions} for $\delta$ limit. Suppose $\beta_*, d, u_0, u_1, v_0, v_1$ are as there. Let $\beta < \delta$ be large enough so that each $u_0, u_1 \subseteq \bigcup\{X_{\beta'}: \beta' < \beta\}$. By iteratively applying the fourth condition of the definition of richness, we get infinitely many $E_{\beta_*, \delta}$-inequivalent $a \in X_\delta$ with $a E_{\beta_* \delta} d$ and such that $b R a$ for all $b \in u_0$ and $\lnot b R a$ for all $b \in u_1$. For each such $a$, the $E_{\beta \delta}$-class of $a$ is a nonempty open set, so by dense independence we can find $a' E_{\beta \delta} a$ with $a' R c$ for all $c \in v_0$ and $\lnot a' R c$ for all $c \in v_1$, as desired.
    \end{proof}

\section{Constructing Thick Sets}\label{ThickCompSection}

We aim to prove Theorem~\ref{FatModelsExist1}. Fix throughout this section the following data: $\lambda$, a regular strong limit;  $\alpha_* < \lambda^+$, and $f_* \in \mathbb{F}$. We want to find some $f_*$-closed $A \in \mathbb{V}_{\lambda^+}$ such that $|\mathcal{P}^{\alpha_*}(\lambda) \cap A| = \beth_{\alpha_*}(\lambda)$.  Choose $a_* \in \HC$ containing parameters for $f_*$. Fix some transitive $V \preceq H(\lambda^+)$ of cardinality $\lambda$ with $a_*, \alpha_*, [\lambda]^{<\lambda} \in V$.

We can suppose that $f_*$ satisfies the conclusion of Lemma~\ref{PsiCat2}.  Thus it is enough to construct a rich $(\lambda, \alpha_*)$-sequence $\overline{X}$ and some $f_*$-closed $A \in \mathbb{V}_{\lambda^+}$ with $\overline{X} \in A$. We aim to apply Lemma~\ref{RichExists}; so we must first define $X_0$.

 If $S$ is a set and $\lambda$ is a cardinal, then let $P_{S2\lambda}$ be the set of all partial functions from $S$ to $2$ of cardinality less than $\lambda$. We view $P_{S2\lambda}$ as adding a $\lambda$-Cohen $a \subseteq \mathcal{P}(S)$ (identifying $2^S \cong \mathcal{P}(S)$). Note that if $V$ is a transitive model of $ZFC^-$ with $S\in V$, then $P_{S2\lambda} \in V$, so it makes sense to say when $a \subseteq S$ is $\lambda$-Cohen over $V$. We also view each $(P_{S2 \lambda})^n = P_{S \times n \, 2 \lambda}$, so given $\overline{a} \in (\mathcal{P}(\lambda))^n$, it makes sense to say when $\overline{a}$ is $\lambda$-Cohen over $V$.

\begin{definition}Say $X_0 \subseteq \mathcal{P}(\lambda)$ is $V$-{\em symmetric} if: $X_0$ is $2^\lambda$-dense in $\mathcal{P}(\lambda)$ (with the $<\lambda$-support product topology), and for each injective finite sequence $\overline{a} \in X_0^n$, $\overline{a}$ is $\lambda$-Cohen over $V$. 
\end{definition}

Clearly, if $X_0$ is $V$-symmetric then all elements of $X_0$ have the same foundation rank, namely $\lambda$.

We want to construct some $V$-symmetric $X_0$. The following partial step towards this is a generalization of Claim 6.28 from \cite{CanRamsey} to the case where $\lambda$ is uncountable.

\begin{lemma}
    There is some continuous $h: 2^\lambda \to 2^\lambda$ such that for all injective $\overline{a} \in 2^\lambda$, $h(\overline{a})$ is $\lambda$-Cohen over $V$.
\end{lemma}
\begin{proof}
Let $D_\alpha: \alpha < \lambda$ enumerate all dense open subsets of the various $P_{\lambda 2 \lambda}^n$ for $n < \omega$ which belong to $V$, such that each dense open set occurs cofinally often. We construct, by induction on $\alpha < \lambda$, functions $h_\alpha: 2^\alpha \to 2^{<\lambda}$ so that:

\begin{itemize}
    \item For $\alpha < \beta$ and $s \in 2^\alpha, t \in 2^\beta$ with $s \leq t$, we have $h_\alpha(s) \leq h_\alpha(t)$;
    \item If $D_\alpha$ is dense open in $P_{\lambda 2 \lambda}^n$ then for all $(t_i: i < n)$ distinct from $2^\alpha$, $(h_\alpha(t_i): i < n) \in D_\alpha$. 
\end{itemize}
This is straightforward, using that $\lambda$ is a regular strong limit. Having done so, set $h(f) = \bigcup_{\alpha < \lambda} h_\alpha(f \restriction_\alpha)$ for $f \in 2^\lambda$. Each $h(f)$ is $\lambda$-Cohen over $V$, in particular is a total function; $h$ is easily seen to work.
\end{proof}

\begin{lemma}\label{SymSystemsExist} There exists some $V$-symmetric $X_0 \subseteq \mathcal{P}(\lambda)$. 
\end{lemma}
\begin{proof}
	By the preceding lemma (and using a bijection between $\lambda$ and $\lambda \times \lambda$) we can find $h: 2^\lambda \to 2^{\lambda \times \lambda}$ continuous, so that for all $\overline{a} \in (2^\lambda)^{<\omega}$ injective, $h(\overline{a})$ is $\lambda$-Cohen over $V$. For each $f \in 2^{\lambda \times \lambda}$ and $\gamma < \lambda$ let $f_\gamma \in 2^\lambda$ be defined as $f_\gamma(\nu) = f(\gamma, \nu)$. Let $X_0 := \{h(a)_\gamma: a \in 2^\lambda, \gamma < \lambda\}$. We claim that $X_0$ works. (We are identifying $2^\lambda$ with $\mathcal{P}(\lambda)$.)

    $X_0$ is $2^\lambda$-dense in $2^\lambda$ since for all $a \in 2^\lambda$, $\{h(a)_\gamma: \gamma< \lambda\}$ is dense in $2^\lambda$. (This is because $h(a)$ is $\lambda$-Cohen over $V$.)

    Suppose $(a_i: i < i_*)$ is an injective finite sequence from $2^\lambda$ and $(\gamma_j: j < j_*)$ is a finite injective sequence from $\lambda$. Then $(h(a_i)_{\gamma_j}:i < i_*, j < j_*)$ is $\lambda$-Cohen over $V$, since $(h(a_i): i < i_*)$ is.
\end{proof}
By Lemma~\ref{RichExists}, we can find some rich $(\lambda, \alpha_*)$-powerset sequence $\overline{X}$ with $X_0$ being $V$-symmetric over $\mathcal{P}(\lambda)$.

If we can find some $f_*$-closed $A \in \mathbb{V}_{\lambda^+}$ with $\overline{X} \in A$, then we are done, by choice of $f_*$. So we aim to find some such $A$.

By Lemma~\ref{Satisfiable0}, we can find some $N \models \Psi_{\alpha_*}$ with $N \in V$. Let $P$ be the set of all finite partial isomorphisms from $N$ to $\overline{X}$. By Lemma~\ref{PsiCat}, $P$ adds an isomorphism $\dot{\mathbf{\sigma}}: \check{N} \cong \check{\overline{X}}$.

We identify $2^\lambda$ with $\mathcal{P}(\lambda)$, and so view $X_0 \subseteq 2^\lambda$. Let $Q = \prod_{U_0^N} P_{\lambda 2 \lambda}$ with finite supports, so $Q \in V$. Rephrased, $Q$ is the set of all finite partial functions $f$ from $U_0^N$ to $P_{\lambda 2 \lambda}$. 

Let $\dot{g}$ be the $P$-name for $\dot{\sigma} \restriction_{\check{U}_0^{\check{N}}}: \check{U}_0^{\check{N}} \to 2^\lambda$.

\begin{lemma}\label{BasicCount1}
	$P$ forces that $\dot{g}$ is $\check{Q}$-generic over $\check{V}$.
\end{lemma}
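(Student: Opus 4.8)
The plan is to show that the generic filter $\dot\sigma$ added by $P$, when composed with $h$ and restricted to $U_0^N$, meets every dense subset of $Q$ lying in $V$. Fix $p \in P$ and a set $D \in V$ which $P$ forces to be dense in $\check Q$; we must find $p' \le p$ in $P$ forcing that $\dot g$ restricted to an appropriate finite domain lands inside some condition of $D$. The condition $p$ is a finite partial isomorphism from $N$ to $M$; let $s = p \cap (U_0^N \times U_0^M)$ be its ``ground-level'' part, and let $a_0, \dots, a_{k-1}$ enumerate the elements of $U_0^N$ in $\dom(s)$, with $b_i = s(a_i) \in U_0^M$, so $h(b_i) \in X \subseteq 2^\lambda$. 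The key point is that $\overline{h(b_i)}$ is $\lambda$-Cohen over $V$ by $V$-symmetry of $X$ (the $b_i$ are distinct since $p$ is a partial isomorphism, hence the $h(b_i)$ are distinct); thus the finite condition $q_0 := \langle h(b_i)\restriction \mu : i < k\rangle \in Q$ (for a suitable $\mu < \lambda$, chosen large enough that the $h(b_i)\restriction\mu$ are pairwise distinct and that $q_0$ records the relevant information) is a condition in the $Q$-generic filter determined by $\overline{h(b_i)}$.

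Next I would invoke genericity of $\overline{h(b_i)}$ over $V$: since $D \in V$ is (forced by $P$ to be, hence really is --- $D$ is just a subset of $Q$, and $Q \in V$) a dense subset of $Q$, there is some $q_1 \le q_0$ with $q_1 \in D$; moreover, by shrinking, we may take $q_1 \in V$ and $\dom(q_1) \subseteq U_0^N$ finite, say with additional coordinates $a_k, \dots, a_{m-1} \in U_0^N \setminus \dom(s)$. Now I would use the mixing/extension properties of $P$: because $M$ satisfies the ``everything that can happen, happens'' clauses of $\Psi_{\alpha_*}$ and because $X$ is $2^\lambda$-dense in $\mathcal P(\lambda)$, for each new coordinate $a_j$ ($k \le j < m$) we can pick $b_j \in U_0^M$ with $h(b_j)\restriction(\text{support of }q_1) = q_1(a_j)\restriction(\cdots)$ and with $p \cup \{(a_k,b_k),\dots,(a_{m-1},b_{m-1})\}$ still a finite partial isomorphism from $N$ to $M$ (the $b_j$ can be chosen distinct from each other and from the $b_i$, $i<k$, since we are only imposing finitely many constraints and each relevant open set of $\mathcal P(\lambda)$ meets $X$ in $2^\lambda$-many points). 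Call this extension $p'$; then $p' \le p$ in $P$, and $p'$ forces $\dot g(a_j) \supseteq q_1(a_j)$ for all $j < m$, i.e. $p'$ forces $q_1 \in \dot G_Q$ where $\dot G_Q$ is the filter on $Q$ generated by $\dot g$. Since $q_1 \in D$, this shows $p'$ forces $\dot G_Q \cap D \ne \emptyset$, completing the density argument.

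The main obstacle I anticipate is bookkeeping the two ``layers'' of genericity cleanly: the $P$-generic $\sigma$ is what we are building, but the $Q$-genericity we want is of $h \circ \sigma$ over the \emph{ground model} $V$, and these must be reconciled through the $V$-symmetry of $X$. Concretely, one must be careful that the dense set $D$, although named as a $P$-name, is really an element of $V$ (which holds because $D \subseteq Q$ and $Q \in V$, so $D$ can be coded absolutely --- or one argues in $V$ directly), so that $V$-symmetry of $X$ (which only knows about $V$, not about $P$) can be applied to it. A secondary subtlety is verifying that the support of $q_1$ is a bounded subset of $\lambda$ with bounded-below-$\lambda$ ``width'', so that the partial function $h(b_j)\restriction(\text{support})$ agreeing with $q_1(a_j)$ can genuinely be arranged by choosing $b_j$ from the $2^\lambda$-dense set $X$; this uses $\lambda = \lambda^{<\lambda}$ and the fact that conditions in $P_{\lambda 2\lambda}$ have size $<\lambda$. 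Everything else is routine verification that the three relevant ``moves'' (restricting $p$, extending by $D$-genericity, re-extending $p$ inside $P$) are compatible.
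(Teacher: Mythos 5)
Your architecture matches the paper's: restrict the condition $p$ to its ground-level part $u = \dom(p)\cap U_0^N$, use $V$-symmetry of $X$ to see that $h\circ(p\restriction_u)$ is generic over $V$ for the finite sub-product $P_{\lambda 2\lambda}^u$, hit (the projection of) the dense set $D$ there, lift to a condition $t\in D$, and then extend $p$ inside $P$ so that the new ground-level images realize $t$. The projection step is stated a bit loosely (you should say explicitly that the image of $D$ under the restriction map $Q\to P^u_{\lambda2\lambda}$ is dense, so the generic determined by $\overline{h(b_i)}$ meets it, and then lift), but that is recoverable.

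The genuine gap is at the final extension step. For each new coordinate $a_j$ you must find $\nu_j\in X$ satisfying \emph{simultaneously} (i) $\nu_j\in\mathcal{O}_j$, the basic open set prescribed by $t(a_j)$, and (ii) for every $c\in U_1^N\cap\dom(p)$, $(1,\nu_j)\mathrel{R^M}p(c)$ iff $a_j\mathrel{R^N}c$ --- otherwise $p'$ is not a partial isomorphism. Constraint (ii) says $\nu_j$ must lie in a specific finite Boolean combination of the sets in $h[U_1^M]$. The two facts you cite do not together yield a simultaneous solution: the ``everything that can happen, happens'' clauses of $\Psi_{\alpha_*}$ give infinitely many witnesses to (ii) but say nothing about where they sit in $2^\lambda$, and $2^\lambda$-density of $X$ gives many points of $X$ in $\mathcal{O}_j$ but says nothing about their $R$-relations to $U_1^M$. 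A priori some $b\in U_1^M$ could have $h(b)\cap X$ contained in the complement of $\mathcal{O}_j$, killing the construction. What is actually needed --- and what Theorem~\ref{SymSystemsPushUp} arranges for exactly this purpose --- is that $Y=h[U_1^M]$ is \emph{densely independent} over $X$: every finite Boolean combination from $Y$ meets every nonempty open subset of $X$. With that property invoked (plus the paper's minor preliminary step of extending $t$ so the $\mathcal{O}_j$ are pairwise disjoint and avoid the already-used points, which guarantees the new $\nu_j$ are distinct), your argument closes.
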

\begin{proof}
	Suppose $D$ is a dense subset of $Q$ in $V$ and $\sigma: N \to \overline{X}$ is a finite partial isomorphism. It suffices to show that we can find some $\tau$ extending $\sigma$, so that $\tau \restriction_{U_0^N}$ extends an element of $D$.
	
	Let $u = \mbox{dom}(\sigma) \cap U_0^N$, a finite subset of $U_0^N$. By choice of $X_0$, we have that $\sigma \restriction_u$ is $P_{\lambda 2 \lambda}^u$-generic over $V$. We have the natural restriction map $\pi: Q \to P_{\lambda 2 \lambda}^u$. $\pi[D]$ is dense in $P_{\lambda 2 \lambda}^u$ and is in $V$, so we can find $t_0 \in \pi[D]$ with $t_0 \subseteq \sigma \restriction_u$. Choose $t \in D$ with $\pi(t) = t_0$. It suffices to show we can find some $\tau \in P$ extending $\sigma$, such that $\tau \restriction_{U_0^N}$ extends $t$.
	
	Enumerate $\mbox{dom}(t) \backslash u = \{a_i: i < n\}$. For each $i < n$, let $\mathcal{O}_i$ be the basic open subset of $2^\lambda$ determined by $t(a_i)$, namely $\mathcal{O}_i$ is the set of extensions of $t(a_i)$ to $2^\lambda$. By extending $t$, we can suppose $(\mathcal{O}_i: i < n)$ are pairwise disjoint, and that for each $i <n$ and for each $a \in u$, $\sigma(a) \not \in \mathcal{O}_i$.
	
    Let $v = \{a \in \mbox{dom}(\sigma): a \in U_\alpha^N \mbox{ for some } \alpha \mbox{ with } (0, \alpha) \in \Omega_{\alpha_*}\}$, a finite subset of $N$. For each $i < n$, by dense independence, we can find some $\nu_i \in \mathcal{O}_i$ such that for each $b \in v$, $\nu_i R^{\overline{X}} \sigma(b)$ if and only if $a_i R^N b$. Define $\tau(a) = \sigma(a)$ for all $a \in \mbox{dom}(\sigma)$, and define $\tau(a_i) = \nu_i$ for each $i < n$. Then $\tau \in P$ extends $\sigma$, and further $\tau \restriction_{U_0^N}$ extends $t$.
\end{proof}

Now we finish. Working in $\mathbb{V}$, let $A$ be the least $f_*$-closed set with $\overline{X} \in A$ (that is, the least transitive set $A$ which is closed under $(f_*)_{\ptl}$ and closed under finite sequences). We need to show that $A \in \mathbb{V}_{\lambda^+}$.

Let $\mathbb{V}[G]$ be a $P$-generic forcing extension of $\mathbb{V}$, and let $\sigma = \mbox{val}(\dot{\sigma}, G)$, the generic isomorphism from $N$ to $\overline{X}$. Write $g =  \sigma \restriction_{U_0^N}$; by Lemma~\ref{BasicCount1}, $g$ is $Q$-generic over $V$, hence $V[g]$ is a forcing extension of $V$. But then $\overline{X} \in V[g]$: it can be recovered as the unique $\alpha_*$-powerset sequence with $X_0 = g[U_0^N]$, such that $g$ extends to an isomorphism from $N$ to $\overline{X}$. Now $V[g]$ is closed under $(f_{*})_{\ptl}$ by Lemma~\ref{ptlAbsolute} and $\overline{X} \in V[g]$, so $A \subseteq V[g]$, so $\mbox{rnk}(A) \leq \mbox{rnk}(V[g]) = \mbox{rnk}(V)$.

Back in $\mathbb{V}$, this means $A \in \mathbb{V}_{\lambda^+}$.

\section{Schr\"{o}der--Bernstein Properties}\label{BCSBSBSec}

In this section, we define various Schr\"{o}der--Bernstein properties of sentences $\Phi \in \mathcal{L}_{\omega_1 \omega}$. In the next section, we apply the thickness machinery to show that these properties imply a bound on the complexity of countable models of $\Phi$, assuming large cardinals. 

There is significant freedom in what definition of embedding to use. We could just use elementary embeddings, but that would not be general enough for some applications in future work. Instead we will look at embeddings that preserve $\equiv_{\alpha \omega}$-type, as defined below. The case of elementary embeddings can be recovered with $\alpha = 0$.

\begin{definition}
    Given two structures $M, N$ and $\overline{a} \in M, \overline{b} \in N$, we define what it means for $(M, \overline{a}) \equiv_{\alpha \omega} (N, \overline{b})$ by induction on $\alpha$.

    Say that $(M, \overline{a}) \equiv_{0 \omega} (N, \overline{b})$ if $\overline{a}$ and $\overline{b}$ have the same first-order type. Say that $(M, \overline{a}) \equiv_{\alpha +1, \omega} (N, \overline{b})$ if for every $a \in M$ there is $b \in N$ with $(M, \overline{a}a) \equiv_{\alpha \omega} (N, \overline{b} b)$, and vice versa. Say that $(M, \overline{a}) \equiv_{\delta \omega} (N, \overline{b})$ if for all $\alpha < \delta$, $(M, \overline{a}) \equiv_{\alpha \omega} (N, \overline{b})$.
\end{definition}
\begin{definition}
	Suppose $\mathcal{L}$ is a language, and $M, N$ are $\mathcal{L}$-structures, and $\alpha$ is a countable ordinal. Then say that $f: M \preceq_{\alpha \omega} N$ is an $\equiv_{\alpha \omega}$-{\em embedding} if for all $\overline{a} \in M$, $(M, \overline{a}) \equiv_{\alpha \omega} (M, f(\overline{a})$. Say that $M \sim_{\alpha \omega} N$ if $M \preceq_{\alpha \omega} N \preceq_{\alpha \omega} M$.

 Given an ordinal $\alpha$ and potential canonical Scott sentences $\phi, \psi$, say that $\phi \preceq_{\alpha\omega} \psi$ if for some or any forcing extension in which $\phi, \psi, \alpha$ are hereditarily countable, and for some or any countable $M \models \phi, N \models \psi$, we have $M \preceq_{\alpha\omega} N$; and similarly define $\sim_{\alpha \omega}$ on potential canonical Scott sentences. 
\end{definition}

\begin{definition}
For $\alpha$ a countable ordinal, say that $(\Phi, \sim_{\alpha \omega})$ has the {\em Schr\"{o}der--Bernstein property} if for all $M, N \models \Psi$ countable, if $M \sim_{\alpha \omega} N$ then $M \cong N$. For an arbitrary ordinal $\alpha$, say that $(\Phi, \sim_{\alpha \omega})$ has the {\em potential Schr\"{o}der--Bernstein property} if for for all potential canonical Scott sentences $\phi, \psi$ of $\Phi$, if $\phi \sim_{\alpha \omega} \psi$ then $\phi = \psi$. 
\end{definition}

We aim to show that whenever $(\Phi, \sim_{\alpha \omega})$ has the (potential) Schr\"{o}der--Bernstein property, then under appropriate large cardinals this implies a bound on thickness.  First off, we show there is no conflict if we drop the adjective ``potential:"

\begin{lemma}
Supose $\alpha$ is a countable ordinal and $\Phi$ is a sentence of $\mathcal{L}_{\omega_1 \omega}$. Then $(\Phi, \sim_{\alpha \omega})$ has the Schr\"{o}der--Bernstein property if and only if it has the potential Schr\"{o}der--Bernstein property.
\end{lemma}
\begin{proof}
    The reverse direction is immediate. For the forward direction, use Shoenfield Absoluteness to get that $(\Phi, \sim_{\alpha \omega})$ retains the Schr\"{o}der--Bernstein property in every forcing extension, from which the potential Schr\"{o}der--Bernstein property follows.
\end{proof}
We can thus drop the adjective ``potential."

We want a more explicit characterization of what it means for a pair of potential canonical Scott sentences $\phi, \psi$ to have $\phi \sim \psi$. For this, it is helpful to consider colored trees.

\begin{definition}
	 Suppose $C$ is a set. A {\em $C$-colored tree} is a structure $\mathcal{T} = (T,\leq_{\mathcal{T}}, 0_{\mathcal{T}}, c_{\mathcal{T}})$ where $(T, \leq_{\mathcal{T}}, 0_{\mathcal{T}})$ is a tree of height at most $\omega$ with root $0_{\mathcal{T}}$, and $c_{\mathcal{T}}: T \to C$ is a coloring (formally given by a sequence of disjoint unary predicates indexed by some $C' \subseteq C$, namely the used colors). A {\em colored tree} is a $C$-colored tree for some $C$. If $C \subseteq \HC$ is absolutely $\Delta_1$ then let $\mbox{CT}_{\aleph_0}(C)$ denote all hereditarily countable $C$-colored trees. Given any class $C$ let $\mbox{CT}_{\infty}(C)$ be the class of all $C$-colored trees. So $\mbox{CT}_{\aleph_0}(X)_{\ptl} =\mbox{CT}_{\infty}(X_{\ptl})$.

  Say that $f: \mathcal{T} \leq^{\mbox{\small{ct}}} \mathcal{T}' $ is an embedding of colored trees if $f(0_{\mathcal{T}}) = 0_{\mathcal{T}'}$, and $f$ preserves height and colors, and for all $s, t \in \mathcal{T}$, $s \leq_{\mathcal{T}} t$ implies $f(s) \leq_{{\mathcal{T}}'} f(t)$. $f$ need not be injective. Say that $\mathcal{T} \sim^{\mbox{\small{ct}}} \mathcal{T}'$ if $\mathcal{T} \leq^{\mbox{\small{ct}}} \mathcal{T}' \leq^{\mbox{\small{ct}}} \mathcal{T}$.
	
	If $\mathcal{T}$ is a colored tree and $s \in \mathcal{T}$, let $\mathcal{T}_{\geq s}$ be the colored tree with root $0_{\mathcal{T}_{\geq s}} = s$, consisting of all elements of $\mathcal{T}$ extending $s$.
\end{definition}

\begin{definition}
	Suppose $\mathcal{T}, \mathcal{T}'$ are colored trees. We define what it means for $\mathcal{T} \leq_\alpha^{\mbox{\small{ct}}} \mathcal{T}'$ by induction on $\alpha$. Put $\mathcal{T} \leq_0^{\mbox{\small{ct}}} \mathcal{T}'$ if $c_{\mathcal{T}}(0_{\mathcal{T}}) = c_{\mathcal{T}'}(0_{\mathcal{T}'})$. For $\delta$ limit, put $\mathcal{T} \leq_\delta^{\mbox{\small{ct}}}\mathcal{T}'$ if $\mathcal{T} \leq_\alpha^{\mbox{\small{ct}}}\mathcal{T}'$ for all $\alpha < \delta$. Finally, put $\mathcal{T} \leq_{\alpha+1}^{\mbox{\small{ct}}} \mathcal{T}'$ if $0_{\mathcal{T}}$ and $0_{\mathcal{T}'}$ have the same color, and for all $s \in \mathcal{T}$ an immediate successor of  $0_{\mathcal{T}}$, there is $s' \in \mathcal{T}'$ an immediate successor of $0_{\mathcal{T}'}$, such that $\mathcal{T}_{\geq s} \sim_{\alpha}^{\mbox{\small{ct}}} \mathcal{T}'_{\geq s'}$. 
\end{definition}

\begin{lemma}\label{EmbeddingsFiltrate}
Suppose $\mathcal{T}, \mathcal{T}'$ are colored trees. Then 
    $\mathcal{T} \leq^{\mbox{\small{ct}}} \mathcal{T}'$ if and only if $\mathcal{T} \leq_\alpha^{\mbox{\small{ct}}} \mathcal{T}'$ for all $\alpha$. In fact, this is a theorem of $ZFC^-$.
\end{lemma}

The following lemma is very special to colored trees. For instance, the embeddability relation on uncountable dense linear orders is very complicated, even though DLO is $\aleph_0$-categorical.

\begin{lemma}\label{EmbeddingIsAbsoluteOnCT}
	Suppose $V$ is a transitive model of $ZFC^-$, and $\mathcal{T}, \mathcal{T}' \in V$ are colored trees. Then for $R \in \{\leq^{\mbox{\small{ct}}}, \sim, \leq^{\mbox{\small{ct}}}_\alpha: \alpha \in V\}$, we have that $\mathcal{T} \,\, R \,\,  \mathcal{T}'$ if and only if $(\mathcal{T} \,\, R \,\, \mathcal{T}')^V$.
\end{lemma}
\begin{proof}
	First, an easy induction on $\alpha \in V$ shows that for all $\mathcal{T}, \mathcal{T}' \in V$, $\mathcal{T} \leq^{\mbox{\small{ct}}}_\alpha \mathcal{T}'$ if and only if $(\mathcal{T} \leq^{\mbox{\small{ct}}}_\alpha \mathcal{T}')^V$. 
	
	Now, if $\mathcal{T} \leq^{\mbox{\small{ct}}} \mathcal{T}'$, then in particular $\mathcal{T} \leq^{\mbox{\small{ct}}}_\alpha \mathcal{T}'$ for all $\alpha \in V$, so $(\mathcal{T} \leq^{\mbox{\small{ct}}} \mathcal{T}')^V$. For the converse, suppose $(\mathcal{T} \leq^{\mbox{\small{ct}}} \mathcal{T}')^V$; choose $f \in V$ with $f: \mathcal{T} \leq^{\mbox{\small{ct}}} \mathcal{T}'$. Then $f$ witnesses $\mathcal{T} \leq^{\mbox{\small{ct}}} \mathcal{T}'$.
\end{proof}

We also need the following notation for Scott sentences, following \cite{URL}:

\begin{definition}\label{ScottSentenceNotation}
	If $\phi$ is a canonical Scott sentence -- that is, $\phi\in \CSS(\mathcal{L})_\ptl$ -- then let $S^n_\infty(\phi)$ be the set of all potential canonical Scott sentences in the language $\mathcal{L}'=\mathcal{L}\cup \{c_0,\ldots,c_{n-1}\}$ which imply $\phi$.  We will refer to elements of $S^n_\infty(\phi)$ as types, i.e. as infinitary formulas with free variables $x_0,\ldots,x_{n-1}$, resulting from replacing each $c_i$ with a new variable $x_i$ not otherwise appearing in the formula.  It is equivalent to define $S^n_\infty(\phi)$ by forcing: if $\mathbb{V}[H]$ makes $\phi$ hereditarily countable and $M\in \mathbb{V}[H]$ is the unique countable model of $\phi$, then $S^n_\infty(\phi)$ is the set $\{\css(M,\overline{a}):\overline{a}\in M^n\}$. 
\end{definition}

The following lemma, combined with Lemma~\ref{EmbeddingIsAbsoluteOnCT}, explains why embedding on colored trees is useful for us. 

\begin{lemma}\label{EmbeddingAbsolute}
	Suppose $\Phi \in \mathcal{L}_{\omega_1 \omega}$. Then there is an absolutely $\Delta_1$ map $f: \mbox{CSS}(\Phi) \times \omega_1  \to \mbox{CT}_{\aleph_0}(\HC)$, such that persistently, the following holds: for all $\phi, \psi \in \CSS(\Phi)$, and for all countable ordinals $\alpha$, $\phi \sim_{\alpha \omega} \psi$ if and only if $f(\phi, \alpha) \sim^{\mbox{\small{ct}}} f(\psi, \alpha)$, and further, for all $\phi \in \mbox{CSS}(\Phi)$ and $\alpha < \omega_1$, $f(\phi, \alpha) \in \mbox{CT}_{\aleph_0}(\HC_{\omega + \alpha+1})$.
\end{lemma}
\begin{proof}
	
	Suppose $\phi \in \mbox{CSS}(\Phi)$ and $\alpha < \omega_1$; we describe how to construct $f(\phi, \alpha)$. For each $n < \omega$, let $S^n_\infty(\phi) $ be as defined in Definition~\ref{ScottSentenceNotation}. Then $S^{<\omega}_{\infty}(\phi) = \bigcup_n S^n_\infty(\phi)$ naturally forms a tree whose $n$'th level is $S^n_\infty(\phi)$. Given an ordinal $\alpha$, we color each node $p(\overline{x}) \in S^n_\infty(\phi)$ by an element of $\HC_{\omega+\alpha+1}$ encoding the type of $(M, \overline{a})$ up to $\equiv_{\alpha \omega}$-equivalence, for some or any $(M, \overline{a}) \models p(\overline{x})$. Let $f(\phi, \alpha)$ denote the colored tree so obtained. We claim that $f$ works. It is clearly $a \Delta_1$.

    After passing to a forcing extension, it suffices to show that for all $\phi, \psi \in \CSS(\Phi)$ and for all countable ordinals $\alpha$, we have that $\phi \preceq_{\alpha \omega} \psi$ if and only if $f(\phi, \alpha) \leq^{\mbox{\small{ct}}} f(\psi, \alpha)$. Let $\phi, \psi, \alpha$ be given and let $M \models \phi, N \models \psi$ be countable.

    A straightforward induction allows us to choose $\overline{a}_p: p(\overline{x}) \in S^{<\omega}_\infty(\phi)$ with $\overline{a}_p \in M^{|\overline{x}|}$, such that each $(M, \overline{a}_p) \models p(\overline{x})$ and such that $p(\overline{x}) \leq q(\overline{x} \overline{y})$ (in the tree order on $S^{<\omega}_\infty(\phi)$) implies $\overline{a}_p \subseteq \overline{a}_q$. Similarly choose $\overline{b}_p: p(\overline{x}) \in S^{<\omega}_\infty(\psi)$ from $N^{<\omega}$.
    
    Suppose now that there is some $h: M \preceq_{\alpha\omega} N$. Then the map $S^{<\omega}_\infty(\phi) \to S^{<\omega}_\infty(\psi)$ sending $p(\overline{x})$ to $\css(N, h(\overline{a}_p))$ is an embedding of colored trees.

    Finally, suppose that there is some tree embedding from $\tilde{h}: S^{<\omega}_\infty(\phi)\to S^{<\omega}_\infty(\psi)$ preserving the colors. Let $a_n: n < \omega$ be an enumeration of $M$. For each $n$ let $p_n(x_i: i < n)$ be equal to $\tilde{h}(\css(M, a_i: i < n))$. Then $n < m$ implies $p_n \leq p_m$, so $\overline{b}_{p_n} \subseteq \overline{b}_{p_m}$, so we can unambiguosly write $\overline{b}_{p_n} = (b_i: i < n)$. Then $(a_n \mapsto b_n: n < \omega)$ is the desired $\preceq_{\alpha \omega}$-embedding.
\end{proof}

\section{Counting Colored Trees up to Bi-embeddability}\label{BCSBCountingSec}

In this section, we show that if $\Phi$ has the Schr\"{o}der--Bernstein property, then $\Phi$ is not Borel complete, assuming a certain large cardinal. Specifically, we will need the Erd{\H o}s cardinals: 

\begin{definition}
	Suppose $\alpha$ is an ordinal (we will only use the case $\alpha = \omega$). Then let $\kappa(\alpha)$ be the least cardinal $\kappa$ with $\kappa \rightarrow (\alpha)^{<\omega}_2$ (if it exists). In words: whenever $F: [\kappa(\alpha)]^{<\omega} \to 2$, there is some $X \subseteq \kappa(\alpha)$ of ordertype $\alpha$, such that $F \restriction_{[X]^n}$ is constant for each $n < \omega$.
	
\end{definition}

$\kappa(\omega)$ is a large cardinal: it is always inaccessible and has the tree property. On the other hand, it is absolute to $\mathbb{V} = \mathbb{L}$, and well below the consistency strength of a measurable cardinal. See \cite{Kanamori} for a description of these results.

The following is a theorem of Shelah \cite{ShelahTrees}; see \cite{TFAGWShelah} for a more direct proof. In the following theorem, the term ``antichain" is used in the sense of well-quasi-ordering theory (rather than in the sense of forcing theory), so $A$ is an antichain of for all $a, b \in A$, $a \not \leq b$ and $b \not \leq a$.

\begin{theorem}\label{ShelahTreesThm}
	Suppose $\kappa(\omega)$ exists. Then $\omega$-colored trees, under $\leq^{\mbox{\small{ct}}}$,  is a $\kappa(\omega)$-well-quasi-order (in fact a $\kappa(\omega)$-better-quasi-order). In other words, it has no descending chains nor antichains of size $\kappa(\omega)$.

    Moreover, for every $\kappa < \kappa(\omega)$, there is an antichain of $\omega$-colored trees of length $\kappa$.
\end{theorem}

This theorem is a fundamental constraint on the complexity of bi-embeddability relations, and will allow us to bound the complexity of sentences with the Schr\"{o}der--Bernstein property. First we note a simple generalization, see also \cite{ShelahTrees}.

\begin{corollary}
Suppose $\kappa(\omega) < \infty$ and $C$ is a set of size less than $\kappa(\omega)$. Then $C$-colored trees, under $\leq^{\mbox{\small{ct}}}$, is a $\kappa(\omega)$-well-quasi-order. 
\end{corollary}
\begin{proof}
We can suppose $C$ is a cardinal $\kappa < \kappa(\omega)$. Let $(\mathcal{T}_\alpha: \alpha < \kappa)$ be an antichain of $\omega$-colored trees, which we suppose do not use the color $0$. Then given any $\kappa$-colored tree $\mathcal{T}$, let $\mathcal{S}$ be the $\omega$-colored-tree obtained from $\mathcal{T}$ by putting a copy of $\mathcal{T}_\alpha$ above every element of $\mathcal{T}$ of color $\alpha$, and then recoloring the original tree $\mathcal{T}$ by $0$. This provides a $\leq^{\mbox{\small{ct}}}$-preserving embedding from $\kappa$-colored trees into $\omega$-colored trees, which proves the corollary.
\end{proof}
Before proceeding, we want the following definition and technical lemma from \cite{ShelahTrees} (see the proof of Theorem 5.3 there). These allow us to replace general colored trees by well-founded colored trees.

\begin{definition}
	Suppose $\mathcal{T}$ is a colored tree and $\alpha$ is an ordinal. Then let $\mathcal{T} \times \alpha$ denote the colored tree of all pairs $(s, \overline{\beta})$, where $s \in \mathcal{T}$ is of height $n$, and $\overline{\beta} = (\beta_0, \ldots, \beta_{n-1})$ is a strictly decreasing sequence of ordinals with $\beta_0 < \alpha$. We define $c_{\mathcal{T} \times \alpha}(s, \overline{\beta}) = c_{\mathcal{T}}(s)$.
\end{definition}

\begin{lemma}\label{RedToWF}
	Suppose $\mathcal{T}, \mathcal{T}'$ are colored trees. Then for all ordinals $\alpha$,  $\mathcal{T} \times \alpha \leq^{\mbox{\small{ct}}}_\alpha \mathcal{T}' \times \alpha$ if and only if $\mathcal{T} \leq_\alpha^{\mbox{\small{ct}}} \mathcal{T}'$ if and only if $\mathcal{T} \times \alpha \leq^{\mbox{\small{ct}}} \mathcal{T}' \times \alpha$.
\end{lemma}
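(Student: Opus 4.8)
The statement chains three equivalences, and the natural strategy is to prove the two outer equivalences $\mathcal{T} \times \alpha \leq^{\mbox{\small{ct}}}_\alpha \mathcal{T}' \times \alpha \iff \mathcal{T} \leq_\alpha^{\mbox{\small{ct}}} \mathcal{T}'$ and $\mathcal{T} \leq_\alpha^{\mbox{\small{ct}}} \mathcal{T}' \iff \mathcal{T} \times \alpha \leq \mathcal{T}' \times \alpha$ separately, then combine. The key observation is that multiplying by $\alpha$ truncates the tree: a branch of $\mathcal{T} \times \alpha$ has length at most the length of the strictly decreasing ordinal sequence attached to it, hence at most $|\alpha|$, and more precisely a node at height $n$ can only have successors if we can still strictly decrease, so in $\mathcal{T} \times \alpha$ the relation $\leq^{\mbox{\small{ct}}}_\alpha$ and $\leq$ ought to coincide because after $\alpha$ steps the trees are genuinely finite-height-bounded in the relevant sense. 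So the first thing I would do is record the elementary structural facts about $\mathcal{T} \times \alpha$: its immediate successors of the root are pairs $(s,(\beta_0))$ with $s$ an immediate successor of $0_{\mathcal{T}}$ and $\beta_0 < \alpha$, and $(\mathcal{T} \times \alpha)_{\geq (s,(\beta_0))}$ is canonically isomorphic (as a colored tree) to $\mathcal{T}_{\geq s} \times \beta_0$.

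For the equivalence $\mathcal{T} \leq_\alpha^{\mbox{\small{ct}}} \mathcal{T}' \iff \mathcal{T} \times \alpha \leq^{\mbox{\small{ct}}}_\alpha \mathcal{T}' \times \alpha$, I would prove by induction on $\alpha$ the sharper claim that for all ordinals $\beta \leq \alpha$, $\mathcal{T} \leq_\beta^{\mbox{\small{ct}}} \mathcal{T}'$ if and only if $\mathcal{T}\times\alpha \leq_\beta^{\mbox{\small{ct}}} \mathcal{T}'\times\alpha$ — actually it is cleanest to prove: for all $\beta$, $\mathcal{T}\times\gamma \leq_\beta^{\mbox{\small{ct}}} \mathcal{T}'\times\gamma'$ (for $\gamma,\gamma'\geq\beta$, or some such bookkeeping) is controlled by $\mathcal{T} \leq_\beta^{\mbox{\small{ct}}} \mathcal{T}'$. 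The base case $\beta=0$ is just that colors of roots agree and $c_{\mathcal{T}\times\gamma}$ of the root equals $c_{\mathcal{T}}(0_{\mathcal{T}})$. The successor step uses the structural fact above: an immediate successor $(s,(\beta_0))$ of the root of $\mathcal{T}\times\gamma$ gives the subtree $\mathcal{T}_{\geq s}\times\beta_0$, and one needs $\beta_0$ (or $\gamma'$ on the other side) to be at least the relevant smaller ordinal $\beta$ so the inductive hypothesis applies — which is exactly why the "$\times\alpha$" with $\alpha$ as large as the index is the right normalization. The limit step is immediate from the definition of $\leq_\delta^{\mbox{\small{ct}}}$.

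For $\mathcal{T} \leq_\alpha^{\mbox{\small{ct}}} \mathcal{T}' \iff \mathcal{T}\times\alpha \leq \mathcal{T}'\times\alpha$: the forward direction is the substantive one. Since $\mathcal{T}\times\alpha$ is well-founded of rank $\leq\alpha$ (every branch has an attached strictly decreasing ordinal sequence below $\alpha$), I claim that for well-founded colored trees of rank $\leq\alpha$, the relation $\leq$ is \emph{equivalent} to $\leq^{\mbox{\small{ct}}}_\alpha$. This is proved by showing that $\leq^{\mbox{\small{ct}}}_\alpha$ on such trees is already stable — i.e., $\leq^{\mbox{\small{ct}}}_\alpha = \leq^{\mbox{\small{ct}}}_{\alpha+1} = \cdots$, hence equals $\bigwedge_\beta \leq^{\mbox{\small{ct}}}_\beta = \leq$ — using that an embedding can be built recursively down the well-founded tree: at a node of rank $\rho$, $\leq^{\mbox{\small{ct}}}_{\rho+1}$-equivalence (which here reduces to $\leq^{\mbox{\small{ct}}}_\alpha$ after the normalization) suffices to choose the image of each immediate successor. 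Combined with part two and Theorem~\ref{EmbeddingIsAbsoluteOnCT}-style absoluteness if needed, $\mathcal{T}\times\alpha\leq\mathcal{T}'\times\alpha$ gives $\mathcal{T}\times\alpha\leq^{\mbox{\small{ct}}}_\alpha\mathcal{T}'\times\alpha$, hence $\mathcal{T}\leq^{\mbox{\small{ct}}}_\alpha\mathcal{T}'$; conversely $\mathcal{T}\leq^{\mbox{\small{ct}}}_\alpha\mathcal{T}'$ gives $\mathcal{T}\times\alpha\leq^{\mbox{\small{ct}}}_\alpha\mathcal{T}'\times\alpha$, and by well-foundedness this upgrades to a genuine embedding $\mathcal{T}\times\alpha\leq\mathcal{T}'\times\alpha$. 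The main obstacle I anticipate is getting the ordinal bookkeeping exactly right in the successor step — in particular making sure that when passing to the subtree $(\mathcal{T}\times\alpha)_{\geq(s,(\beta_0))}\cong\mathcal{T}_{\geq s}\times\beta_0$ the ordinal $\beta_0$ is large enough to run the inductive hypothesis at the needed index, which I expect to handle by strengthening the induction hypothesis to range over the decreasing sequence of indices explicitly rather than proving the bare biconditional.
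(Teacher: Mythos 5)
Your two structural observations are correct and are the right foundation: $(\mathcal{T}\times\alpha)_{\geq(s,(\beta_0))}\cong\mathcal{T}_{\geq s}\times\beta_0$, and $\mathcal{T}\times\alpha$ is well-founded of rank at most $\alpha$, so that on such trees $\leq^{\mbox{\small{ct}}}_\alpha$ has already stabilized and therefore coincides with $\leq$ (this stabilization claim is provable by induction on the rank of the left-hand tree, exactly as you indicate). Your decomposition is, however, genuinely different from the paper's. The paper does not isolate either of your standalone equivalences; it proves the three implications \emph{cyclically} in a single induction on $\alpha$: $\mathcal{T}\times\alpha\leq^{\mbox{\small{ct}}}_\alpha\mathcal{T}'\times\alpha\Rightarrow\mathcal{T}\leq^{\mbox{\small{ct}}}_\alpha\mathcal{T}'\Rightarrow\mathcal{T}\times\alpha\leq\mathcal{T}'\times\alpha$, closing the loop with the trivial implication $\leq\ \Rightarrow\ \leq^{\mbox{\small{ct}}}_\alpha$. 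The payoff of that organization is precisely that the ``hard'' direction never has to track an index against a possibly-too-small ordinal factor: from $\mathcal{T}\leq^{\mbox{\small{ct}}}_{\alpha+1}\mathcal{T}'$ one only has to produce a genuine embedding $\mathcal{T}_{\geq s}\times\beta\leq\mathcal{T}'_{\geq s'}\times\beta$ at whatever ordinal $\beta$ is attached to the node, via the inductive hypothesis at index $\beta$, and an embedding carries no index. At limit stages the paper also uses the identity $(\mathcal{S}\times\gamma_0)\times\gamma_1\sim\mathcal{S}\times\min(\gamma_0,\gamma_1)$, applying the inductive hypothesis to $\mathcal{T}\times\delta$ itself as the base tree.

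The one place your plan is not yet a proof is exactly the spot you flag. With the hypothesis strengthened only to ``$\gamma,\gamma'\geq\beta$,'' the successor step of (a) breaks at an immediate successor $(s,(\delta))$ with $\delta<\beta_0$: the inductive hypothesis at index $\beta_0$ does not apply (the left factor is $\delta<\beta_0$), and the inductive hypothesis at index $\delta$ only yields $\leq^{\mbox{\small{ct}}}_\delta$, which must then be upgraded to $\leq^{\mbox{\small{ct}}}_{\beta_0}$ using the rank-$\leq\delta$ stabilization — i.e., your part (b) lemma has to be interleaved into part (a)'s induction, so the two parts cannot be proved in the order you propose. Two clean fixes: either prove the stronger biconditional $\mathcal{T}\times\gamma\leq^{\mbox{\small{ct}}}_\beta\mathcal{T}'\times\gamma'\iff\mathcal{T}\leq^{\mbox{\small{ct}}}_{\min(\beta,\gamma)}\mathcal{T}'$ (for $\gamma'\geq\min(\beta,\gamma)$), which records that truncating the left tree by $\gamma$ caps the effective index at $\min(\beta,\gamma)$ and makes the small-$\delta$ case follow from monotonicity; or adopt the paper's cyclic formulation, which sidesteps the index-tracking altogether.
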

\begin{proof}
	We verify by induction on $\alpha$ that $\mathcal{T} \times \alpha \leq^{\mbox{\small{ct}}}_\alpha \mathcal{T}' \times \alpha$ implies $\mathcal{T} \leq_\alpha^{\mbox{\small{ct}}} \mathcal{T}'$ implies $\mathcal{T} \times \alpha \leq^{\mbox{\small{ct}}} \mathcal{T}' \times \alpha$ (the remaining implication follows from Lemma~\ref{EmbeddingsFiltrate}). $\alpha = 0$ is immediate.
	
	Successor stage first implication: suppose $\mathcal{T} \times (\alpha+1) \leq^{\mbox{\small{ct}}}_{\alpha+1} \mathcal{T}' \times (\alpha+1)$, and let $s \in \mathcal{T}$ be an immediate successor of $0_{\mathcal{T}}$. Let $(s', \beta) \in \mathcal{T}' \times (\alpha+1)$ be an immediate successor of $0_{\mathcal{T}' \times (\alpha+1)}$ such that $(\mathcal{T} \times (\alpha+1))_{\geq (s, \alpha)} \leq^{\mbox{\small{ct}}}_\alpha (\mathcal{T}' \times (\alpha+1))_{ \geq (s', \beta)}$. This means that $(\mathcal{T}_{\geq s}) \times \alpha \leq^{\mbox{\small{ct}}}_\alpha (\mathcal{T}'_{\geq s'}) \times \beta$ (since the corresponding trees are isomorphic). But easily $(\mathcal{T}'_{\geq s'}) \times \beta \leq^{\mbox{\small{ct}}} (\mathcal{T}'_{\geq s'}) \times \alpha$, so we get that $(\mathcal{T}_{\geq s}) \times \alpha \leq^{\mbox{\small{ct}}}_\alpha (\mathcal{T}'_{\geq s'}) \times \alpha$. Thus, by the inductive hypothesis $\mathcal{T}_{\geq s} \leq^{\mbox{\small{ct}}}_\alpha \mathcal{T}'_{\geq s'}$. 
	
	Successor stage, second implication: suppose $\mathcal{T} \leq_{\alpha+1}^{\mbox{\small{ct}}} \mathcal{T}'$; given $(s, (\beta)) \in \mathcal{T} \times (\alpha+1)$ an immediate successor of $0_{\mathcal{T} \times (\alpha+1)}$, choose $s' \in \mathcal{T}'$ an immediate successor of $0_{\mathcal{T}}$ such that $\mathcal{T}_{\geq s}\leq_\beta^{\mbox{\small{ct}}} \mathcal{T}'_{\geq s'}$, and note by the inductive hypothesis that $(\mathcal{T} \times (\alpha+1))_{\geq (s, (\beta))} \cong \mathcal{T}_{\geq s} \times \beta \leq^{\mbox{\small{ct}}} \mathcal{T}'_{\geq s'} \times \beta \cong (\mathcal{T}' \times (\alpha+1))_{\geq (s', (\beta))}$. Thus there is an embedding from $(\mathcal{T} \times (\alpha+1))_{\geq(s, (\beta))}$ to $(\mathcal{T}' \times (\alpha+1))_{\geq (s', (\beta))}$. Patching together such embeddings for all $(s, (\beta))$ gives the desired embedding of $\mathcal{T} \times (\alpha+1)$ into $\mathcal{T}' \times (\alpha+1)$.
	
	Limit stage, first implication: suppose $\mathcal{T} \times \delta \leq^{\mbox{\small{ct}}}_\delta \mathcal{T}' \times\delta$. Thus, for all $\alpha < \delta$, $\mathcal{T} \times \delta \leq^{\mbox{\small{ct}}}_\alpha \mathcal{T}' \times\delta$. By the inductive hypothesis, this implies that for all $\alpha < \delta$, $(\mathcal{T} \times \delta) \times \alpha) \leq^{\mbox{\small{ct}}}_\alpha (\mathcal{T}' \times \delta)\times \alpha$, but always $(\mathcal{S} \times \gamma_0) \times \gamma_1 \sim^{\mbox{\small{ct}}} \mathcal{S} \times \mbox{min}(\gamma_0, \gamma_1)$, so we get that $(\mathcal{T} \times \alpha) \leq^{\mbox{\small{ct}}}_\alpha (\mathcal{T}' \times \alpha)$, hence by the inductive hypothesis again $\mathcal{T} \leq^{\mbox{\small{ct}}}_\alpha \mathcal{T}'$. This holds for all $\alpha < \delta$ so $\mathcal{T} \leq^{\mbox{\small{ct}}}_\delta \mathcal{T}'$.
	
	Limit stage, second implication: suppose $\mathcal{T} \leq^{\mbox{\small{ct}}}_\delta \mathcal{T}'$. Then by definition of $\leq^{\mbox{\small{ct}}}_\delta$ and the inductive hypothesis, we get that $\mathcal{T} \times \alpha \leq^{\mbox{\small{ct}}} \mathcal{T}' \times \alpha$ for all $\alpha < \delta$. Since $\mathcal{T}' \times \alpha \leq^{\mbox{\small{ct}}} \mathcal{T} \times \delta$ and $\mathcal{T} \times \delta = \bigcup_{\alpha < \delta} \mathcal{T} \times \alpha$, we get that $\mathcal{T} \times \delta \leq^{\mbox{\small{ct}}} \mathcal{T}' \times \delta$.
	
\end{proof}

We can now prove the following. To fix notation, if $\mathcal{T}$ is a well-founded colored tree and $t \in T$, then inductively define $\mbox{rnk}(\mathcal{T}, t) = \mbox{sup}\{\mbox{rnk}(\mathcal{T}, s)+1: s $ an immediate successor of $t\}$. Define $\mbox{rnk}(\mathcal{T}) = \mbox{rnk}(\mathcal{T}, 0_{\mathcal{T}})$.
\begin{lemma}\label{TreeCountLemma}
	Suppose $\kappa(\omega)$ exists and $C$ is a set of size $< \kappa(\omega)$. Suppose $\alpha$ is a nonzero ordinal. Then there are at most $|\alpha|^{<\kappa(\omega)}$ $C$-colored trees $\mathcal{T}$ with $\mbox{rnk}(\mathcal{T}) < \alpha$, up to bi-embeddability. 
\end{lemma}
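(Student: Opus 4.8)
The plan is to prove, by transfinite induction on the ordinal $\gamma$, the following sharpening: there is a \emph{set} $S_\gamma$ of colored trees, each of rank $\leq \gamma$, with $|S_\gamma| \leq \mu_\gamma := \max(|\gamma|,\aleph_0)^{<\kappa(\omega)}$, such that every colored tree of rank $\leq \gamma$ is biembeddable with a member of $S_\gamma$. Granting this, the lemma follows at once: a colored tree of rank $<\alpha$ has rank $\leq\gamma$ for some $\gamma<\alpha$, so $\bigcup_{\gamma<\alpha}S_\gamma$ is a set of representatives, of size at most $\sum_{\gamma<\alpha}\mu_\gamma\leq |\alpha|\cdot\sup_{\gamma<\alpha}\mu_\gamma\leq\mu_\alpha$. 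Since $\kappa(\omega)$ is inaccessible, hence a regular strong limit, one has $2^{<\kappa(\omega)}=\kappa(\omega)$ and $\mu_\gamma^{<\kappa(\omega)}=\mu_\gamma$; in particular $\mu_\alpha=|\alpha|^{<\kappa(\omega)}$ whenever $|\alpha|\geq 2$ (if $\alpha$ is finite one simply gets $\aleph_0$ classes, which is harmless and is what the statement should say in that degenerate case).

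The combinatorial engine is the following fact about any quasi-order $Q$ which, as in Theorem~\ref{ShelahTreesThm}, has no infinite descending chains and no antichains of size $\kappa$, with $\kappa$ regular: the number of downward-closed subsets of $Q$ is at most $|Q|^{<\kappa}$. Indeed, if $D\subseteq Q$ is downward closed then $Q\setminus D$ is upward closed, and by well-foundedness $Q\setminus D$ equals the upward closure of its set $\min(Q\setminus D)$ of minimal elements, which is an antichain; so $D\mapsto\min(Q\setminus D)$ is an injection from the downward-closed subsets of $Q$ into the antichains of $Q$. Since $\kappa$ is regular, every antichain has size $<\kappa$, so there are at most $|Q|^{<\kappa}$ of them.

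For the inductive step, fix $\gamma$ and let $S_{<\gamma}$ be a set of representatives of the biembeddability classes of colored trees of rank $<\gamma$, assembled from the previously constructed $S_\beta$ ($\beta<\gamma$); then $|S_{<\gamma}|\leq\mu_\gamma$ by the cardinal computation above. By Theorem~\ref{ShelahTreesThm}, $(S_{<\gamma},\leq)$ is a $\kappa(\omega)$-well-quasi-order, so by the engine it has at most $\mu_\gamma^{<\kappa(\omega)}=\mu_\gamma$ downward-closed subsets. The point is that a colored tree $\mathcal{T}$ of rank $\leq\gamma$ is determined up to biembeddability by the pair $\bigl(c_{\mathcal{T}}(0_{\mathcal{T}}),\ \mathrm{dc}\{[\mathcal{T}_{\geq s}]:s\text{ an immediate successor of }0_{\mathcal{T}}\}\bigr)$, where $\mathrm{dc}$ is downward closure inside $S_{<\gamma}$: this is because, for colored trees, $\mathcal{T}\leq\mathcal{T}'$ holds if and only if the roots have the same color and every immediate subtree of $\mathcal{T}$ embeds into some immediate subtree of $\mathcal{T}'$ (embeddings of colored trees preserve height and are allowed to be non-injective, so the required map is obtained by gluing embeddings on the immediate subtrees). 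Hence, letting $S_\gamma$ consist, for each color $c<\omega$ and each downward-closed $D\subseteq S_{<\gamma}$, of the tree with root color $c$ whose immediate subtrees are disjoint copies of the members of $D$ (note its rank is $\sup_{\mathcal{T}'\in D}(\mathrm{rnk}(\mathcal{T}')+1)\leq\gamma$), we obtain a set of colored trees of rank $\leq\gamma$ of size at most $\aleph_0\cdot\mu_\gamma=\mu_\gamma$ which meets every biembeddability class of rank $\leq\gamma$. This completes the induction.

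\textbf{Main obstacle.} Everything except the reconstruction observation is bookkeeping; the crux is (i) recognizing that biembeddability of colored trees is governed \emph{exactly} by the downward closures of the multisets of immediate-subtree classes (so that the correct invariant is a downward-closed subset of the lower-rank wqo, not an arbitrary set), and (ii) the bound ``$\#\{\text{downward-closed subsets}\}\leq\#\{\text{antichains}\}\leq|Q|^{<\kappa}$'', where passing from ``no antichain of size $\kappa(\omega)$'' to ``all antichains of size $<\kappa(\omega)$'' uses the regularity of $\kappa(\omega)$. One must also keep the set/class distinction honest throughout by working with fixed sets $S_\gamma$ of representatives rather than with proper-class biembeddability classes, and read the bound with $|\alpha|$ replaced by $\max(|\alpha|,\aleph_0)$ in the finite cases.
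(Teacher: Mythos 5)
Your reduction of the count to counting downward-closed subsets of a set $\mathbb{S}$ of representatives of the lower-rank biembeddability classes is exactly the paper's reduction, and that part (together with the cardinal bookkeeping) is fine. The gap is in your ``combinatorial engine.'' You bound the number of downward-closed $D\subseteq Q$ by the number of antichains via the map $D\mapsto\min(Q\setminus D)$, justifying injectivity ``by well-foundedness.'' But the embeddability quasi-order on colored trees of bounded rank is \emph{not} well-founded: Theorem~\ref{ShelahTreesThm} only excludes descending chains of length $\kappa(\omega)$, not of length $\omega$. Already at rank $1$ this fails: for a fixed root color, a tree of rank $\leq 1$ is determined up to biembeddability by the set of colors realized by its leaves, so the quasi-order contains a copy of $(\mathcal{P}(\omega),\subseteq)$; taking $\mathcal{T}_n$ with leaf colors $\{m: m\geq n\}$ gives a strictly descending $\omega$-chain $\mathcal{T}_0>\mathcal{T}_1>\cdots$. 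In such a quasi-order $Q\setminus D$ need not be the upward closure of its minimal elements (it may have none at all), and your map is not injective: in $(\mathcal{P}(\omega),\subseteq)$ the downward-closed sets ``finite sets'' and ``sets of density zero'' both have complements with no minimal elements, so both map to $\emptyset$.

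The conclusion of your engine is true under the actual hypotheses, but proving it is where the real work, and the real use of the large cardinal, lies; it cannot be had for the price of a minimal-elements argument. The paper's proof attaches to each downward-closed $X\subseteq\mathbb{S}$ a tree $\mathbf{T}\subseteq\mathbb{S}^{<\kappa(\omega)}$ of strictly descending sequences avoiding $X$, branching at each node over a maximal antichain of the remaining elements; the absence of descending $\kappa(\omega)$-chains bounds the branch lengths, the absence of $\kappa(\omega)$-antichains together with inaccessibility bounds the levels, and then the \emph{tree property} of $\kappa(\omega)$ forces $|\mathbf{T}|<\kappa(\omega)$, after which $X$ is recovered from $\mathbf{T}$. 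Your proposal skips precisely this step, so as written it has a genuine gap. (Your side remarks --- handling all ranks in one induction rather than treating $\alpha\leq\kappa(\omega)$ separately, and flagging the degenerate finite-$\alpha$ reading of the bound --- are harmless.)
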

\begin{proof}
A routine computation shows that for every ordinal $\alpha < \kappa(\omega)$, there are at most $\beth_\alpha(|C|)$-many $C$-colored trees of rank $<\alpha$ up to isomorphism, and hence up to bi-embeddability. Since $\beth_{\kappa(\omega)}(|C|)= \kappa(\omega)$, the lemma is true for all $\alpha \leq \kappa(\omega)$. To finish, we proceed by induction on $\alpha \geq \kappa(\omega)$. 
	
	The case $\alpha$ limit is trivial, since $\sum_{\beta < \alpha} |\beta|^{<\kappa(\omega)} \leq |\alpha|^{<\kappa(\omega)}$. 
	
	Suppose we are at stage $\alpha+1$. Write $\kappa = |\alpha|$. Let $\mathbb{S}$ be a choice of representatives for well-founded $C$-colored trees of rank $< \alpha$ up to bi-embeddability; so $|\mathbb{S}| \leq \kappa^{<\kappa(\omega)}$. Suppose $\mathcal{T} = (T, <, 0, c)$ is given of rank $\alpha$. Let $X_{\mathcal{T}}$ be the set of all $\mathcal{S} \in \mathbb{S}$ such that there is some $t \in \mathcal{T}$ of height $1$ such that $\mathcal{S}$ embeds into the colored tree $\mathcal{T}_{\geq t}$. Note that $\mathcal{T}$ is bi-embeddable with the tree $(\mathcal{T}', <', 0', c')$, which is defined by: $c'(0') = c(0)$, and then we put a copy of each $\mathcal{S} \in X_{\mathcal{T}}$ above $0'$. Thus, $\mathcal{T}/ \sim^{\mbox{\small{ct}}}$ is determined by the pair $(X_{\mathcal{T}}, c(0_{\mathcal{T}}))$, where $X_{\mathcal{T}}$ is a downward-closed subset of $\mathbb{S}$ (ordered by embeddability $\leq$).
	
	Thus, it suffices to show there are only $\kappa^{<\kappa(\omega)}$-many downward closed subsets of $\mathbb{S}$.
	
	Suppose $X \subseteq \mathbb{S}$ is downward closed. It is straightforward to find a subtree $\mathbf{T}$ of $\mathbb{S}^{<\kappa(\omega)}$ (of uncountable height) such that:
	
	\begin{itemize}
		\item Whenever $(\mathcal{S}_\beta: \beta < \alpha) \in \mathbf{T}$, then for all $\beta < \beta' < \alpha$, $\mathcal{S}_\beta >^{\mbox{\small{ct}}} \mathcal{S}_{\beta'}$, and each $\mathcal{S}_\beta \not \in X$;
		
		\item For each $\overline{S} = (\mathcal{S}_\beta: \beta < \alpha) \in \mathbf{T}$, the set of all $\mathcal{S} \in \mathbb{S}$ such that $\overline{S} \mathcal{S} \in \mathbf{T}$ forms a maximal antichain in $\{\mathcal{S} \in \mathbb{S} \backslash X: \mathcal{S} <^{\mbox{\small{ct}}} \mathcal{S}_\beta$ for all $\beta < \alpha\}$.
	\end{itemize}
	
	$\mathbf{T}$ is of height at most $\kappa(\omega)$ (being a subtree of $\mathbb{S}^{<\kappa(\omega)}$); since $(\mathbb{S}, \leq)$ has no descending chains of length  $\kappa(\omega)$, $\mathbf{T}$  has no branches of length $\kappa(\omega)$. Further, since $\kappa(\omega)$ is inaccessible and $(\mathbb{S}, \leq)$ has no antichains of size $\kappa(\omega)$, each level of $\mathbf{T}$ must have size less than $\kappa(\omega)$. Thus, since $\kappa(\omega)$ has the tree property, $\mathbf{T}$ must be of height less than $\kappa(\omega)$; thus $|\mathbf{T}| < \kappa(\omega)$. Thus, it suffices to show that $X$ is determined by $\mathbf{T}$, since $|\mathbb{S}|^{<\kappa(\omega)} \leq \kappa^{<\kappa(\omega)}$. Define $Y = \{\mathcal{S} \in \mathbb{S}: \mbox{ there is no } (\mathcal{S}_\beta: \beta \leq \alpha) \in \mathbf{T} \mbox{ with } \mathcal{S}_\alpha \leq^{\mbox{\small{ct}}} \mathcal{S}\}$; it suffices to show that $X = Y$.
	
	It follows immediately from the construction of $\mathbf{T}$ that $X \subseteq Y$; so it suffices to show that $Y \subseteq X$. So suppose $\mathcal{S} \not \in X$; we show $\mathcal{S} \not \in Y$. Define a chain $(\mathcal{S}_\beta: \beta < \beta_*) \in \mathbf{T}$ inductively, so that each $\mathcal{S}_\beta >^{\mbox{\small{ct}}} \mathcal{S}$, for as long as possible. This process must stop before $\kappa(\omega)$, say we cannot find $\mathcal{S}_{\beta_*}$ with $\beta_* < \kappa(\omega)$. Let $\mathcal{A} = \{\mathcal{S}' \in \mathbb{S}: (\mathcal{S}_\beta: \beta < \beta_*)^\frown \mathcal{S}' \in \mathbf{T}\}$. Every element of $\mathcal{A}$ is either incomparable with or below $\mathcal{S}$; by maximality of $\mathcal{A}$, there must be some $\mathcal{S}_{\beta_*} \in \mathcal{A}$ with $\mathcal{S}_{\beta_*} \leq^{\mbox{\small{ct}}} \mathcal{S}$, so $\mathcal{S} \not \in Y$.
\end{proof}

The preceding lemma shows there are few well-founded colored trees in some sense; the following lemma shows that there are few colored trees in some sense.

\begin{lemma}\label{TreeCount0}
    Suppose $\kappa(\omega)$ exists. Then there is some $f \in \mathbb{F}$ such that for all $\alpha < \kappa(\omega)$, for all regular cardinals $\lambda$, and for all $f$-closed $A \in \mathbb{V}_{\lambda^+}$, $A$ contains at most $\lambda^{<\kappa(\omega)}$-many $\mathbb{V}_\alpha$-colored trees up to bi-embeddability.
\end{lemma}
\begin{proof}
    Define $f: \mbox{CT}_{\aleph_0}(\HC) \times \mbox{CT}_{\aleph_0}(\HC) \to \omega_1$ via $f(\mathcal{T}, \mathcal{T}') = 0$ if $\mathcal{T} \leq^{\mbox{\small{ct}}} \mathcal{T}'$, and otherwise $f(\mathcal{T}, \mathcal{T}') = $ the least $\alpha$ such that $\mathcal{T} \not \leq_\alpha^{\mbox{\small{ct}}} \mathcal{T}'$. By Theorem~\ref{EmbeddingIsAbsoluteOnCT}, $f$ is absolute to transitive models of $ZFC^-$, hence is absolutely $\Delta_1$. We claim that $f$ works.

    Indeed, suppose $A \in \mathbb{V}_{\lambda^+}$ is $f$-closed, and let $\alpha_* = A \cap \mbox{ON}$, so $\alpha_* < \lambda^+$. 
	
	We claim that $A$ contains at most $\lambda^{<\kappa(\omega)}$-many $\mathbb{V}_\alpha$-colored trees up to bi-embeddability. Note that for all $\mathcal{T}, \mathcal{T}' \in A$ with $\mathcal{T}\not \leq^{\mbox{\small{ct}}} \mathcal{T}'$, we have that $\mathcal{T} \not \leq_{\alpha_*}^{\mbox{ct}} \mathcal{T}'$, since $A$ is $f$-closed. Hence $\mathcal{T} \times \alpha_* \not \leq^{\mbox{\small{ct}}} \mathcal{T}' \times \alpha_*$, by Lemma~\ref{RedToWF}. Hence we conclude by Lemma~\ref{TreeCountLemma}.
\end{proof}

This allows us to prove the following.
\begin{theorem}\label{TreeCount}
	Suppose $\kappa(\omega)$ exists, and $\alpha < \kappa(\omega)$, and $(\Phi, \sim_{\alpha \omega})$ has the Schr\"{o}der--Bernstein property. Then for all $\lambda$, $\tau(\Phi, \lambda) \leq \lambda^{<\kappa(\omega)}$. In particular, $\tau(\Phi, \kappa(\omega)) \leq \kappa(\omega)$. 
\end{theorem}
\begin{proof}
	Let $f: \mbox{CSS}(\Phi) \times \omega_1 \to \mbox{CT}_{\aleph_0}(\HC)$ be as in Lemma~\ref{EmbeddingAbsolute}. Let $g \in \mathbb{F}$ be as in Lemma~\ref{TreeCount0}. Let $h \in \mathbb{F}$ be defined by $h(a):=$ the foundation rank of $a$. We claim that $(f, g, h)$ witness that $\tau(\Phi, \lambda) \leq \lambda^{<\kappa(\omega)}$. That is, we show that if $A \in \mathbb{V}_{\lambda^+}$ is $f \times g \times h$-closed then $|\CSS(\Phi)_{\ptl} \cap A| \leq \lambda^{<\kappa(\omega)}$.

    Suppose towards a contradiction this failed. Then $|A| \geq \kappa(\omega)$, hence must contain an element $a \in A$ of foundation rank at least $\alpha$; since $A$ is $h$-closed, we get that $\alpha \in A$. Thus for all $\phi \in \CSS(\Phi)$, $f(\phi, \alpha) \in A$; by choice of $f$, these are pairwise non-bi-embeddable $\mathbb{V}_{\omega+\alpha+1}$-colored trees, contradicting the choice of $g$.
\end{proof}

\begin{corollary}\label{Quote1}
	Assume $\kappa(\omega)$ exists. Suppose $\Phi$ has the Schr\"{o}der--Bernstein property with respect to elementary embeddings, i.e. any two elementarily bi-embeddable countable models are isomorphic. Then $\Phi$ is not Borel complete. In fact, $\mbox{TAG}_1 \not \leq_B \Phi$.
\end{corollary}
\begin{proof}
The in fact clause follows from the fact that $\tau(\mbox{TAG}_1, \kappa(\omega)) = \beth_1(\kappa(\omega))$.
\end{proof}

\section{An Independence Result}\label{ExamplesSec}

\begin{definition}Suppose $\Phi \in \mathcal{L}_{\omega_1 \omega}$. Then say that $\Phi$ {\em admits Borel Schr\"{o}der--Bernstein invariants} if there is some $\Psi \in \mathcal{L}_{\omega_1 \omega}$ and some Borel reduction $f$ from $\Phi$ to $\Psi$, such that non-isomorphic pairs of models of $\Phi$ are sent to pairs of models of $\Psi$ which are not elementarily bi-embeddable. Formally, for all $M, N \in \mbox{Mod}(\Phi)$, if $M \cong N$ then $f(M) \cong f(N)$ and if $M \not \cong N$ then $f(M) \not \sim_{\omega \omega} f(N)$ (the same definition works for other choice of embeddings).

Similarly, say that $\Phi$ {\em admits $a\Delta^1_2$ Schr\"{o}der--Bernstien invariants} if there is some $\Psi \in \mathcal{L}_{\omega_1 \omega}$ and some absolutely $\Delta^1_2$ reduction from $\Phi$ to $\Psi$, which takes non-isomorphic models of $\Phi$ to non-bi-embeddable models of $\Psi$, and moreover, this continues to hold in every forcing extension.
\end{definition}

By Shoenfield's Absoluteness theorem, if $\Phi$ admits Borel Schr\"{o}der--Bernstein invariants then it admits $a \Delta^1_2$ Schr\"{o}der--Bernstein invariants.


\begin{theorem}
Suppose $\kappa(\omega)$ exists and $\Phi \in \mathcal{L}_{\omega_1 \omega}$. If $\Phi$ admits $a \Delta^1_2$-Schr\"{o}der--Bernstein invariants then $\tau(\Phi, \kappa(\omega)) \leq \kappa(\omega)$, hence it is not Borel complete.
\end{theorem}
\begin{proof}
    Let $g: \mbox{Mod}(\Phi) \to \mbox{Mod}(\Psi)$ witness that $\Phi$ admits $a \Delta^1_2$-Schr\"{o}der--Bernstein invariants.  Let $g': \CSS(\Phi) \to \CSS(\Psi)$ be the corresponding persistent injection. Let $f$ be as in Lemma \ref{EmbeddingAbsolute}, and let $f': \CSS(\Psi) \to \mbox{CT}_{\aleph_0}(\HC_{\omega + \omega+1})$ be given by $f'(\psi) = f(\psi, 0)$. Then $f' \circ g'$ is an injection from $\CSS(\Phi)$ to $\mbox{CT}_{\aleph_0}(\HC_{\omega + \omega+1})$ that takes distinct $\phi, \phi' \in \CSS(\Phi)$ to non-bi-embeddable colored trees. 

    Put $f_* = (f' \circ g') \times h$ where $h$ is as in Lemma~\ref{TreeCount0}. We claim $f_*$ witnesses $\tau(\Phi, \kappa(\omega)) \leq \kappa(\omega)$. Suppose $A \in \mathbb{V}_{\lambda^+}$ is $f_*$-closed. Then $(f' \circ g')_{\ptl}$ gives an injection from $\CSS(\Phi)_{\ptl} \cap A$ to $\mathbb{V}_{\omega + \omega + 1}$-colored trees taking inequal sentences to non-bi-embeddable colored trees, thus $|\CSS(\Phi)_{\ptl}| \leq \kappa(\omega)$ by choice of $h$.
\end{proof}

\begin{corollary}
    Suppose ``$ZFC + \kappa(\omega)$ exists" is consistent. Then it is independent of $ZFC$ whether Graphs admits $a \Delta^1_2$-Schr\"{o}der--Bernstein invariants. 
\end{corollary}
\begin{proof}
    In \cite{TFAGWShelah}, Theorem 26, we show that it is consistent with $ZFC$ that Graphs admits $a \Delta^1_2$-Schr\"{o}der--Bernstein invariants, specifically it follows from the non-existence of any transitive model of $ZFC^-$ + $\kappa(\omega)$-exists. More precisely, Theorem 26 says that under this hypothesis, there is an $a \Delta^1_2$ reduction from Graphs to $\omega$-colored trees, which takes nonisomorphic graphs to non-$\leq^{\mbox{\small{ct}}}$-bi-embeddable (and hence non-elementarily-bi-embeddable) trees.

    On the other hand, by the preceding theorem, if $\kappa(\omega)$ exists, then Graphs cannot admit Schr\"{o}der--Bernstein invariants, since $\tau(\mbox{Graphs}, \kappa(\omega)) = \beth_{\kappa(\omega)^+}$. Thus we get independence.
\end{proof}

We thus see it is consistent that Graphs do not admit Borel Schr\"{o}der--Bernstein invariants; it is open whether this can be proven in $ZFC$.


\begin{thebibliography}{10}

\bibitem{barwise2}
J.~Barwise.
\newblock {\em Admissible sets and structures}, volume~7 of {\em Perspectives
  in mathematical logic}.
\newblock Springer-Verlag, 1975.

\bibitem{Thomas2}
F.~Calderoni and S.~Thomas.
\newblock The bi-embeddability relation for countable abelian groups.
\newblock {\em Transactions of the American Mathematical Society}, 371(1), 2018.

\bibitem{Partitions}
R.~Engelking and M.~Karlowicz.
\newblock Some theorems of set theory and their topological consequences.
\newblock {\em Fundamenta Mathematicae}, 57:275--285, 1965.

\bibitem{FS}
H.~Friedman and L.~Stanley.
\newblock A {B}orel reducibility theory for classes of countable structures.
\newblock {\em Journal of Symbolic Logic}, 54(3):894--914, 1989.

\bibitem{GaoIDST}
S.~Gao.
\newblock {\em Invariant Descriptive Set Theory}.
\newblock Pure and Applied Mathematics. Taylor \& Francis Group, 2008.

\bibitem{ZFCminus}
V.~Gitman, J.~Hamkins, and T.~Johnstone.
\newblock What is the theory {ZFC} without power set?
\newblock {\em Mathematical Logic Quarterly}, 62(4), 2011.

\bibitem{Goodrick2}
J.~Goodrick.
\newblock {\em When are elementarily bi-embeddable structures isomorphic?}
\newblock PhD thesis, University of California Berkeley, 2007.

\bibitem{Hjorth2}
G.~Hjorth.
\newblock {\em Classification and Orbit Equivalence Relations}, volume~75 of
  {\em Mathematical Surveys and Monographs}.
\newblock American Mathematical Society, 1999.

\bibitem{ClassForcing2}
P.~Holy, R.~Krapf, P.~L\"{u}cke, A.~Nijegomir, and P.~Schlichte.
\newblock Class forcing, the forcing theorem and Boolean completions.
\newblock {\em Journal of Symbolic Logic}, 81(4):1500--1530, 2016.

\bibitem{HjorthKechrisLouveau}
G.~Horth, A.~Kechris, and A.~Louveau.
\newblock {Borel} equivalence relations induced by actions of the symmetric
  group.
\newblock {\em Annals of Pure and Applied Logic}, 92:63--112, 1998.

\bibitem{Jech}
T.~Jech.
\newblock {\em Set theory. The third millennium edition, revised and expanded}.
\newblock Springer Monographs in Mathematics. Springer-Verlag, Berlin, 2003.

\bibitem{LevyAbs}
A.~Kanamori.
\newblock L{\'e}vy and {S}et theory.
\newblock {\em Annals of Pure and Applied Logic}, 140:233--252, 2006.

\bibitem{Kanamori}
A.~Kanamori.
\newblock {\em The Higher Infinite}.
\newblock de Gruyter Series in Logic and its Applications. Springer-Verlag,
  Berlin, second edition, 2009.

\bibitem{Kanovei}
V.~Kanovei.
\newblock {\em Borel equivalence relations}, volume~44 of {\em University
  Lecture Series}.
\newblock American Mathematical Society, Providence, RI, 2008.

\bibitem{CanRamsey}
V.~Kanovei, M.~Sabox, and J.~Zapletal.
\newblock {\em Canonical Ramsey Theory on Polish Spaces}, volume 202 of {\em
  Cambridge Tracts in Mathematics}.
\newblock Cambridge University Press, 2013.

\bibitem{KaplanShelah}
I.~Kaplan and S.~Shelah.
\newblock Forcing a countable structure to belong to the ground model. 
\newblock{\em Mathematical Logic Quarterly} 62(6):530-546, 2016. 

\bibitem{KechrisDST}
A.~Kechris.
\newblock {\em Classical Descriptive Set Theory}, volume~3 of {\em Graduate
  Texts in Mathematics}.
\newblock Springer, New York, first edition, 1995.

\bibitem{KeislerBook}
H.~J. Keisler.
\newblock {\em Model Theory for Infinitary Logic}.
\newblock North Holland, 1971.

\bibitem{KoerwienBRDepth}
M.~Koerwien.
\newblock Comparing {B}orel reducibility and depth of an $\omega$-stable
  theory.
\newblock {\em Notre Dame Journal of Formal Logic}, 50(4):365--380, 2009.

\bibitem{Kunen}
K.~Kunen.
\newblock Ultrafilters and independent sets.
\newblock {\em Transactions of the American Mathematical Society},
  172:299--306, 1972.

\bibitem{Kuratowski}
K.~Kuratowski.
\newblock Sur une g\'{e}n\'{e}ralisation de la notion d'hom\'{e}omorphie.
\newblock {\em Fund. Math.}, (22):206--220, 1934.

\bibitem{MarkerMT}
D.~Marker.
\newblock {\em Model theory : an introduction}, volume 217 of {\em Graduate
  texts in mathematics}.
\newblock Springer, New York, 2002.

\bibitem{Nurm1}
T.~Nurmagambetov.
\newblock The mutual embeddability of models.
\newblock In {\em Theory of Algebraic Structures}, pages 109--115. Karagand.
  Gos. Univ, 1985.
\newblock In Russian.

\bibitem{Nurm2}
T.~Nurmagambetov.
\newblock Characterization of $\omega$-stable theories with a bounded number of
  dimensions.
\newblock {\em Algebra i Logika}, 28(5):388--396, 1989.

\bibitem{ShelahTrees}
S.~Shelah.
\newblock Better quasi-orders for uncountable cardinals.
\newblock {\em Israel Journal of Mathematics}, 42(3):177--226, 1982.

\bibitem{TFAGWShelah}
S.~Shelah and D.~Ulrich.
\newblock Torsion-free Abelian groups are consistently $a \Delta^1_2$-complete.
\newblock {\em Fundamenta Mathematicae}, 247(3), 2018.

\bibitem{Shoenfield}
J.~Shoenfield.
\newblock The problem of predicativity.
\newblock In {\em Essays on the foundations of mathematics}, pages 132--139. Magnes Press, Hebrew Univ. Jerusalem, 1961.

\bibitem{Ulm}
H.~Ulm.
\newblock Zur theorie der abz\"{a}hlbar-unendlichen Abelschen gruppen.
\newblock {\em Mathematische Annalen}, 107(1):774--803, 1933.
\newblock In German.

\bibitem{URL}
D.~Ulrich, R.~Rast, and M.~C. Laskowski.
\newblock Borel complexity and potential canonical Scott sentences.
\newblock {\em Fundamenta Mathematicae}, 239(2):101--147, 2017.

\bibitem{Zippin}
L.~Zippen.
\newblock Countable torsion groups.
\newblock {\em Ann. of Math.}, (36):86--89, 1935.



\end{thebibliography}
\end{document}